\newtheorem{thm}{Theorem}[section]
\newtheorem{cor}[thm]{Corollary}
\newtheorem{lem}[thm]{Lemma}
\newtheorem{prop}[thm]{Proposition}
\theoremstyle{definition}
\newtheorem{defin}[thm]{Definition}
\newtheorem{rem}[thm]{Remark}
\newtheorem{exa}[thm]{Example}
\numberwithin{equation}{section}
\newcommand{\subjclass}[2][1991]{%
  \let\@oldtitle\@title%
  \gdef\@title{\@oldtitle\footnotetext{#1 \emph{Mathematics subject classification.} #2}}%
}
\newcommand{\keywords}[1]{%
  \let\@@oldtitle\@title%
  \gdef\@title{\@@oldtitle\footnotetext{\emph{Key words and phrases.} #1.}}%
}
\newcommand{\GL}{\operatorname{GL}} 
\newcommand{\gl}{\operatorname{\mathfrak{gl}}}
\newcommand{\g}{\mathfrak{g}} 
\newcommand{\h}{\mathfrak{h}} 
\newcommand{\Lie}{\operatorname{Lie}}
\newcommand{\End}{\operatorname{End}}
\newcommand{\Aut}{\operatorname{Aut}}
\newcommand{\Hom}{\operatorname{Hom}}
\newcommand{\Ker}{\operatorname{Ker}}
\newcommand{\range}{\operatorname{Im}}
\newcommand{\tr}{\operatorname{tr}}
\newcommand{\sgn}{\operatorname{sgn}}
\newcommand{\unit}{\mathrm{Id}}
\newcommand{\pr}{\operatorname{pr}}
\newcommand{\spec}{\operatorname{Spec}} 
\newcommand{\res}{\operatorname*{res}}
\newcommand{\Z}{\mathbb{Z}}
\newcommand{\C}{\mathbb{C}}
\newcommand{\bM}{\operatorname{\mathbf{M}}}
\newcommand{\bA}{\mathbf{A}} 
\newcommand{\bd}{\mathbf{d}}
\newcommand{\bD}{\mathbf{D}} 
\newcommand{\bC}{\mathbf{C}} 
\newcommand{\bB}{\mathbf{B}}
\newcommand{\bV}{\mathbf{V}}
\newcommand{\bv}{\mathbf{v}}
\newcommand{\bw}{\mathbf{w}}
\newcommand{\sQ}{\mathsf{Q}}
\newcommand{\ov}{\overline}
\newcommand{\fp}[1]{\ensuremath{[\![#1]\!]} } 
\newcommand{\fl}[1]{\ensuremath{(\!(#1)\!)} } 
\newcommand{\bbV}{\mathbb{V}}
\newcommand{\bbW}{\mathbb{W}}
\newcommand{\bmlam}{\bm{\lambda}}
\newcommand{\bmkap}{\bm{\kappa}}
\newcommand{\relmiddle}[1]{\mathrel{}\middle#1\mathrel{}}
\newcommand{\diag}{\operatorname{diag}}
\newcommand{\calO}{\mathcal{O}}
\newcommand{\ccalO}{\check{\mathcal{O}}}
\newcommand{\leg}{\mathrm{leg}}
\newcommand{\QS}{\mathcal{S}}
\newcommand{\calN}{\mathcal{N}}
\newcommand{\bfe}{\mathbf{e}}
\newcommand{\Stab}{\operatorname{Stab}}
\title{Symmetries of Quiver Schemes}
\author{Ryo Terada\thanks{Corresponding author}\ \thanks{Department of Mathematics, Faculty of Science Division I, Tokyo University of Science, 1-3
Kagurazaka, Shinjuku-ku, Tokyo 162-8601, Japan; \texttt{1123705@ed.tus.ac.jp}} 
\ and Daisuke Yamakawa\thanks{Department of Mathematics, Faculty of Science Division I, Tokyo University of Science, 1-3
Kagurazaka, Shinjuku-ku, Tokyo 162-8601, Japan; \texttt{yamakawa@rs.tus.ac.jp}}}
\subjclass[2020]{Primary 53D20; Secondary 53D30, 16G20, 20F55, 17B67}
\begin{document}
\mathtoolsset{showonlyrefs=true}
\maketitle


\begin{abstract}
   We introduce reflection functors on quiver schemes in the sense of Hausel--Wong--Wyss, 
   generalizing those on quiver varieties. 
   Also we construct some isomorphisms between quiver schemes whose underlying quivers are different.
\end{abstract}

\subsection*{Keywords}
GLS preprojective algebra, Quiver scheme, Weyl group,
Reflection functor
\section{Introduction}

Let $\sQ$ be a finite quiver with no edge-loops 
and $\bd=(d_i)_{i \in I}$ be a collection 
of positive integers indexed by the vertex set $I$. 
We think of each $d_i$ as the ``multiplicity'' of $i$ 
and call the pair $(\sQ,\bd)$ a \emph{quiver with multiplicities}.

In \cite{YD1}, the second named author associated to 
$\bmlam \in \bigoplus_{i \in I} \C[\epsilon_i]/(\epsilon_i^{d_i})$, $\bv \in \Z_{\geq 0}^I$ 
a complex symplectic manifold
$\calN^{\mathrm{s}}_{\sQ,\bd}(\bmlam,\bv)$, 
called the quiver variety with multiplicities%
\footnote{In \cite{YD1} the parameter $\bmlam =(\lambda_i)$ 
is supposed to be an element of 
$\bigoplus_{i \in I} \epsilon_i^{-d_i}\C\fp{\epsilon_i}/\C\fp{\epsilon_i}$, 
but it may be regarded as an element of 
$\bigoplus_{i \in I} \C[\epsilon_i]/(\epsilon_i^{d_i})$ 
by multiplying each $\lambda_i$ by $\epsilon_i^{d_i}$.}. 
In the multiplicity-free case ($d_i=1$ for all $i$), 
it coincides with the quiver variety 
$\mathfrak{M}^\mathrm{reg}_\zeta(\bv,\bw)$
in the sense of Nakajima~\cite{NH1} with $\bw=0$, $\zeta = (0,\bmlam)$. 
One of the main theorems in \cite{YD1} is the following: 
Let $\bA =(a_{ij})_{i,j \in I}$ be the adjacency matrix of the quiver $\sQ$ 
and define $\bC' = 2\,\unit -\bA \diag (d_i)_{i \in I}$,
which is a symmetrizable (possibly non-symmetric) generalized Cartan matrix. 
Then, in roughly speaking, the quiver varieties with multiplicities 
$\calN^{\mathrm{s}}_{\sQ,\bd}(\bmlam,\bv)$ for various $\bmlam,\bv$ 
(we say they are \emph{of type} $\bC'$) 
admit symmetry of the associated Weyl group, 
which coincides with the Weyl group symmetry of quiver varieties 
generated by reflection functors~\cite{NH03} in the multiplicity-free case. 
Note that not all symmetrizable generalized Cartan matrices 
occur as types of quiver varieties with multiplicities;
for instance, affine Cartan matrices of type $C$ occur as particular examples, 
but not of type $B$. 
It was thus an interesting problem to find another way to 
generalize Nakajima's quiver varieties so that all symmetrizable generalized 
Cartan matrices occur as their types  
because one may expect that the geometry of such generalization has 
rich information about 
the associated Kac--Moody algebras 
and their quantum enveloping algebras like as quiver varieties 
(see e.g.\ \cite{NH98,NH04}). 

The paper~\cite{GLS} by Geiss--Leclerc--Schr\"oer could be 
expected to give a nice approach to the problem.
They associated an algebra $\Pi$ 
to \emph{every} symmetrizable generalized Cartan matrix $\bC$ with a symmetrizer.  
If $\bC$ is symmetric (with the trivial symmetrizer), 
then $\Pi$ coincides with the usual preprojective algebra of type $\bC$.
Recall that Nakajima's quiver variety $\mathfrak{M}^\mathrm{reg}_0(\bv,0)$ 
parametrizes isomorphism classes of irreducible representations 
of a preprojective algebra with dimension vector $\bv$. 
Thus their work leads to generalization of Nakajima's quiver varieties 
to the non-symmetric case.

Based on the work of Geiss--Leclerc--Schr\"oer, 
Hausel--Wong--Wyss~\cite{HWW} modified 
the definition of $\calN^{\mathrm{s}}_{\sQ,\bd}(\bmlam,\bv)$  
to introduce an affine scheme, called the \emph{quiver scheme}, 
which we denote by $\QS_{\sQ,\bd}(\bmlam,\bv)$ in this paper. 
In the multiplicity-free case, it also coincides with a quiver variety. 
See Remark~\ref{rem:QSvsQV} in the body of this paper 
on the difference between  
quiver schemes and quiver varieties with multiplicities. 
The purpose of this paper is to obtain analogues/generalization of  
the results obtained in \cite{YD1} for quiver schemes.

We briefly explain the main results in this paper. 
Following Geiss--Leclerc--Schr\"oer, 
we associate to each $(\sQ,\bd)$ a symmetrizable generalized Cartan matrix $\bC$   
in a manner different to \cite{YD1}; 
in this manner we may obtain all symmetrizable generalized Cartan matrices.  
We let the associated Weyl group 
act both on $\bigoplus_{i \in I} \C[\epsilon_i]/(\epsilon_i^{d_i})$ and on $\Z^I$. 
Thus for each $j \in I$, 
the $j$-th simple reflection gives rise to linear transformations 
$r_j \colon \bigoplus_{i \in I} \C[\epsilon_i]/(\epsilon_i^{d_i}) \to \bigoplus_{i \in I} \C[\epsilon_i]/(\epsilon^{d_i})$ 
and $s_j \colon \Z^I \to \Z^I$. 
The first main result generalizes reflection functors 
of Lusztig~\cite{Lus}, Maffei~\cite{Maf} and Nakajima~\cite{NH03} .

\begin{thm}[see Section~\ref{section:reflection}]
   Take $j \in I$, $\bmlam =(\lambda_i) \in \bigoplus_{i \in I} \C[\epsilon_i]/(\epsilon_i^{d_i})$, 
   $\bv \in \Z_{\geq 0}^I$ so that 
   $\lambda_j$ is a unit of $\C[\epsilon_j]/(\epsilon_j^{d_j})$. 
   Then there exists an isomorphism of schemes
   \[
      \mathcal{F}_j \colon \QS_{\sQ,\bd}(\bmlam,\bv) \xrightarrow{\sim} 
      \QS_{\sQ,\bd}(r_j(\bmlam),s_j(\bv)).
   \]
\end{thm}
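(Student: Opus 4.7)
The plan is to construct $\mathcal{F}_j$ by a vertex-local modification mirroring the Lusztig--Maffei--Nakajima reflection functor, but carried out inside the category of GLS preprojective representations used to define $\QS_{\sQ,\bd}$. A point of $\QS_{\sQ,\bd}(\bmlam,\bv)$ is (represented by) a tuple consisting, for each vertex $i$, of a rank-$v_i$ free module $V_i$ over $H_i := \C[\epsilon_i]/(\epsilon_i^{d_i})$ together with maps attached to the arrows satisfying the $\epsilon$-compatibility and the GLS preprojective relation with parameter $\bmlam$. I would first isolate, at vertex $j$, the two canonical $H_j$-linear maps $\alpha_j \colon W_j \to V_j$ and $\beta_j \colon V_j \to W_j$, where $W_j$ is the direct sum over arrows incident to $j$ of the relevant bimodule-induced spaces; the preprojective relation at $j$ then reads $\alpha_j \beta_j = \lambda_j \cdot \unit_{V_j}$ in $\End_{H_j}(V_j)$.

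Since $\lambda_j$ is a unit of $H_j$, this identity forces $\beta_j$ to be a split $H_j$-linear monomorphism and $\alpha_j$ a split epimorphism, giving a decomposition $W_j \cong \range(\beta_j) \oplus \Ker(\alpha_j)$ as $H_j$-modules. I would then define the reflected datum by keeping all $V_i$ with $i \neq j$ intact and replacing $V_j$ by $V_j' := \Ker(\alpha_j)$, whose $H_j$-rank equals $(s_j(\bv))_j$ by a direct count against the definition of the Cartan matrix $\bC$ associated to $(\sQ,\bd)$. The new arrow maps at $j$ are built from the inclusion $V_j' \hookrightarrow W_j$ and the projection $W_j \twoheadrightarrow V_j'$ furnished by the splitting, with an appropriate twist by $-\lambda_j^{-1}$ on one side so that the preprojective relation at $j$ becomes $\alpha_j' \beta_j' = -\lambda_j \cdot \unit_{V_j'}$, matching $r_j(\bmlam)_j$. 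At every adjacent vertex $k$, substituting $\alpha_j \beta_j = \lambda_j \cdot \unit$ into the $k$-th relation produces exactly the correction predicted by $r_j$ acting on the $k$-th component, which is the key combinatorial check.

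To upgrade the pointwise recipe to a morphism of schemes I would perform it in families over the open subscheme where $\lambda_j$ is invertible. The splitting is polynomial in the entries of the maps and in $\lambda_j^{-1}$, so $V_j'$ makes sense as a locally free sheaf over the parameter scheme and the new maps are regular. Equivariance under the group acting in the formation of the GIT quotient defining $\QS_{\sQ,\bd}$ is automatic at vertices $i \neq j$, and at $j$ the $\GL(V_j)$-action transports canonically into a $\GL(V_j')$-action through the canonical way $V_j'$ sits in $W_j$. Finally, invertibility follows by verifying that running the same recipe on the reflected datum (with parameter $r_j(\bmlam)$, which again has $j$-th component a unit) returns the original datum up to the natural gauge, so $\mathcal{F}_{r_j(\bmlam)} \circ \mathcal{F}_j = \unit$.

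The hard part, and the step where the GLS multiplicities bite, is proving that $V_j' = \Ker(\alpha_j)$ is genuinely \emph{free} as an $H_j$-module of the predicted rank. Over a field this is automatic; over the truncated polynomial ring $H_j$ it is not, and the unit hypothesis on $\lambda_j$ is precisely what makes $\beta_j$ a direct-summand embedding of $H_j$-modules, so that the complementary summand is projective and hence free over the local ring $H_j$. Unraveling how the $(H_i,H_j)$-bimodule structures attached to arrows incident to $j$ interact with the splitting, so that the ranks accumulate to $(s_j(\bv))_j$ as dictated by $\bC$, is the combinatorial heart of the argument.
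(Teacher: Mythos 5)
Your proposal is correct in outline and rests on the same underlying mechanism as the paper --- isolate the two maps $\alpha_j,\beta_j$ at the vertex $j$, use the invertibility of $\lambda_j$ to split, shift by $\lambda_j$, and check the effect at the neighbours --- but it packages the isomorphism differently. The paper never forms $\Ker(\alpha_j)$: it first proves (Proposition~\ref{lem1}, resting on the coadjoint-orbit description of Proposition~\ref{prp:HWW}) that $\bB\mapsto(-B_{\shortleftarrow j}^{R_{d_j}}B_{j\shortleftarrow}^{R_{d_j}},B_{\neq j})$ is a principal $G_{d_j}(V_j)$-bundle onto $\calO\times\bM^{(j)}_{\sQ,\bd}(\bV)$ with $\calO$ the orbit of $\diag(\lambda_j,\dots,\lambda_j,0,\dots,0)$, and then defines the reflection on the quotient as the scalar shift $\calO\to\calO-\lambda_j\,\unit$; this sidesteps the freeness-of-the-kernel question you rightly flag as the delicate point over $R_{d_j}$, and makes the descent to the affine quotient essentially formal. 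Your explicit kernel construction is the Maffei--Nakajima-style realization of that quotient-level map: since $(\beta_j\alpha_j)^2=-\lambda_j\beta_j\alpha_j$ under the moment-map relation, $-\beta_j\alpha_j$ is $\lambda_j$ times the idempotent onto $\range(\beta_j)$, which is exactly what identifies $\calO$, and your twisted inclusion/projection onto $\Ker(\alpha_j)$ realize the shifted orbit; so the two constructions agree after dividing by $G_{d_j}$. What each buys: yours gives concrete formulas (useful, e.g., for checking Coxeter relations), the paper's avoids choices and handles the scheme/GIT bookkeeping cleanly. The one place your sketch is genuinely thin is the verification at the adjacent vertices, which is the paper's Lemma~\ref{key} and is not a formal substitution of $\alpha_j\beta_j=\lambda_j\unit$ into the $k$-th relation: because $d_j$, $d_k$ and $d_{jk}$ differ, one must compare $\epsilon_j$-coefficients of the shifted relation arrow by arrow and push them down through the trace map $\pr_{d_h,d_{t(h)}}$ of Lemma~\ref{lem:pr}, whose multiplicity factor $f_h$ is precisely what converts the naive correction into the Cartan entry $c_{jk}$ appearing in $r_j(\bmlam)_k$. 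Two minor slips: the group acting at $j$ is $G_{d_j}(V_j)=\Aut_{R_{d_j}}(V_j\otimes R_{d_j})$, not $\GL(V_j)$; and there is no ``open subscheme where $\lambda_j$ is invertible'' to restrict to, since $\lambda_j$ is a fixed unit of $R_{d_j}$ rather than a function on the representation space, so your family-wise construction is global from the start.
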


Note that Geiss--Leclerc--Schr\"oer~\cite{GLS} also introduced reflection functors for $\Pi$ but 
we cannot use them to show the above since $\bmlam \neq 0$ by the assumption.
We conjecture that our $\mathcal{F}_j$'s satisfy the Coxeter relations of the Weyl group.

For instance, if $\bC$ is of type $A^{(1)}_1$, $A^{(1)}_2$, $A^{(1)}_3$, $D^{(1)}_4$ or $C^{(1)}_2$ 
and $\bv$ is a ``minimal positive imaginary root'', then 
$\QS_{\sQ,\bd}(\bmlam,\bv)$ for generic $\bmlam$, 
which in this case coincides with $\calN^{\mathrm{s}}_{\sQ,\bd}(\bmlam,\bv)$, 
may be regarded as the space of initial values of solutions to 
a Painlev\'e differential equation, 
and our reflection functors induce B\"acklund transformations of the equation; 
see \cite[Introduction]{YD1}. 
It is interesting for us to ask the role of 
the reflection functors for quiver schemes of the other extended Dynkin types   
in the theory of integrable systems.

The second main result is a generalization of \cite[Theorem~5.8]{YD1}.

\begin{thm}[see Section~\ref{section:normalization}]
   Suppose that a sequence of pairwise distinct vertices, 
   which we denote by $0,1, \dots ,l$ ($l>0$), satisfies 
   the following conditions:   
   \begin{itemize}
      \item vertices $i,j$ in $\{ 0,1, \dots ,l \}$ are connected by exactly one arrow if $\lvert i-j \rvert =1$, and otherwise no arrow connects them; 
      \item no arrow connects any $i \in I \setminus \{ 0,1, \dots ,l \}$ and $j \in \{ 1,2, \dots ,l \}$;
      \item $d_0=1$ and $d_i=d$ ($i=1,2, \dots ,l$) for some integer $d>1$.
   \end{itemize}
   Also, suppose that a pair $(\bmlam,\bv) \in \bigoplus_{i \in I} \C[\epsilon_i]/(\epsilon_i^{d_i}) \times \Z_{\geq 0}^I$ satisfies  
   the following conditions:
   \begin{itemize}
      \item the sequence $v_0,v_1, \dots ,v_l$ is non-increasing;
      \item $\lambda_i(0) + \lambda_{i+1}(0) + \cdots + \lambda_j(0) \neq 0$ for all pairs $i \leq j$ in $\{ 1,2, \dots ,l \}$.
   \end{itemize}
   Then there exist another quiver with multiplicities $(\check{\sQ},\check{\bd})$ 
   with the same vertex set $I$ and a pair 
   $(\check{\bmlam},\check{\bv}) \in \bigoplus_{i \in I} \C[\epsilon_i]/(\epsilon_i^{\check{d}_i}) \times \Z_{\geq 0}^I$
   such that 
   $\QS_{\sQ,\bd}(\bmlam,\bv)$ and 
   $\QS_{\check{\sQ},\check{\mathbf{d}}}(\check{\bmlam},\check{\bv})$ 
   are isomorphic. 
\end{thm}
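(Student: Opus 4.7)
The plan is to construct the data $(\check{\sQ}, \check{\bd}, \check{\bmlam}, \check{\bv})$ explicitly and to exhibit the desired isomorphism by giving mutually inverse morphisms on representation data. Recall that a $\C$-point of $\QS_{\sQ,\bd}(\bmlam,\bv)$ is represented by a stable representation of the GLS preprojective algebra attached to $(\sQ,\bd)$ with parameter $\bmlam$; restricting such a representation to the chain $0,1,\dots,l$ yields vector spaces $V_i$ (with $V_i$ a free module of rank $v_i$ over $\C[\epsilon_i]/(\epsilon_i^{d_i})$) and structure maps along consecutive vertices, satisfying the preprojective relations at each interior vertex. Since no arrow outside the chain touches $\{1,\dots,l\}$, the data on the interior of the chain is entirely internal to it except through $V_0$.

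The first substantive step is a normal-form analysis of the chain. I would show that the preprojective relations at each interior vertex $i \in \{1,\dots,l\}$, combined with the non-vanishing of every partial sum $\lambda_i(0) + \cdots + \lambda_j(0)$, force the compositions of chain maps (and their twists by powers of $\epsilon$) to attain the maximal rank compatible with the dimension inequalities $v_0 \geq v_1 \geq \cdots \geq v_l$. Concretely, I expect to prove that the composite $V_l \to V_0$ obtained by traversing the chain ``inward'' is injective, and that each $V_{i-1}$ decomposes as the image of $V_i$ plus a canonical complement whose rank is determined by the jump $v_{i-1}-v_i$. This is a scheme-theoretic analogue of the Maffei/Crawley-Boevey type normal form underlying the proof of \cite[Theorem~5.8]{YD1}.

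Next I would use this normal form to define $(\check{\sQ},\check{\bd})$ and $(\check{\bmlam},\check{\bv})$. Guided by the structure of the chain, I would promote vertex $0$'s multiplicity by setting $\check{d}_0 = d$ so that the ``absorbed'' chain data becomes part of the $\epsilon_0$-module structure at vertex $0$, while modifying the arrows and multiplicities on the chain to represent only the canonical complements from the normal form, and adjusting $\check{\bv}$ accordingly. The parameter $\check{\bmlam}$ is then built from the $\lambda_i$'s on the chain, with the component at vertex $0$ assembled out of the partial sums of $\lambda_1,\dots,\lambda_l$ so as to produce a single element of $\C[\epsilon_0]/(\epsilon_0^d)$, and with the remaining chain entries adjusted to match. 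Writing explicit formulas for the forward morphism, defining its inverse analogously, and checking regularity, stability-preservation, and mutual inversion would then promote the set-theoretic bijection to a scheme-theoretic isomorphism, appealing as needed to the GIT construction of $\QS_{\sQ,\bd}$ reviewed earlier.

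The main obstacle I anticipate is making the normal-form analysis work scheme-theoretically, i.e.\ uniformly in flat families rather than only pointwise. One must verify that the maximal-rank statements along the chain persist identically over the moduli parameters: the non-vanishing of each $\lambda_i(0) + \cdots + \lambda_j(0)$ is precisely what prevents certain polynomial identities among the chain maps from forcing rank drops, so the algebraic heart of the argument is to track these invertibility witnesses and to show that their use produces \emph{regular} inverse formulas, not merely rational ones. A secondary technical point is to check that the stability condition for $(\sQ,\bd)$-representations translates correctly to that for $(\check{\sQ},\check{\bd})$-representations under the proposed transformation, so that one truly gets an isomorphism of the GIT quotients rather than only of their open strata.
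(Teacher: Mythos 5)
Your opening step (a maximal-rank/normal-form analysis of the chain, using the non-vanishing of the partial sums $\lambda_i(0)+\cdots+\lambda_j(0)$ to force injectivity/surjectivity of the chain maps) is indeed where the paper starts: it is the content of the unlabelled lemma before Proposition~\ref{prp:HWW}, which feeds into the Hausel--Wong--Wyss identification of the leg's moment-map level set with a principal bundle over the $G_d(V_0)$-coadjoint orbit $\calO_\Theta$ of $\Theta=\bigoplus_i(\lambda_1+\cdots+\lambda_i)\,\unit_{\check V_i\otimes R_d}$. But from that point on your plan has a genuine gap: the entire mechanism that produces $(\check\sQ,\check\bd)$ is missing. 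The paper's construction rests on two further ingredients you do not identify: (i) Boalch's shifting trick (Corollary~\ref{lem3}), which replaces the Hamiltonian reduction by the full orbit $\calO_\Theta$ with a reduction by the $B_d(V_0)$-coadjoint orbit $\ccalO_\Theta$ of the truncation $\Theta^0$, at the cost of shrinking the group from $\GL_\C(V_0)$ to $G_\Theta=\prod_i\GL_\C(\check V_i)$; and (ii) the Hiroe--Yamakawa linearization (Proposition~\ref{prp:HY}), which identifies $\ccalO_\Theta$ $G_\Theta$-equivariantly and symplectically with $\bigoplus_{i\neq j}\Hom_\C(\check V_i,\check V_j)^{\oplus(d-2)}$. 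These two facts are precisely what dictate the regularized quiver: the leg vertices acquire multiplicity $\check d_i=1$, the dimension vector becomes $\check v_i=v_i-v_{i+1}$, and $(d-2)$ new arrows appear between each pair of leg vertices. Your one concrete guess about the target --- ``promote vertex $0$'s multiplicity by setting $\check d_0=d$'' --- points in the opposite direction from this (the whole purpose of regularization is to \emph{lower} multiplicities so that, e.g., Corollary~\ref{crl:main} can reduce to Nakajima quiver varieties), and your description of $\check{\bmlam}$ does not match the paper's formula $\check\lambda_i=\lambda_0+\sum_{j\le i}\lambda_{j,d-1}$, which extracts only the top coefficients because the target multiplicities are $1$.

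A secondary but real misreading: $\QS_{\sQ,\bd}(\bmlam,\bv)$ is defined in Definition~\ref{quiverscheme} as $\spec$ of the invariant ring of the moment-map fibre, i.e.\ an affine quotient with no stability condition, so your concerns about ``stability-preservation'' and distinguishing the GIT quotient from its ``open strata'' do not arise. The scheme-theoretic issues that do need care are of a different nature (freeness of the relevant group actions and the passage from geometric quotients to affine quotients of closed subsets), and they are handled in the paper by the principal-bundle statements in Propositions~\ref{prp:HWW} and the proposition following Proposition~\ref{prp:HY}, not by tracking rational versus regular inverse formulas.
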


In fact, both $(\check{\sQ},\check{\bd})$ and $(\check{\bmlam},\check{\bv})$ 
are explicitly given and $(\check{\sQ},\check{\bd})$ 
does not depend on $(\bmlam,\bv)$. 
Using this theorem we can show that some quiver schemes are (affine) algebraic varieties.

This paper is organized as follows: In Section $2$, we 
recall the definition of preprojective algebra $\Pi$ 
in the sense of Geiss--Leclerc--Schr\"oer and 
quiver schemes $\QS_{\sQ,\bd}(\bmlam,\bv)$. 
Also, we recall some result of Hausel--Wong--Wyss on coadjoint orbits, 
which we will use to prove our second main theorem.  
Sections $3$ and $4$ are devoted to prove our first and second main theorems, 
respectively. 

Throughout the paper, we write $\otimes$ for $\otimes_{\C}$.

\section{Quiver schemes}\label{sec:quiver-scheme}

In this section we recall the definitions of 
preprojective algebras in the sense of Geiss--Leclerc--Schr\"oer~\cite{GLS} 
and quiver schemes introduced by Hausel--Wong--Wyss~\cite{HWW}.

\subsection{Preliminaries}\label{subsec:pre}

In this subsection we introduce some symplectic vector spaces
related to truncated polynomial rings;
they are building blocks of quiver schemes.

For a positive integer $d$, put
\[
   R_d \coloneqq \C\fp{\epsilon}/\epsilon^d \C\fp{\epsilon},
   \quad R^d \coloneqq \epsilon^{-d}\C\fp{\epsilon}/\C\fp{\epsilon}
   \subset \C\fl{\epsilon}/\C\fp{\epsilon}.
\]
We also denote the variable $\epsilon$ by $\epsilon_d$
in order to distinguish it from other variables.
The bilinear form
\[
   \C\fp{\epsilon} \times \epsilon^{-d}\C\fp{\epsilon} \to \C;
   \quad (f,g) \mapsto
   \res_{\epsilon=0} \left( f(\epsilon)g(\epsilon)\mathrm{d}\epsilon \right)
\]
induces a non-degenerate pairing $R_d \times R^d \to \C$,
by which we may identify 
the vector space $R^d$ with the $\C$-dual space $R_d^*$ of $R_d$.
On the other hand, the multiplication by $\epsilon^d$ induces
a $\C$-linear isomorphism $R^d \simeq R_d$.
Thus we may also identify $R_d$ with $R_d^*$;
the corresponding pairing $R_d \times R_d \to \C$ is 
\[
   (f,g) \mapsto \langle f,g \rangle_d \coloneqq
   \res_{\epsilon=0} \left( f(\epsilon)g(\epsilon)\,\frac{\mathrm{d}\epsilon}{\epsilon^d} \right).
\]
More generally, for homomorphisms
$X \colon \bbW \to \bbV$, $Y \colon \bbV \to \bbW$
between free finitely generated $R_d$-modules $\bbV, \bbW$, we define
\begin{equation}\label{eq:pairing}
   \langle X, Y \rangle_d
   = \res_{\epsilon=0} \left( \tr_{R_d}(XY)\,\frac{\mathrm{d}\epsilon}{\epsilon^d} \right)
   = \langle \tr_{R_d}(XY),1 \rangle_d,
\end{equation}
where $\tr_{R_d} \colon \End_{R_d}(\bbV) \to R_d$ is the trace.
It gives an isomorphism $\Hom_{R_d}(\bbV,\bbW) \simeq \Hom_{R_d}(\bbW,\bbV)^*$.

The $\C$-algebra $R_d$ is $d$-dimensional
with a basis $\{\,1,\epsilon,\ldots ,\epsilon^{d-1} \,\}$.
More generally, if $d$ is a multiple of some positive integer $c$,
the homomorphism
\[
   R_c \to R_d; \quad \epsilon_c \mapsto \epsilon_d^{d/c}
\]
makes $R_d$ into a free $R_c$-algebra with a basis
$\{\, 1, \epsilon_d, \dots ,\epsilon_d^{d/c -1}\,\}$.
In this manner we equip each $R_d$-module $\bbV$ with a structure of $R_c$-module.
\begin{lem}\label{lem:pr}
   Suppose that $\bbV$ is a free $R_d$-module and let 
   $N \colon \bbV \to \bbV$ be the multiplication by $\epsilon_d$. Then the map
   \[
      \pr_{c,d} \colon \End_{R_c}(\bbV) \to \End_{R_d}(\bbV); \quad
      Z \mapsto \sum_{k=0}^{d/c-1} N^k Z N^{d/c-1-k}
   \]
   is the transpose of the inclusion $\End_{R_d}(\bbV) \hookrightarrow \End_{R_c}(\bbV)$:
   \[
      \langle \pr_{c,d}(Z), Z' \rangle_d = \langle Z, Z' \rangle_c
      \quad (Z \in \End_{R_c}(\bbV),\ Z' \in \End_{R_d}(\bbV)).
   \]
\end{lem}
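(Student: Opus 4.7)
The plan is to show that $\pr_{c,d}$ is the transpose of the inclusion $\End_{R_d}(\bbV) \hookrightarrow \End_{R_c}(\bbV)$ with respect to the two residue pairings. First, I would check that $\pr_{c,d}(Z)$ genuinely lies in $\End_{R_d}(\bbV)$: the commutator $[N,\pr_{c,d}(Z)]$ telescopes to $N^{d/c} Z - Z N^{d/c} = [\epsilon_c, Z]$, which vanishes because $Z$ is $R_c$-linear and $N^{d/c}$ is multiplication by $\epsilon_c$.

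The core of the proof rests on a reformulation. Set $n := d/c$, and decompose $R_d = \bigoplus_{m=0}^{n-1} R_c\,\epsilon_d^m$ as an $R_c$-module. Then $\tr_{R_d}(\pr_{c,d}(Z) Z') \in R_d$ is uniquely written as $\sum_{m=0}^{n-1} T_m\,\epsilon_d^m$ with $T_m \in R_c$. Since $\epsilon_d^{d-1}=\epsilon_c^{c-1}\epsilon_d^{n-1}$, the coefficient of $\epsilon_d^{d-1}$ picked out by $\langle \pr_{c,d}(Z),Z'\rangle_d$ equals the coefficient of $\epsilon_c^{c-1}$ in $T_{n-1}$. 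Hence the lemma reduces to the stronger claim
\[
   T_{n-1} = \tr_{R_c}(Z Z') \quad \text{in } R_c,
\]
after which one takes $\epsilon_c^{c-1}$-coefficients on both sides.

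To prove this stronger identity, I would fix an $R_d$-basis $\{e_\alpha\}$ of $\bbV$, inducing the $R_c$-basis $\{\epsilon_d^j e_\alpha\}_{0 \le j < n,\,\alpha}$. Writing $Z(\epsilon_d^j e_\alpha) = \sum_{l,\beta} Z^{l,\beta}_{j,\alpha}\,\epsilon_d^l e_\beta$ with $Z^{l,\beta}_{j,\alpha}\in R_c$, one unfolds $\pr_{c,d}(Z)(e_\beta)=\sum_k N^k Z(\epsilon_d^{n-1-k} e_\beta)$, expresses the entries $(\pr_{c,d}(Z))_{\alpha\beta}\in R_d$ as combinations of the ``diagonal sums'' $D_t(Z;\alpha,\beta) := \sum_j Z^{j+t,\alpha}_{j,\beta}$, and extracts the $\epsilon_d^{n-1}$-component of $\sum_{\alpha,\beta} Z'_{\beta\alpha}\,(\pr_{c,d}(Z))_{\alpha\beta}$. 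Independently, computing $\tr_{R_c}(Z Z')$ directly in the same basis yields the very same expression, since the wrap-around rule $\epsilon_d^{j+m} = \epsilon_c\,\epsilon_d^{j+m-n}$ (valid when $j+m\ge n$) produces precisely the $\epsilon_c$-multiplied diagonal sums appearing in $T_{n-1}$.

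The main obstacle is the combinatorial bookkeeping: one must track carefully how powers of $\epsilon_d$ redistribute between the $R_c$-basis shift on $\bbV$ and the $\epsilon_c$-factors arising from $N^n = \epsilon_c$. This can be considerably eased by first reducing to the rank-one case $\bbV = R_d$, since $\pr_{c,d}$ acts entry-wise on any $R_d$-basis (because $N$ acts as a scalar) and both sides of the identity are bilinear in the matrix entries; the rank-one computation then becomes a direct index chase after the notation is in place.
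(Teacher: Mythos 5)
Your proposal is correct and follows essentially the same route as the paper: both arguments expand everything in the $R_c$-basis $\{\epsilon_d^k v_i\}$ induced from an $R_d$-basis of $\bbV$ and match coefficients of powers of $\epsilon_d$ against powers of $\epsilon_c$ (your stronger intermediate claim $T_{n-1}=\tr_{R_c}(ZZ')$ is exactly what that computation yields, and your reduction to the rank-one case is a harmless repackaging of it). The one simplification you miss is the identity $\pr_{c,d}(Z)Z'=\pr_{c,d}(ZZ')$ for $Z'\in\End_{R_d}(\bbV)$, which the paper uses to reduce at the outset to the case $Z'=\unit_{\bbV}$ and thereby eliminates most of the bookkeeping involving $Z'$ that you describe.
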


\begin{proof}
   Since $\pr_{c,d}(Z)Z' = \pr_{c,d}(ZZ')$ for
   $Z \in \End_{R_c}(\bbV)$, $Z' \in \End_{R_d}(\bbV)$,
   it suffices to show
   \[
      \res_{\epsilon_d =0} \left( \tr_{R_d} \left( \pr_{c,d}(Z) \right) \frac{\mathrm{d}\epsilon_d}{\epsilon_d^d} \right)
      = \res_{\epsilon_c =0} \left( \tr_{R_c}(Z) \frac{\mathrm{d}\epsilon_c}{\epsilon_c^c} \right)
      \quad (Z \in \End_{R_c}(\bbV)).
   \]
   Take an ordered $R_d$-basis $(v_1,v_2, \dots ,v_n)$ of $\bbV$
   and let $(Z_{ij}) \in M_n(R_d)$ be the matrix representation of
   $\pr_{c,d}(Z)$.
   Also, let $(Z_{(i,k)(j,l)}) \in M_{nd/c}(R_c)$ be
   the matrix representation of $Z$
   with respect to the $R_c$-basis
   $v_{i,k} \coloneqq \epsilon_d^k v_i$,
   $i=1,2, \dots ,n$, $k=0,1, \dots ,d/c-1$ of $\bbV$.
   Then
   \[
      Z_{ij} = \sum_{k,l=0}^{d/c-1} \epsilon_d^{d/c-1-l+k} \left. Z_{(i,k)(j,l)}\right|_{\epsilon_c = \epsilon_d^{d/c}}.
   \]
   Notice that 
   \[
      \res_{\epsilon_d =0} \left(  f\frac{\epsilon_c}{\epsilon_d}\frac{\mathrm{d}\epsilon_d}{\epsilon_d^d} \right)
      = \res_{\epsilon_c =0} \left( f\frac{\mathrm{d}\epsilon_c}{\epsilon_c^c} \right)\quad
      \left(f=\left(\sum f_k\epsilon_c^k\right)\in R_{c}\right),
   \]
   thus the formula one easily deduces
   \[
      \res_{\epsilon_d =0} \left( Z_{ij}\, \frac{\mathrm{d}\epsilon_d}{\epsilon_d^d} \right)
      =
      \sum_{k=0}^{d/c-1} \res_{\epsilon_c =0} \left( Z_{(i,k)(j,k)}\, \frac{\mathrm{d}\epsilon_c}{\epsilon_c^c} \right).
   \]
   Hence
   \begin{align*}
      \res_{\epsilon_d =0} \left( \tr_{R_d} \left( \pr_{c,d}(Z) \right) \frac{\mathrm{d}\epsilon_d}{\epsilon_d^d} \right)
       & = \sum_{i=1}^n \res_{\epsilon_d =0} \left( Z_{ii}\, \frac{\mathrm{d}\epsilon_d}{\epsilon_d^d} \right)                            \\
       & = \sum_{i=1}^n \sum_{k=0}^{d/c-1} \res_{\epsilon_c =0} \left( Z_{(i,k)(i,k)}\, \frac{\mathrm{d}\epsilon_c}{\epsilon_c^c} \right)
      = \res_{\epsilon_c =0} \left( \tr_{R_c}(Z) \frac{\mathrm{d}\epsilon_c}{\epsilon_c^c} \right).
   \end{align*}
\end{proof}

For a finite dimensional $\C$-vector space $V$,
define
\[
   G_d(V) = \Aut_{R_d}(V \otimes R_d), \quad
   \g_d(V) = \End_{R_d}(V \otimes R_d).
\]
Since $G_d(V) \subset \GL_\C(V \otimes R_d)$ is the centralizer
of the multiplication by $\epsilon$,
it is a linear algebraic group with Lie algebra $\g_d(V)$.
We have an obvious isomorphism $\g_d(V) \simeq \gl_\C(V) \otimes R_d$, 
which enables us to identify each element of $\g_d(V)$ with a matrix polynomial 
\[
   \xi = \sum_{k=0}^{d-1} \xi_k \epsilon^k, \quad \xi_k \in \gl_\C(V).
\]
For instance, the identity $\unit_{V \otimes R_d}$ is identified with 
$\unit_V$. 
As a subset of $\g_d(V)$, 
the group $G_d(V)$ consists of all 
$g = \sum_{k=0}^{d-1} g_k \epsilon^k \in \g_d(V)$ such that $\det g_0 \neq 0$. 
Also, the pairing~\eqref{eq:pairing} for $\bbW=\bbV$ enables us to
identify $\g_d(V)$ with its $\C$-dual space.

Now let $V, W$ be two finite dimensional $\C$-vector spaces
and $d,c$ be positive integers with $c \mid d$.
Put $\bbV = V \otimes R_d$, $\bbW = W \otimes R_c$ and
consider the vector space
\[
   \bM \coloneqq \Hom_{R_c}(\bbW,\bbV) \oplus \Hom_{R_c}(\bbV,\bbW),
\]
together with the action of the linear algebraic group $G_d(V) \times G_c(W)$ defined by 
\[
   (g,h) \colon (X,Y) \mapsto (gXh^{-1},hYg^{-1}).
\]
Since $\bbV$ is also free as an $R_c$-module,
the two-form
\[
   \omega \coloneqq \langle \mathrm{d}X \wedge \mathrm{d}Y \rangle_c =
   \res_{\epsilon_c =0} \left(\tr_{R_c} (\mathrm{d}X \wedge \mathrm{d}Y)\frac{\mathrm{d}\epsilon_c}{\epsilon_c^c} \right)
\]
is a $G_d(V) \times G_c(W)$-invariant symplectic form on $\bM$.
Here, in the above $X$ denotes the first projection $\bM \to \Hom_{R_c}(\bbW,\bbV)$ 
regarded as a $\Hom_{R_c}(\bbW,\bbV)$-valued function on
$\bM$ and $Y$ denotes the second projection.  

The symplectic form $\omega$ has another description.
The extension of scalar gives an isomorphism
\[
   \Hom_{R_c}(\bbW,\bbV) \xrightarrow{\sim} \Hom_{R_d}(\bbW \otimes_{R_c} R_d,\bbV),
\]
which we denote by $X \mapsto X^{R_d}$.
Furthermore, the projection
\[
   \bbW \otimes_{R_c} R_d
   = \bigoplus_{k=0}^{d/c-1} \bbW \epsilon_d^k \to \bbW \epsilon_d^{d/c-1}
   \simeq \bbW
\]
induces an isomorphism
\begin{equation}\label{eq:iso}
   \Hom_{R_d}(\bbV,\bbW \otimes_{R_c} R_d) \xrightarrow{\sim} \Hom_{R_c}(\bbV,\bbW),
\end{equation}
whose inverse is explicitly described as
\[
   Y \mapsto Y^{R_d} \colon \bbV \ni v \mapsto
   \sum_{k=0}^{d/c-1} Y(\epsilon_d^{d/c-1-k}v) \otimes \epsilon_d^k.
\]
Observe that for $X \in \Hom_{R_c}(\bbW,\bbV)$ and $Y \in \Hom_{R_c}(\bbV,\bbW)$,
we have
\[
   X^{R_d} Y^{R_d} = \sum_{k=0}^{d/c-1} N^k XY N^{d/c-1-k}
   = \pr_{c,d}(XY).
\]
Thus the previous lemma shows
\[
   \langle X,Y \rangle_c = \langle X^{R_d},Y^{R_d} \rangle_d
   \quad ((X,Y) \in \bM),
\]
and hence
\[
   \omega = \langle \mathrm{d}X^{R_d} \wedge \mathrm{d}Y^{R_d} \rangle_d.
\]

\begin{prop}\label{prop:moment}
   The map
   \[
      \mu \colon \bM \to \g_d(V) \simeq \g_d(V)^*;
      \quad (X,Y) \mapsto X^{R_d} Y^{R_d}
   \]
   is a moment map generating the $G_d(V)$-action.
\end{prop}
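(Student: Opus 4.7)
The plan is to verify the two defining conditions of a moment map: (i) for every $\xi \in \g_d(V)$, $d\mu^\xi = \iota_{\xi^*}\omega$, where $\mu^\xi(X,Y) \coloneqq \langle \mu(X,Y), \xi \rangle_d$ and $\xi^*$ denotes the fundamental vector field; and (ii) $G_d(V)$-equivariance with respect to the adjoint action, which under the self-duality $\g_d(V) \simeq \g_d(V)^*$ corresponds to the coadjoint action.

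First I would pin down the infinitesimal action and two algebraic identities for extensions of scalars. Restricting the given $G_d(V) \times G_c(W)$-action, the element $g \in G_d(V)$ sends $(X,Y)$ to $(gX, Yg^{-1})$, so the fundamental vector field is $\xi^*_{(X,Y)} = (\xi X, -Y\xi)$. The key identities are
\[
   (\xi X)^{R_d} = \xi \circ X^{R_d}, \qquad (Y\xi)^{R_d} = Y^{R_d} \circ \xi.
\]
The first is immediate because $\xi$ is $R_d$-linear, so applying it before or after the extension of scalars gives the same thing. The second is verified directly from the explicit formula $Y^{R_d}(v) = \sum_{k=0}^{d/c-1} Y(\epsilon_d^{d/c-1-k} v) \otimes \epsilon_d^k$ by pulling the $R_d$-linear map $\xi$ through the factors $\epsilon_d^{d/c-1-k}$.

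Next I would compare $\iota_{\xi^*}\omega$ with $d\mu^\xi$ using the rewritten expression $\omega = \langle dX^{R_d} \wedge dY^{R_d} \rangle_d$ established just above the proposition. Evaluating at a tangent vector $(\dot X, \dot Y)$ gives
\[
   \iota_{\xi^*}\omega(\dot X, \dot Y) = \langle (\xi X)^{R_d}, \dot Y^{R_d} \rangle_d + \langle \dot X^{R_d}, (Y\xi)^{R_d} \rangle_d = \langle \xi X^{R_d}, \dot Y^{R_d} \rangle_d + \langle \dot X^{R_d}, Y^{R_d} \xi \rangle_d,
\]
while applying the Leibniz rule to $\mu^\xi(X,Y) = \res_{\epsilon_d=0} \tr_{R_d}(X^{R_d} Y^{R_d} \xi)\, \tfrac{d\epsilon_d}{\epsilon_d^d}$ produces the same two terms after a single cyclic permutation of the trace inside the residue. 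Equivariance is then a one-line check: applying the identities above to $g, g^{-1} \in G_d(V)$ (viewed in $\g_d(V)$) yields $(gX)^{R_d} = g X^{R_d}$ and $(Yg^{-1})^{R_d} = Y^{R_d} g^{-1}$, whence $\mu(gX, Yg^{-1}) = g\, \mu(X,Y)\, g^{-1} = \Ad(g)\mu(X,Y)$.

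I do not expect a real obstacle here: the statement is essentially the standard fact that $(X,Y) \mapsto XY$ is a moment map on a cotangent-bundle-type representation, transported through the isomorphism \eqref{eq:iso}. The only point demanding care is bookkeeping of the two layers of linearity — elements of $\g_d(V)$ act $R_d$-linearly while $X$ and $Y$ are merely $R_c$-linear — but this is precisely what the two identities for $(\xi X)^{R_d}$ and $(Y\xi)^{R_d}$ encapsulate, and once they are in hand the remainder is routine trace manipulation.
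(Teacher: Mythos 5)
Your proof is correct and follows essentially the same route as the paper: both hinge on the fundamental vector field $\xi^*_{(X,Y)}=(\xi X,-Y\xi)$, the compatibility of $(\cdot)^{R_d}$ with composition by $R_d$-linear maps, and cyclicity of the trace under $\langle\cdot,\cdot\rangle_d$. The only cosmetic difference is that the paper shortcuts the verification by writing $\omega=-\mathrm{d}\langle Y^{R_d},\mathrm{d}X^{R_d}\rangle_d$ and reading off $\mu^\xi$ as the invariant primitive contracted with $\xi^*$, whereas you check $d\mu^\xi=\iota_{\xi^*}\omega$ and equivariance directly.
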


\begin{proof}
   We have
   \[
      \omega = -\langle \mathrm{d}Y^{R_d} \wedge \mathrm{d}X^{R_d} \rangle_d
      = -\mathrm{d} \langle Y^{R_d}, \mathrm{d}X^{R_d} \rangle_d.
   \]
   Also, the generating vector fields $\xi^*$, $\xi \in \g_d(V)$
   are given by $\xi^*_{(X,Y)} = (\xi X,-Y\xi)$.
   Hence the moment map $\mu \colon \bM \to \g_d(V)$
   with $\mu(0,0)=0$ is
   \[
      \langle \mu(X,Y),\xi \rangle
      = \langle Y^{R_d}, \mathrm{d}X^{R_d}(\xi^*) \rangle_d
      = \langle Y^{R_d}, \xi X^{R_d} \rangle_d
      = \langle X^{R_d} Y^{R_d}, \xi \rangle_d
      \quad (\xi \in \g_d(V)).
   \]
\end{proof}

\begin{rem}\label{rem:loop-moment}
   Through the isomorphism 
   \[
     \bM \simeq \Hom_{R_d}(\bbW \otimes_{R_c} R_d,\bbV) \oplus \Hom_{R_d}(\bbV,\bbW \otimes_{R_c} R_d), 
   \]
   the action of $G_c(W)$ extends to an action of $G_d(W)=\Aut_{R_d}(\bbW \otimes_{R_c} R_d)$. 
   This action is Hamiltonian with moment map 
   \[
     \mu' \colon \bM \to \g_d(W); 
     \quad (X,Y) \mapsto -Y^{R_d} X^{R_d}.
   \]
   Since $\g_d(W) \simeq \g_c(W) \otimes_{R_c} R_d \simeq \gl_\C(W) \otimes R_d$, 
   we may also identify the dual space $\g_d(W)^*$ with 
   \[
     \gl_\C(W) \otimes R^d \simeq \g_c(W) \otimes_{R_c} R^d, 
   \] 
   where we regard $R^d$ as a free $R_d$-module of rank one 
   using the linear isomorphism $\epsilon^{-d} \colon R_d \xrightarrow{\sim} R^d$.
   Under this identification, the moment map $\nu$ is expressed as 
   \[
     \mu'(X,Y) = -\sum_{k=0}^{d/c-1} Y N^k X \otimes \epsilon_d^{d/c-d-k-1}.
   \]
   When $c=1$, such a moment map appears in \cite{YD1,YD2,YD3}.  
\end{rem}

\subsection{GLS preprojective algebras and quiver schemes}

In this paper a quiver $\sQ$ is always assumed to be finite,
and usually denoted as $\sQ =(I,\Omega,s,t)$,
where $I$ is the set of vertices,
$\Omega$ is the set of arrows,
and $s,t \colon \Omega \to I$ are the source/target maps.
For a quiver $\sQ =(I,\Omega,s,t)$, we denote by $\overline{\sQ}=(I,\ov{\Omega},s,t)$
the quiver obtained by reversing the orientation of each arrow of $\sQ$;
so for each $h \in \Omega$ we have the reversed arrow $\ov{h} \in \ov{\Omega}$,
satisfying $s(\ov{h})=t(h)$, $t(\ov{h})=s(h)$.
Putting together all the arrows of $\sQ$ and $\ov{\sQ}$
we get a quiver $\sQ+\overline{\sQ}=(I,H,s,t)$ with
arrow set $H \coloneqq \Omega \sqcup \ov{\Omega}$,
called the \emph{double} of $\sQ$,
together with an involution $H \to H$, $h \mapsto \ov{h}$.
We define a map $\sgn \colon H \to \{ \pm 1\}$ by $\sgn |_{\Omega} \equiv 1$, $\sgn |_{\ov{\Omega}} \equiv -1$.

\begin{defin}
   A \emph{quiver with multiplicities} is a quiver $\sQ$
   with each vertex $i$ equipped with a positive integer $d_i$,
   called the \emph{multiplicity} of $i$.
\end{defin}

Take a quiver $\sQ=(I,\Omega,s,t)$ with multiplicities $\bd=(d_i)_{i \in I}$.
For $i, j \in I$, put
\[
   d_{ij} \coloneqq \gcd(d_i,d_j),
   \quad f_{ij} \coloneqq \frac{d_j}{d_{ij}},
\]
and for $h \in H$, put
\[
   d_h \coloneqq d_{s(h)t(h)},
   \quad f_h \coloneqq f_{s(h)t(h)}.
\]
Let $\sQ' =(I,H',s,t)$ be the quiver obtained by
adding an edge-loop $\ell_i$ to the double $\sQ + \ov{\sQ}$ for each $i\in I$.
Let us recall the preprojective algebras
in the sense of Geiss--Leclerc--Schr\"{o}er~\cite{GLS}:

\begin{defin}
   The \emph{GLS preprojective algebra} $\Pi$ associated to the quiver with multiplicities $(\sQ,\bd)$
   is defined to be
   the quotient of the path algebra of $\sQ'$ modulo the following relations:
   \begin{enumerate}[({P}1)]
      \item $\ell_i^{d_i}=0$ for any $i\in I$;
      \item $\ell_{t(h)}^{f_h}h =h \ell_{s(h)}^{f_{\ov{h}}}$ for each arrow $h$ of $\sQ + \ov{\sQ}$;
      \item the \emph{mesh relations}
            \[
               \sum_{h \in H; t(h)=i}\sum_{k=0}^{f_h -1}\sgn(h)\ell^k_i h \overline{h} \ell_i^{f_h -1-k}=0
               \quad (i\in I).
            \]
   \end{enumerate}
\end{defin}

By definition, a (finite dimensional) representation of $\Pi$ 
is given by a datum consisting of
\begin{itemize}
   \item a finite dimensional $\C$-vector space $\bbV_i$ for each $i \in I$;
   \item a linear map $B_h \colon \bbV_i \to \bbV_j$ for each arrow $h \colon i \to j$ of $\sQ + \ov{\sQ}$;
   \item a linear transformation $N_i$ of $\bbV_i$ for each $i \in I$,
\end{itemize}
such that
\begin{enumerate}[({P'}1)]
   \item $N_i^{d_i}=0$ for any $i\in I$;
   \item $N_{t(h)}^{f_h}B_h =B_h N_{s(h)}^{f_{\ov{h}}}$ for each arrow $h$ of $\sQ + \ov{\sQ}$;
   \item the mesh relations
         \[
            \sum_{h \in H; t(h)=i}\sum_{k=0}^{f_h -1}\sgn(h) N^k_i B_h B_{\overline{h}} N_i^{f_h -1-k}=0
            \quad (i\in I).
         \]
\end{enumerate}
Observe that relations (P'1) make each $\bbV_i$ into a module over $R_{d_i}$,
and then relations (P'2) are equivalent to that
each $B_h \colon \bbV_{s(h)} \to \bbV_{t(h)}$ is an $R_{d_h}$-homomorphism.
In what follows we consider the case where
each $\bbV_i$ is a free $R_{d_i}$-module
(such a representation is said to be \emph{locally free}),
and take an $I$-graded $\C$-vector space
$\bV = \bigoplus_{i \in I} V_i$ so that
\[
   \bbV_i = V_i \otimes R_{d_i} \quad (i \in I).
\]
Then the tuple $\bB =(B_h)_{h \in H}$ lives in the vector space
\[
   \bM_{\sQ,\bd}(\bV) \coloneqq \bigoplus_{h \in H} \Hom_{R_{d_h}} (\bbV_{s(h)},\bbV_{t(h)}).
\]
Put
\[
   G_\bd(\bV) \coloneqq \prod_{i \in I} G_{d_i}(V_i), \quad
   \g_\bd(\bV) \coloneqq \Lie G_\bd(\bV) = \bigoplus_{i \in I} \g_{d_i}(V_i).
\]
For simplicity, we use the following notation for variables:
\[
   \epsilon_i \coloneqq \epsilon_{d_i} \in R_{d_i}, \quad
   \epsilon_h \coloneqq \epsilon_{d_h} \in R_{d_h}.
\]
The observations made in the previous subsection show that
$\bM_{\sQ,\bd}(\bV)$ has a symplectic form
\[
   \omega = \sum_{h \in \Omega} \braket{dB_h\wedge dB_{\overline{h}}}_{d_h}
   = \frac12 \sum_{h\in H}\sgn(h)\braket{dB_h\wedge dB_{\overline{h}}}_{d_h},
\]
and the obvious action of $G_\bd(\bV)$ on $\bM_{\sQ,\bd}(\bV)$ is
Hamiltonian with moment map
\begin{align*}
   \mu_{\bV}        & =(\mu_{\bV,i})_{i\in I} \colon
   \bM_{\sQ,\bd}(\bV) \to \g_d(\bV),             \\
   \mu_{\bV,i}(\bB) & \coloneqq \sum_{\substack{h\in H, \\t(h)=i}} \sgn(h) B_h^{R_{d_i}} B_{\overline{h}}^{R_{d_i}}
   = \sum_{\substack{h\in H,                            \\t(h)=i}} \sum_{k=0}^{f_h-1}\sgn(h)N_i^k B_h B_{\overline{h}} N _i^{f_h-k-1}. 
\end{align*}
Therefore the mesh relations (P'3) are exactly the same as the
moment map relation $\mu_{\bV}(\bB) = 0$.

Since two points on $\mu_{\bV}(\bB)$ are in the same $G_\bd(\bV)$-orbit
if and only if the corresponding representations of $\Pi$ are isomorphic,
we see that the isomorphism classes of locally free representations of $\Pi$
with fixed dimension vector are parametrized by
the orbit space $\mu_{\bV}^{-1}(0)/G_\bd(\bV)$.
Motivated by this observation, we define the quiver schemes 
in the sense of Hausel--Wong--Wyss~\cite{HWW} as follows:

\begin{defin} \label{quiverscheme}
   For $\bmlam =(\lambda_i) \in R_{\bd}\coloneqq \bigoplus_{i \in I} R_{d_i}$
   and a finite dimensional $I$-graded $\C$-vector space $\bV$,
   we define
   \[
      \QS_{\sQ,\bd}(\bmlam,\bv) = \spec \left(\C[\,\mu_{\bV}^{-1}(-\bmlam\,\unit_{\bV})\,]^{G_{\mathbf{d}}(\bV)}\right),
   \]
   where $\bv \coloneqq (\dim V_i)_{i \in I} \in \Z_{\geq 0}^I$
   is the dimension vector of $\bV$ and $\bmlam\,\unit_{\bV} \coloneqq (\lambda_i\,\unit_{V_i}) \in \g_\bd(\bV)^{G_\bd(\bV)}$.
   Also, for convenience we put $\QS_{\sQ,\bd}(\bmlam,\bv) = \emptyset$ for $\bv \in \Z^I \setminus \Z_{\geq 0}^I$.
   We call $\QS_{\sQ,\bd}(\bmlam,\bv)$ the \textit{quiver scheme} associated to $(\sQ,\bd)$ with dimension vector $\bv$ and complex parameter $\bmlam$.
\end{defin}

When $d_i=1$ for all $i \in I$,
the quiver scheme $\QS_{\sQ,\bd}(\bmlam,\bv)$
is Nakajima's quiver variety (with trivial real parameter).

\begin{rem}
   If $\bV$, $\bV'$ are two $I$-graded $\C$-vector spaces with 
   $\dim \bV = \dim \bV' = \bv$, 
   then we have a \emph{canonical} isomorphism 
   \[
      \spec \left(\C[\,\mu_{\bV}^{-1}(-\bmlam\,\unit_{\bV})\,]^{G_{\mathbf{d}}(\bV)}\right) 
      \simeq 
      \spec \left(\C[\,\mu_{\bV'}^{-1}(-\bmlam\,\unit_{\bV'})\,]^{G_{\mathbf{d}}(\bV')}\right).
   \]
   Thus we are identifying them, which is the reason why we use the notation  
   $\QS_{\sQ,\bd}(\bmlam,\bv)$ rather than $\QS_{\sQ,\bd}(\bmlam,\bV)$ 
   for the quiver scheme.

   Also, the isomorphism class of $\QS_{\sQ,\bd}(\bmlam,\bv)$ 
   does not depend on the orientation of the quiver $\sQ$.
\end{rem}

\begin{rem}\label{rem:value}
   Note that the subgroup 
   \[
     \C^\times \unit_{\bV} \coloneqq 
     \{\, (c\,\unit_{V_i})_{i \in I} \mid c \in \C^\times\,\} 
     \subset G_{\bd}(\bV) 
   \]
   trivially acts on $\bM_{\sQ,\bd}(\bV)$. 
   Hence the moment map $\mu_{\bV}$ takes values perpendicular to 
   the Lie algebra of $\C^\times \unit_{\bV}$, namely, 
   the image of $\mu_{\bV}$ is contained in 
   \[
     \g_{\bd}(\bV)_0 \coloneqq 
     \left\{\, (\xi_i)_{i \in I} \in \g_{\bd}(\bV) \relmiddle|  
     \sum_{i \in I} \res_{\epsilon_i =0} \left( \tr_{R_{d_i}}(\xi_i)\,\frac{\mathrm{d}\epsilon_i}{\epsilon_i^{d_i}} \right) = 0\,\right\}. 
   \]
   It follows that the quiver scheme 
   $\QS_{\sQ,\bd}(\bmlam,\bv)$ is empty unless 
   \begin{equation}\label{eq:level}
     \sum_{i \in I} v_i \res_{\epsilon_i =0} \left( \lambda_i\,\frac{\mathrm{d}\epsilon_i}{\epsilon_i^{d_i}} \right) = 0.
   \end{equation}
\end{rem}

\begin{rem}\label{rem:dim}
   Let $\bA=(a_{ij})_{i,j \in I}$ be the adjacency matrix 
   of the underlying graph of $\sQ$, i.e.,  
   \[
      a_{ij} = \#\{\, h \in H \mid s(h)=i,\, t(h)=j \,\},
   \]
   and put  
   \begin{equation}\label{eq:GCM}
      \bA'\coloneqq \left(\frac{a_{ij}}{d_{ij}}\right)_{i,j\in I} ,\quad 
      \bD\coloneqq \text{diag}(d_i)_{i\in I},\quad 
      \bC=(c_{ij})_{i,j\in I}\coloneqq 2\,\unit -\bA' \bD,
   \end{equation}
   where $\unit$ denotes the identity matrix. 
   Define a symmetric bilinear form 
   $(\phantom{v},\phantom{v})$ on $\Z^I$ by 
   \begin{equation}\label{eq:symm}
      (\bv,\bw) = {}^t \bv \bD \bC \bw \quad (\bv,\bw \in \Z^I). 
   \end{equation}
   If we formally apply the dimension formula for Hamiltonian reductions, 
   then the dimension of $\QS_{\sQ,\bd}(\bmlam,\bv)$ is equal to 
   \begin{align}
      \dim \bM_\sQ(\bV_{\bd}) - 2\dim \left( G_\bd(\bV)/\C^\times \unit \right) 
      &= \sum_{h \in H} \frac{v_{s(h)} d_{s(h)} v_{t(h)} d_{t(h)}}{d_h} -2\sum_{i \in I} v_i^2 d_i +2 \\
      &= \sum_{i,j \in I} \frac{a_{ij}}{d_{ij}} v_i d_i v_j d_j -2\sum_{i \in I} v_i^2 d_i +2 \\ 
      &= 2- {}^t \bv (2\,\bD - \bD \bA' \bD) \bv = 2 - (\bv,\bv). 
   \end{align}
   Hence the ``expected dimension'' of $\QS_{\sQ,\bd}(\bmlam,\bv)$ is equal to 
   $2 - (\bv,\bv)$, as in the case of quiver varieties.

   Note that if $\sQ$ has no edge-loops, then 
   $\bC$ is a symmetrizable generalized Cartan matrix with symmetrizer $\bD$, 
   and $(\phantom{v},\phantom{v})$ is 
   the standard symmetric bilinear form on the root lattice 
   (identified with $\Z^I$ using the basis consisting of the simple roots)
   associated to $\bD$. 
   Clearly $\bC$, $\bD$ do not depend on the orientation of the quiver $\sQ$, 
   so they only depend on the underlying ``graph with multiplicities''.
   All symmetrizable generalized Cartan matrices may be constructed in this way; 
   see \cite{GLS} for the inverse construction. 
\end{rem}

\begin{rem}\label{rem:QSvsQV}
   Let $\bV_{\bd}$ be the $I$-graded vector space 
   $\bigoplus_{i\in I} V_i \otimes R_{d_i}$
   and consider the symplectic vector space
   \[
      \bM_\sQ(\bV_{\bd}) \coloneqq \bigoplus_{h \in H} \Hom_\C(V_{s(h)} \otimes R_{d_{s(h)}},V_{t(h)} \otimes R_{d_{t(h)}})
   \]
   instead of $\bM_{\sQ,\bd}(\bV)$.
   The group $G_{\bd}(\bV)$ acts on $\bM_\sQ(\bV_{\bd})$ as a subgroup of
   \[
      \GL(\bV_{\bd}) \coloneqq \prod_{i \in I} \GL_\C(V_i \otimes R_i),
   \]
   and the quiver varieties with multiplicities introduced by
   the second named author~\cite{YD1}
   are defined as Hamiltonian reductions of $\bM_\sQ(\bV_{\bd})$
   by the action of $G_{\bd}(\bV)$.
   If $\gcd(d_i,d_j)=1$ for all $i,j \in I$ with $a_{ij}\geq 1$,
   then $\bM_{\sQ,\bd}(\bV) = \bM_{\sQ}(\bV_{\bd})$ and hence
   they are essentially the same as quiver schemes,
   although they are defined as
   complex manifolds (not schemes) using geometric invariant theory.
\end{rem}

\subsection{Some $G_d(V)$-coadjoint orbits}\label{subsec:HWW}

In this subsection we 
fix a finite dimensional $\C$-vector space $V$ 
together with a positive integer $d$, and 
review a result of Hausel--Wong--Wyss on some $G_d(V)$-coadjoint orbits.

Take any direct sum decomposition $V = \bigoplus_{i=0}^l W_i$ 
and elements $\theta_0,\theta_1, \dots ,\theta_l \in R_d$ ($l>0$) 
so that $\theta_i - \theta_j$ is a unit whenever $i \neq j$. 
Put 
\[
   \Theta \coloneqq \bigoplus_{i=0}^l \theta_i\,\unit_{W_i \otimes R_d} \in \g_d(V),
\]
and let $\calO_\Theta \subset \g_d(V)$ be the $G_d(V)$-coadjoint orbit of $\Theta$. 

Hausel--Wong--Wyss proved that $\calO_\Theta$ is an example of quiver schemes. 
Let $(\sQ,\bd)$ be the quiver consisting of $l$ vertices $\{ 1,2, \dots , l\}$ 
and $(l-1)$ arrows $h_i \colon i \to i+1$, $i=1,2, \dots ,l-1$ 
with multiplicities 
\[
   d_i = d \quad (i=1,2, \dots ,l)
\]
for some positive integer $d$. 
We call it the \emph{$d$-leg of length $l$}. 
Define a graded $\C$-vector space $\bV = \bigoplus_{i=1}^l V_i$ by 
\[
   V_i \coloneqq \bigoplus_{j \geq i} W_i \quad (i=1,2, \dots ,l),
\]
and consider the symplectic vector space 
\[
   \bM_{\sQ,\bd}(\bV) \oplus 
   \Hom_\C(V,V_1 \otimes R_d) \oplus \Hom_\C(V_1 \otimes R_d,V)
\]
acted (diagonally) on by the group $G_{\bd}(\bV)$. 
An element $\bB$ of this space consists of $R_d$-homomorphisms 
\[  
   B_{i+1,i} \colon V_i \otimes R_d \to V_{i+1} \otimes R_d, \quad 
   B_{i,i+1} \colon V_{i+1} \otimes R_d \to V_i \otimes R_d 
   \quad (i=1,2, \dots ,l-1),
\]
together with $\C$-linear maps 
\[ 
   a \colon V \to V_1 \otimes R_d, \quad 
   b \colon V_1 \otimes R_d \to V.
\]
For such an element we put 
\[
   B_{1,0} \coloneqq a^{R_d} \colon V \otimes R_d \to V_1 \otimes R_d,
   \quad B_{0,1} \coloneqq b^{R_d} \colon V_1 \otimes R_d \to V \otimes R_d. 
\]
Observe that the $G_{\bd}(\bV)$-action is Hamiltonian with moment map 
\[
   \tilde{\mu}_\bV = (\tilde{\mu}_{\bV,i}), \quad 
   \tilde{\mu}_{\bV,i}(\bB) = B_{i,i-1} B_{i-1,i} - B_{i,i+1} B_{i+1,i} 
   \quad (i=1,2, \dots ,l), 
\]
where $B_{l,l+1}, B_{l+1,l}$ are understood to be zero.

\begin{prop}[Hausel--Wong--Wyss~{\cite[Proposition~6.3.4]{HWW}}]\label{prp:HWW}
   Define $\bmlam =(\lambda_i) \in R_{\bd}$ by 
   \[
      \lambda_i = \theta_i - \theta_{i-1}.
   \] 
   Then the map 
   \[
      \nu \colon \bM_{\sQ,\bd}(\bV) \oplus \Hom_\C(V,V_1 \otimes R_d) \oplus \Hom_\C(V_1 \otimes R_d,V) \to 
      \g_d(V); \quad \bB \mapsto -B_{0,1}B_{1,0} + \theta_0\,\unit_{V\otimes R_d}
   \]
   restricts to a principal $G_{\bd}(\bV)$-bundle  
   $\tilde{\mu}_{\bV}^{-1}(-\bmlam\,\unit_{\bV}) \to \calO_\Theta$, 
   via which $\calO_\Theta$ is a Hamiltonian reduction of 
   $\bM_{\sQ,\bd}(\bV) \oplus \Hom_\C(V,V_1 \otimes R_d) \oplus \Hom_\C(V_1 \otimes R_d,V)$.
\end{prop}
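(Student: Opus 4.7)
The plan is a three-step ``peeling'' along the $d$-leg, carrying out over $R_d$ the classical Crawley--Boevey-type construction that realizes semisimple $\gl_\C(V)$-orbits as type-$A$ quiver varieties. Set $V_0 \coloneqq V$, $B_{l,l+1} = B_{l+1,l} = 0$, and for each $i$ introduce the auxiliary element
\[
   X_i \coloneqq \theta_i\,\unit_{V_i\otimes R_d} - B_{i,i+1}B_{i+1,i} \in \g_d(V_i),
\]
so $X_l = \theta_l\,\unit_{V_l\otimes R_d}$ and $X_0 = \nu(\bB)$. The moment-map equation at vertex $i\geq 1$ is equivalent to $B_{i,i-1}B_{i-1,i} = \theta_{i-1}\,\unit_{V_i\otimes R_d} - X_i$, which together with the tautological identity $B_{i,i+1}B_{i+1,i} = \theta_i\,\unit - X_i$ drives the induction.

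\textbf{Step 1} (image lies in $\calO_\Theta$): Starting from $X_l = \theta_l\,\unit$ and descending, I show inductively that $X_i$ admits an $R_d$-module decomposition $V_i\otimes R_d = \bbW_i^{(i)}\oplus \cdots \oplus \bbW_l^{(i)}$ with each $\bbW_j^{(i)}$ a free $R_d$-module of rank $\dim W_j$ on which $X_i$ acts as $\theta_j\,\unit$. The induction step uses crucially that $\theta_i - \theta_j$ is a \emph{unit} of $R_d$ for $i \neq j$: Fitting's lemma then applies over the local ring $R_d$ to $\theta_i\,\unit - X_i = B_{i,i+1}B_{i+1,i}$, allowing one to peel off the $\theta_i$-eigenblock $\bbW_i^{(i)}$ and identify its complement with $V_{i+1}\otimes R_d$ via the restriction of $B_{i+1,i}$, inheriting the induction hypothesis. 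In particular $\nu(\bB) = X_0$ is $G_d(V)$-conjugate to $\Theta$.

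\textbf{Step 2} (principal bundle and Hamiltonian reduction): By $G_d(V)$-equivariance of $\nu$, to prove surjectivity it suffices to construct a preimage of $\Theta$: take $\bbW_j = W_j\otimes R_d$, let $B_{i+1,i}\colon V_i\otimes R_d = \bigoplus_{j\geq i}\bbW_j \to \bigoplus_{j > i}\bbW_j = V_{i+1}\otimes R_d$ be the projection killing $\bbW_i$, and let $B_{i,i+1}$ send $\bbW_j$ ($j > i$) into $V_i\otimes R_d$ by $(\theta_i - \theta_j)$ times the inclusion; a direct check verifies both the moment-map equations and $\nu(\bB) = \Theta$. For any two preimages $\bB, \bB'$ of a fixed $X \in \calO_\Theta$ the Fitting decompositions built in Step~1 depend only on $X$, so a unique element of $G_\bd(\bV)$ intertwines them and takes $\bB$ to $\bB'$; freeness of the action follows from the surjectivity of each $B_{i+1,i}$ onto $V_{i+1}\otimes R_d$ established in Step~1. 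Matching the reduced symplectic form with the Kirillov--Kostant--Souriau form on $\calO_\Theta$ is then read off from Remark~\ref{rem:loop-moment}: up to the constant shift $\theta_0\,\unit$, the map $\nu$ is itself a moment map for the natural $G_d(V)$-action on the ambient space, and reduction in stages identifies the two forms.

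\textbf{The main obstacle} is Step~1, where the recursive splitting must be performed over the local ring $R_d$ with nilpotents rather than over $\C$, so one cannot merely invoke eigenspace decompositions from linear algebra. The hypothesis that the differences $\theta_i - \theta_j$ are \emph{units} of $R_d$ (not merely non-zero) is exactly what makes Fitting's lemma applicable at each stage over $R_d$; once the recursion is set up correctly, the induction itself is essentially formal.
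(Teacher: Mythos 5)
Your Step 1 is correct and is a genuinely different, more self-contained route than the paper's for the first half of the argument: the paper first proves injectivity of $B_{i,i+1}$ and surjectivity of $B_{i+1,i}$ by specializing $\epsilon \mapsto 0$ and quoting \cite{CB} and \cite{YD4} for the $d=1$ case, and then works with the global compositions $B_{0,i}$, $B_{i,0}$ and the idempotent decomposition of $\prod_j(\theta_j\,\unit - A)$. Your descending induction instead extracts everything from the single observation that $B_{i+1,i}B_{i,i+1}=\theta_i\,\unit - X_{i+1}$ is invertible by the inductive hypothesis together with the unit condition; this simultaneously yields the splitting $V_i\otimes R_d=\Ker B_{i+1,i}\oplus\range B_{i,i+1}$, the needed injectivity/surjectivity, and the block structure of $X_i$. (The appeal to ``Fitting's lemma'' is really just this invertibility plus the idempotent $B_{i,i+1}(B_{i+1,i}B_{i,i+1})^{-1}B_{i+1,i}$, but the conclusion is right.) Your explicit preimage of $\Theta$ is, up to which factor carries $(\theta_i-\theta_j)$, the paper's base point $\bB^0$.

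There is, however, a gap in Step 2, at the sentence ``the Fitting decompositions built in Step~1 depend only on $X$.'' They do not: for $i\geq 1$ the summands are $\Ker B_{i+1,i}$ and $\range B_{i,i+1}$, which depend on $\bB$ and not only on $X=X_0$. As written, the transitivity of $G_{\bd}(\bV)$ on the fibres of $\nu$ --- the substance of the principal-bundle claim --- is therefore not established. What is true, and what you need, is that after transporting the decompositions down to $V\otimes R_d$ via the split injections $B_{i,i+1}$, the resulting submodules are $\bbV_i=\range\prod_{j=0}^{i-1}(\theta_j\,\unit-X)$ (use the intertwining relations $B_{i+1,i}X_i=X_{i+1}B_{i+1,i}$ and the surjectivity from Step~1), and these \emph{do} depend only on $X$; moreover under these embeddings $B_{i+1,i}$ and $B_{i,i+1}$ become $(-X+\theta_i\,\unit)|_{\bbV_i}$ and the inclusion $\bbV_{i+1}\hookrightarrow\bbV_i$ respectively. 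This is exactly the commutative diagram the paper writes down, and it is what produces the element of $G_{\bd}(\bV)$ carrying $\bB$ to $\bB'$ (its uniqueness, equivalently freeness, then follows from surjectivity of the $B_{i+1,i}$ as you say). A second, smaller omission: ``free and transitive on fibres'' is not yet ``principal bundle''; some form of local triviality must be checked, which the paper obtains by exhibiting the level set as the homogeneous space $\left( G_d(V)\times G_{\bd}(\bV)\right)/G_d(V)_\Theta$ with $\nu$ identified with the projection onto $G_d(V)/G_d(V)_\Theta$.
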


See e.g.\ \cite{VP} for the definition of geometric quotient. 
Because our convention and the statement are slightly different to those of \cite{HWW}, 
we will give a proof below (our proof is different to the proof of \cite{HWW}).

\begin{lem}
   Let $\bB \in \tilde{\mu}_{\bV}^{-1}(-\bmlam\,\unit_{\bV})$. 
   Then $B_{i,i+1}$ is injective and $B_{i+1,i}$ is surjective 
   for all $i=0,1, \dots ,l-1$.
\end{lem}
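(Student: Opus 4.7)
The plan is to prove injectivity of $B_{i,i+1}$ by forward-propagating a kernel element along the leg, and surjectivity of $B_{i+1,i}$ by the dual procedure applied to $\C$-linear functionals annihilating the image.

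For the injectivity, take $v\in\ker B_{i,i+1}$ and set $v_{i+1}:=v$, $v_{k+1}:=B_{k+1,k}v_k\in V_{k+1}\otimes R_d$ for $k=i+1,\ldots,l-1$. The moment map equation at vertex $i+1$ applied to $v$, combined with $B_{i,i+1}v=0$, yields $B_{i+1,i+2}v_{i+2}=\lambda_{i+1}v_{i+1}$. Inductively using the moment map equations at subsequent vertices, I expect to obtain
\[
B_{k,k+1}v_{k+1}\,=\,(\lambda_{i+1}+\cdots+\lambda_k)\,v_k\,=\,(\theta_k-\theta_i)\,v_k, \qquad k=i+1,\ldots,l-1.
\]
At the tip, the moment map reads $B_{l,l-1}B_{l-1,l}=-\lambda_l\,\unit$; applying it to $v_l$ and substituting the identity at $k=l-1$ should telescope (via $\lambda_{i+1}+\cdots+\lambda_l=\theta_l-\theta_i$) to $(\theta_l-\theta_i)\,v_l=0$. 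Since $\theta_l-\theta_i$ is a unit in $R_d$ by hypothesis, $v_l=0$; and then back-substitution through the chain (each $\theta_k-\theta_i$ with $k>i$ being a unit) forces $v_k=0$ for every $k$, whence $v=v_{i+1}=0$.

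For the surjectivity, I dualize: take $\phi\in(V_{i+1}\otimes R_d)^*$ with $\phi\circ B_{i+1,i}=0$, and set $\phi_{i+1}:=\phi$, $\phi_{k+1}:=\phi_k\circ B_{k,k+1}\in(V_{k+1}\otimes R_d)^*$. Endowing the $\C$-linear duals with the natural right $R_d$-action $(\phi\cdot r)(w):=\phi(rw)$, composition of the moment map equation at $i+1$ with $\phi_{i+1}$ on the left gives $\phi_{i+2}\circ B_{i+2,i+1}=\phi_{i+1}\cdot\lambda_{i+1}$. The analogous induction, using that every $B_{k,k\pm1}$ is $R_d$-linear, should produce
\[
\phi_{k+1}\circ B_{k+1,k}\,=\,\phi_k\cdot(\theta_k-\theta_i), \qquad k=i+1,\ldots,l-1,
\]
and the terminal moment map identity yields $\phi_l\cdot(\theta_l-\theta_i)=0$. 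Since multiplication by $\theta_l-\theta_i$ is a $\C$-linear automorphism of $V_l\otimes R_d$, this forces $\phi_l=0$; back-substitution through the chain then gives $\phi=0$, proving surjectivity of $B_{i+1,i}$.

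The principal obstacle is the careful bookkeeping of the inductive identity so that the telescoping at the terminal vertex collapses to precisely the unit $\theta_l-\theta_i$ rather than to some more general partial sum $\lambda_{i+1}+\cdots+\lambda_k$ which need not be a unit. Once the two telescopings (the primal and the dual) are verified, the conclusion follows immediately from the hypothesis that all pairwise differences $\theta_j-\theta_{j'}$ are units in $R_d$.
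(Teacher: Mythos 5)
Your proof is correct, and it takes a genuinely different route from the paper's. The paper specializes $\epsilon$ to $0$ to reduce to the multiplicity-free case $d=1$, and there invokes external results: Crawley-Boevey's description of the affine quotient as the closure of $\calO_\Theta$ together with the known criterion (from the theory of type-$A$ quiver varieties) that a point maps into the open orbit exactly when the $B_{i,i+1}$ are injective and the $B_{i+1,i}$ surjective, plus closedness of the semisimple orbit. Your argument instead works directly over $R_d$ for all $d$ at once: the forward-propagation identity $B_{k,k+1}v_{k+1}=(\theta_k-\theta_i)v_k$ follows by the induction you describe (the step uses that $B_{k+1,k}$ commutes with multiplication by the scalar $\theta_k-\theta_i$, which holds since all maps are $R_d$-linear, including $B_{1,0}=a^{R_d}$ and $B_{0,1}=b^{R_d}$), the terminal relation $B_{l,l-1}B_{l-1,l}=-\lambda_l\,\unit$ does collapse to $(\theta_l-\theta_i)v_l=0$ as you expect, and the dual telescoping for surjectivity goes through verbatim because $(\phi\cdot r)\circ B=(\phi\circ B)\cdot r$ for $R_d$-linear $B$. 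The degenerate case $i=l-1$ is just the empty chain and is handled by the tip relation alone. What each approach buys: the paper's is short modulo the citations and ties the lemma to the known geometry of closures of conjugacy classes; yours is elementary and self-contained, needs no specialization or Nakayama-type lifting from $\epsilon=0$, and in fact anticipates the very telescoping computation (the products $\prod_j(-A+\theta_j\,\unit)$ and the maps $B_{0,i}$, $B_{i,0}$) that the paper carries out immediately afterwards in the proof of Proposition~\ref{prp:HWW}, so the two could be merged.
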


\begin{proof}
   First consider the case of $d=1$. 
   In this case, \cite[Lemma~9.1]{CB} (with a different sign convention) 
   shows that the affine quotient 
   $\tilde{\mu}_{\bV}^{-1}(-\bmlam\,\unit_{\bV})/G_{\bd}(\bV)$ 
   is isomorphic to the closure of the orbit $\calO_\Theta \subset \gl_\C(V)$ 
   via the map $\nu$.
   Furthermore, it is known (see e.g.\ \cite[5.1.2, 5.1.4]{YD4}) 
   that the image of a point $\bB \in \tilde{\mu}_{\bV}^{-1}(-\bmlam\,\unit_{\bV})$ 
   lies in $\calO_\Theta$ if and only if 
   $B_{i,i+1}$ is injective and $B_{i+1,i}$ is surjective 
   for all $i=0,1, \dots ,l-1$. 
   Since $\calO_\Theta$ is closed ($\Theta$ is semisimple), 
   it follows that 
   $B_{i,i+1}$ is injective and $B_{i+1,i}$ is surjective 
   for all $\bB \in \tilde{\mu}_{\bV}^{-1}(-\bmlam\,\unit_{\bV})$ 
   and $i=0,1, \dots ,l-1$.

   Now consider the general case. 
   Take any $\bB \in \tilde{\mu}_{\bV}^{-1}(-\bmlam\,\unit_{\bV})$.
   Specializing $\epsilon$ to zero, 
   we then obtain $\C$-linear maps 
   \[  
      B_{i+1,i}(0) \colon V_i \to V_{i+1}, \quad 
      B_{i,i+1}(0) \colon V_{i+1} \to V_i 
      \quad (i=0,1, \dots ,l-1),
   \]
   where $V_0 \coloneqq V$, such that 
   \[
      B_{i,i-1}(0) B_{i-1,i}(0) - B_{i,i+1}(0) B_{i+1,i}(0) = -\lambda_i(0)\,\unit_{V_i}
      \quad (i=1,2, \dots ,l).
   \]
   Since $\theta_i(0) \neq \theta_j(0)$ whenver $i \neq j$, 
   the above fact in the case of $d=1$ shows that 
   $B_{i,i+1}(0)$ is injective and $B_{i+1,i}(0)$ is surjective  
   for all $i=0,1, \dots ,l-1$. 
   This implies that $B_{i,i+1}$ is injective and $B_{i+1,i}$ is surjective  
   for all $i=0,1, \dots ,l-1$. 
\end{proof}

\begin{proof}[Proof of Proposition~\ref{prp:HWW}]
   Using the above lemma one can easily check that $G_{\bd}(\bV)$ acts freely on 
   $\tilde{\mu}_{\bV}^{-1}(-\bmlam\,\unit_{\bV})$. 
   Hence $\tilde{\mu}_{\bV}^{-1}(-\bmlam\,\unit_{\bV})$ is non-singular 
   and equidimensional. 
   Also, note that the group $G_d(V)$ acts on the space 
   $\bM_{\sQ,\bd}(\bV) \oplus \Hom_\C(V,V_1 \otimes R_d) \oplus \Hom_\C(V_1 \otimes R_d,V)$
   in Hamiltonian fashion with moment map $\mu'$ (see Remark~\ref{rem:loop-moment}); 
   this action commutes with the $G_{\bd}(\bV)$-action and preserves $\tilde{\mu}_{\bV}^{-1}(-\bmlam\,\unit_{\bV})$.

   Now we take any $\bB \in \tilde{\mu}_{\bV}^{-1}(-\bmlam\,\unit_{\bV})$ and   
   show that $A \coloneqq \nu(\bB)$ lies in $\calO_\Theta$. 
   For $i=1,2, \dots ,l$, put 
   \[
      B_{0,i} = B_{0,1} B_{1,2} \cdots B_{i-1,i}, \quad 
      B_{i,0} = B_{i,i-1} \cdots B_{2,1} B_{1,0}.
   \]
   Then using the moment map relation iteratively one easily deduces 
   \begin{align*}
      \left(B_{0,1} B_{1,0} + (\lambda_1 + \cdots + \lambda_i)\unit_{V\otimes R_d} \right) B_{0,i} &= B_{0,i+1}B_{i+1,i} \quad (i=0,1, \dots ,l-1), \\
      \left(B_{0,1} B_{1,0} + (\lambda_1 + \cdots + \lambda_l)\unit_{V\otimes R_d} \right) B_{0,l} &= 0, 
   \end{align*}
   that is,
   \begin{equation}\label{eq:orbit1}
      \left(-A + \theta_i\,\unit_{V\otimes R_d} \right) B_{0,i} = B_{0,i+1}B_{i+1,i} 
      \quad (i=0,1, \dots ,l-1), \quad 
      \left(-A + \theta_l\,\unit_{V\otimes R_d} \right) B_{0,l} = 0.
   \end{equation}
   Thus for any $i=0,1, \dots ,l-1$, we have 
   \begin{align*}
      B_{0,i+1}B_{i+1,0} &= \left(-A + \theta_i \,\unit_{V\otimes R_d} \right) B_{0,i} B_{i,0} \\
      &= \left(-A + \theta_i \,\unit_{V\otimes R_d} \right)\left(-A + \theta_{i-1} \,\unit_{V\otimes R_d} \right) B_{0,i-1} B_{i-1,0} \\
      &= \cdots = \prod_{j=1}^i \left(-A + \theta_j\,\unit_{V\otimes R_d} \right) B_{0,1}B_{1,0} 
      = \prod_{j=0}^i \left(-A + \theta_j\,\unit_{V\otimes R_d} \right),
   \end{align*}
   and 
   \[
      \prod_{j=0}^l \left(-A + \theta_j\,\unit_{V\otimes R_d} \right) = 
      \left(-A + \theta_l\,\unit_{V\otimes R_d} \right) B_{0,l}B_{l,0} =0.
   \]
   By the above lemma, $B_{0,i}$ is injective and $B_{i,0}$ is surjective 
   for all $i=1,2, \dots ,l$. Hence 
   \begin{equation}\label{eq:orbit2}
      \range \prod_{j=0}^i \left(-A + \theta_j\,\unit_{V\otimes R_d} \right) \simeq V_{i+1} \otimes R_d 
      \quad (i=0,1, \dots ,l-1).
   \end{equation}
   Since $\theta_i - \theta_j \in R_d^\times$ ($i \neq j$) and $\prod_{j=0}^l \left( A - \theta_j\,\unit_{V\otimes R_d} \right) = 0$, 
   the idempotent decomposition  
   \[
      \unit_{V\otimes R_d} = \sum_{i=0}^l \pi_i, \quad 
      \pi_i \coloneqq \prod_{j \neq i} (\theta_i - \theta_j)^{-1} \prod_{j \neq i} (A - \theta_j\,\unit_{V\otimes R_d})
   \]
   gives a direct sum decomposition $V \otimes R_d = \bigoplus_{i=0}^l \Ker (A - \theta_i\,\unit_{V\otimes R_d})$, 
   and equalities~\eqref{eq:orbit2} show that $\Ker (A - \theta_i\,\unit_{V\otimes R_d}) \simeq W_i \otimes R_d$. 
   Hence $A \in \calO_\Theta$. Furthermore, if we put 
   \[
      \bbV_i \coloneqq \range \prod_{j=0}^{i-1} \left(-A + \theta_j\,\unit_{V\otimes R_d} \right) 
      = \range B_{0,i} 
      \quad (i=1,2, \dots ,l), 
   \]
   then equalities~\eqref{eq:orbit1} and the definition of $B_{0,i}$'s 
   yield the following commutative diagrams 
   for $i=0,1, \dots ,l-1$:
   \begin{equation}\label{eq:diagram}
      \vcenter{
      \xymatrix@C=4em{
         V_i \otimes R_d \ar[r]^-{B_{i+1,i}}\ar[d]_-{B_{0,i}}\ar@{}[rd]|{\circlearrowright} & V_{i+1} \otimes R_d \ar[d]^-{B_{0,i+1}}\\
         \bbV_i \ar[r]_-{-A+\theta_i\,\unit_{V\otimes R_d}} & \bbV_{i+1},
      }}
      \qquad 
      \vcenter{
      \xymatrix@C=4em{
         V_{i+1} \otimes R_d \ar[r]^-{B_{i,i+1}}\ar[d]_-{B_{0,i+1}}\ar@{}[rd]|{\circlearrowright} & V_i \otimes R_d \ar[d]^-{B_{0,i}}\\
         \bbV_{i+1} \ar[r]_-{\text{inclusion}} & \bbV_i.
      }}
   \end{equation}
   Here we use the conventions 
   $\bbV_0 = V \otimes R_d$, $V_0 = V$, $B_{0,0}=\unit_{V \otimes R_d}$ 
   in the case of $i=0$.
   Note that the vertical arrows are all isomorphisms, 
   and that $\bbV_i = V_i \otimes R_d$ for all $i$ if $A=\Theta$. 

   Define a point $\bB^0 \in \tilde{\mu}_{\bV}^{-1}(-\bmlam\,\unit_{\bV})$ so that  
   $B^0_{i,i+1} \colon V_{i+1} \otimes R_d \to V_i \otimes R_d$ are the inclusions 
   and 
   \[
     B^0_{i+1,i} = (-\Theta+\theta_i\,\unit_{V\otimes R_d})|_{V_i \otimes R_d} \colon V_i \otimes R_d \to V_{i+1} \otimes R_d
     \quad (i=0,1, \dots ,l-1).
   \]
   Since 
   $\nu \colon \tilde{\mu}_{\bV}^{-1}(-\bmlam\,\unit_{\bV}) \to \calO_\Theta$ 
   is $G_d(V)$-equivariant and $G_d(V)$ transitively acts on $\calO_\Theta$, 
   for any $\bB' \in \tilde{\mu}_{\bV}^{-1}(-\bmlam\,\unit_{\bV})$ 
   there exists $g \in G_d(V)$ such that $\nu(g \cdot \bB')=\Theta$, 
   and the vertical arrows of the above diagram for $\bB \coloneqq g \cdot \bB'$  
   define an element $(B_{0,i})_{i=1}^l$ of $G_{\bd}(\bV)$  
   sending $\bB$ to $\bB^0$. 
   It follows that $G_d(V) \times G_{\bd}(\bV)$ 
   transitively acts on 
   $\tilde{\mu}_{\bV}^{-1}(-\bmlam\,\unit_{\bV})$ and hence 
   \[
     \tilde{\mu}_{\bV}^{-1}(-\bmlam\,\unit_{\bV}) \simeq  
     \left( G_d(V) \times G_{\bd}(\bV) \right)/\Stab(\bB^0),
   \] 
   where $\Stab(\bB^0)$ is the stabilizer of $\bB^0$. 
   By a direct calculation, we see that $\Stab(\bB^0)$ consists of all the elements  
   $(g,(g_i))$ of $G_d(V) \times G_{\bd}(\bV)$ such that 
   \[
     g\Theta g^{-1}=\Theta,\quad g|_{V_i \otimes R_d} = g_i \quad (i=0,1, \dots ,l).
   \]
   Since any element of the centralizer $G_d(V)_\Theta \simeq \prod_{i=0}^l G_d(W_i)$ of $\Theta$ 
   preserves each $V_i \otimes R_d$,  
   the group $\Stab(\bB^0)$ is isomorphic to $G_d(V)_\Theta$ 
   via the map  
   \[
     G_d(V)_\Theta \to G_d(V) \times G_{\bd}(\bV), \quad 
     g \mapsto (g,(g|_{\bbV_i})).
   \]
   Thus $\tilde{\mu}_{\bV}^{-1}(-\bmlam\,\unit_{\bV})$  
   is a (geometric) quotient of $G_d(V) \times G_{\bd}(\bV)$ by the action of 
   $G_d(V)_\Theta$ defined by the above map and $\nu$ is identified with the first projection 
   \[
      \tilde{\mu}_{\bV}^{-1}(-\bmlam\,\unit_{\bV})
      =\left( G_d(V) \times G_{\bd}(\bV) \right)/G_d(V)_\Theta 
      \to G_d(V)/G_d(V)_\Theta = \calO_\Theta, 
   \]
   which is a principal $G_{\bd}(\bV)$-bundle. 
   Hence $\calO_\Theta = \tilde{\mu}_{\bV}^{-1}(-\bmlam\,\unit_{\bV})/G_{\bd}(\bV)$ 
   carries a symplectic structure as a Hamiltonian reduction of 
   $\bM_{\sQ,\bd}(\bV) \oplus \Hom_\C(V,V_1 \otimes R_d) \oplus \Hom_\C(V_1 \otimes R_d,V)$. 
   However it coincides with the Kirillov--Kostant--Souriau symplectic structure 
   since $\nu$ is a moment map and hence preserves the Poisson structure.
\end{proof}

In fact, the geometric quotient in 
Proposition~\ref{prp:HWW} is an example of quiver schemes. 
Let $(\widetilde{\sQ},\tilde{\bd})$ be the quiver with multiplicities 
obtained from $(\sQ,\bd)$ by 
adding a new vertex $0$ of multiplicity $1$ and $\dim V$ arrows from $0$ to $1$. 
Define a graded $\C$-vector space $\widetilde{\bV} = \bigoplus_{i=0}^m \widetilde{V}_i$ 
by $\widetilde{V}_0 \coloneqq \C$, $\widetilde{V}_i \coloneqq V_i$ ($i>0$).
Then fixing a linear isomorphism $V \simeq \C^{\dim V}$, we have  
\begin{align*}
  \bM_{\widetilde{\sQ},\tilde{\bd}}(\widetilde{\bV}) 
  &= \bM_{\sQ,\bd}(\bV) \oplus \bigoplus_{i=1}^{\dim V} \left( \Hom_\C(\C,V_1 \otimes R_d) \oplus \Hom_\C(V_1 \otimes R_d,\C) \right) \\
  &\simeq \bM_{\sQ,\bd}(\bV) \oplus \Hom_\C(V,V_1 \otimes R_d) \oplus \Hom_\C(V_1 \otimes R_d,V).
\end{align*}
Also, the moment map $\mu_{\widetilde{\bV}}$ is described as 
\[
  \mu_{\widetilde{\bV},0}(\bB) = - \tr B_{0,1}B_{1,0}, \quad 
  \mu_{\widetilde{\bV},i}(\bB) = B_{i,i-1}B_{i-1,i} - B_{i,i+1}B_{i+1,i} = \tilde{\mu}_{\bV,i}(\bB) 
  \quad (i>0).
\]
Let $\bmlam$ be as in Proposition~\ref{prp:HWW} and 
define $\tilde{\bmlam}=(\tilde{\lambda}_i) \in R_{\tilde{\bd}}$ by 
\[
   \tilde{\lambda}_i = \lambda_i \quad (i>0), \quad 
   \tilde{\lambda}_0 = - \frac{1}{\dim V} \sum_{i>0} v_i \res_{\epsilon_i =0} \left( \lambda_i\,\frac{\mathrm{d}\epsilon_i}{\epsilon_i^{d_i}} \right),
\]
so that \eqref{eq:level} holds. 
Then any $\bB \in \bM_{\widetilde{\sQ},\tilde{\bd}}(\widetilde{\bV})$ 
satisfying $\tilde{\mu}_{\bV}(\bB)=-\bmlam\,\unit_{\bV}$ 
also satisfies $\mu_{\widetilde{\bV},0}(\bB)=-\lambda_0$ as $\mu_{\widetilde{\bV}}(\bB)$ 
lives in $\g_{\tilde{\bd}}(\widetilde{\bV})_0$ (see Remark~\ref{rem:value}).  
Thus 
\[
  \mu_{\widetilde{\bV}}^{-1}(-\tilde{\bmlam}\,\unit_{\widetilde{\bV}}) 
  = \tilde{\mu}_{\bV}^{-1}(-\bmlam\,\unit_{\bV}). 
\] 
Furthermore, since 
$\C^\times \unit_{\widetilde{\bV}} \subset G_{\tilde{\bd}}(\widetilde{\bV})=\C^\times \times G_{\bd}(\bV)$ 
acts trivially, we have 
\[
  \C[\,\mu_{\widetilde{\bV}}^{-1}(-\tilde{\bmlam}\,\unit_{\tilde{\bV}})\,]^{G_{\tilde{\bd}}(\widetilde{\bV})} 
  = \C[\,\tilde{\mu}_{\bV}^{-1}(-\bmlam\,\unit_{\bV})\,]^{G_{\bd}(\bV)}.
\]

\begin{cor}\label{crl:orbit}
   The orbit $\calO_\Theta$ is isomorphic to the quiver scheme 
   $\QS_{\widetilde{\sQ},\tilde{\bd}}(\tilde{\bmlam},\tilde{\bv})$.
\end{cor}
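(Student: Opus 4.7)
The plan is to chain together two identifications, one essentially carried out in the paragraphs immediately preceding the corollary and the other being Proposition~\ref{prp:HWW}. By the definition of quiver schemes,
\[
\QS_{\widetilde{\sQ},\tilde{\bd}}(\tilde{\bmlam},\tilde{\bv})
= \spec \left( \C[\,\mu_{\widetilde{\bV}}^{-1}(-\tilde{\bmlam}\,\unit_{\widetilde{\bV}})\,]^{G_{\tilde{\bd}}(\widetilde{\bV})} \right).
\]
The discussion preceding the corollary already identifies $\bM_{\widetilde{\sQ},\tilde{\bd}}(\widetilde{\bV})$ with $\bM_{\sQ,\bd}(\bV) \oplus \Hom_\C(V,V_1 \otimes R_d) \oplus \Hom_\C(V_1 \otimes R_d,V)$, checks via Remark~\ref{rem:value} that $\mu_{\widetilde{\bV}}^{-1}(-\tilde{\bmlam}\,\unit_{\widetilde{\bV}}) = \tilde{\mu}_{\bV}^{-1}(-\bmlam\,\unit_{\bV})$, and observes that the diagonal $\C^\times \unit_{\widetilde{\bV}}$ acts trivially so that $G_{\tilde{\bd}}(\widetilde{\bV})$-invariants coincide with $G_{\bd}(\bV)$-invariants. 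So it suffices to exhibit an isomorphism of affine schemes
\[
\spec \left( \C[\,\tilde{\mu}_{\bV}^{-1}(-\bmlam\,\unit_{\bV})\,]^{G_{\bd}(\bV)} \right) \simeq \calO_\Theta.
\]

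For this I would invoke Proposition~\ref{prp:HWW}, which gives a principal $G_{\bd}(\bV)$-bundle $\nu \colon \tilde{\mu}_{\bV}^{-1}(-\bmlam\,\unit_{\bV}) \to \calO_\Theta$. Since $\Theta$ is semisimple (the elements $\theta_i - \theta_j$ with $i \neq j$ are units in $R_d$), the orbit $\calO_\Theta \subset \g_d(V)$ is closed and hence affine. Pulling back along $\nu$ therefore induces an injection $\nu^* \colon \C[\calO_\Theta] \hookrightarrow \C[\tilde{\mu}_{\bV}^{-1}(-\bmlam\,\unit_{\bV})]^{G_{\bd}(\bV)}$, and because $\nu$ is a geometric quotient onto an affine base, every $G_{\bd}(\bV)$-invariant regular function descends, giving surjectivity. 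Taking $\spec$ yields the desired isomorphism.

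The only subtlety is the passage from ``$\nu$ is a principal bundle, hence a geometric quotient'' to the affine-GIT statement that $\calO_\Theta = \spec$ of the invariant ring; this is standard once one knows that the base is affine and the action is free, and is covered by the reference \cite{VP} already cited after Proposition~\ref{prp:HWW}. Beyond unwinding the notation introduced in the paragraphs just above the corollary, I do not anticipate any further obstacle.
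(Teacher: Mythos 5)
Your proof follows essentially the same route as the paper's: both reduce the corollary to Proposition~\ref{prp:HWW} together with the affineness of $\calO_\Theta$, using exactly the identifications made in the paragraphs preceding the statement, and then conclude via $\spec\C[\calO_\Theta]=\calO_\Theta$. The one step you should tighten is your justification of affineness: you assert that $\calO_\Theta$ is closed ``since $\Theta$ is semisimple,'' but the standard fact that orbits of semisimple elements are closed is a statement about reductive groups, whereas $G_d(V)$ is not reductive for $d>1$ (it has the nontrivial unipotent radical $B_d(V)$). The closedness does in fact hold here --- the proof of Proposition~\ref{prp:HWW} shows that $\calO_\Theta$ coincides with the set of $A\in\g_d(V)$ satisfying $\prod_{j}(A-\theta_j\,\unit_{V\otimes R_d})=0$ and $\dim_\C\Ker(A-\theta_i\,\unit_{V\otimes R_d})\ge d\dim W_i$ for all $i$, which is a closed condition --- but this requires an argument rather than a one-line appeal to semisimplicity. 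The paper avoids the issue by quoting the affineness of $\calO_\Theta$ directly from \cite[Lemma~2.2.4]{HWW} or from Corollary~\ref{crl:affine} (proved later via the shifting trick), which is all that is needed. With that step repaired or replaced by the citation, your argument is correct and matches the paper's.
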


\begin{proof}
   Proposition~\ref{prp:HWW} implies that 
   $\QS_{\widetilde{\sQ},\tilde{\bd}}(\tilde{\bmlam},\tilde{\bv})$ 
   is isomorphic to the affinization $\spec \C[\calO_\Theta]$ of $\calO_\Theta$. 
   On the other hand, $\calO_\Theta$ is known to be affine 
   (see \cite[Lemma~2.2.4]{HWW} or Corollary~\ref{crl:affine}).
   Thus $\spec \C[\calO_\Theta]=\calO_\Theta$.
\end{proof}

\section{Reflection functors for quiver schemes}\label{section:reflection}

In this section, 
we modify the arguments made in \cite[Section~4]{YD1} 
to generalize the reflection functors of Lusztig~\cite{Lus}, Maffei~\cite{Maf} and Nakajima~\cite{NH03} 
for quiver schemes. 
Fix a quiver with multiplicities $(\sQ,\bd)$ with $\sQ$ having no edge-loops.

\subsection{Reflection functors}

Let $\bC$, $\bD$ be the symmetrizable generalized Cartan matrix and the symmetrizer 
defined in Remark~\ref{rem:dim}.  
Fix a realization $(\h, \{\alpha_i\}_{i\in I}, \{\alpha^{\lor}_i\}_{i\in I})$ 
of $\bC$ in the sense of \cite{Kac}; 
so $\h$ is the Cartan subalgebra, $\{ \alpha_i \}_{i \in I} \subset \h^*$ 
is the set of simple roots, and 
$\{\alpha^{\lor}_i\}_{i\in I} \subset \h$ is the set of simple coroots.
Let $Q \coloneqq \sum_{i\in I} \Z \alpha_i$ be the root lattice 
and identify it with $\Z^I$ using the basis $\{ \alpha_i \}_{i \in I}$. 
Then the dimension vectors of finite dimensional $I$-graded $\C$-vector spaces 
live in the subset $Q_{+}\coloneqq \sum_{i\in I} \Z_{\geq0}\alpha_i$.

Recall that the Weyl group $W(\bC)$ of $\bC$ 
is the subgroup of $\GL_\C(\h^{*})$ generated by the simple reflections
\[
   s_i \colon \h^{*}\to \h^{*};\quad
   \bmlam \mapsto \bmlam -\braket{\bmlam,\alpha_i^{\lor}} \alpha_i
   \quad (i \in I).
\]
The group $W(\bC)$ is a Coxeter group with defining relations
\begin{equation}\label{eq:coxeter}
   s_i^2=\unit_{\h^*},\ (s_is_j)^{m_{ij}}=\unit_{\h^*}\quad (i,j\in I,\ i \neq j),
\end{equation}
where $m_{ij}$ are determined from $c_{ij}c_{ji}$ by the following table.
\begin{table}[H]
   \begin{center}
      \begin{tabular}{|c|c|c|c|c|c|}  \hline
         $c_{ij}c_{ji}$ & 0 & 1 & 2 & 3 & $\geq 4$ \\\hline
         $m_{ij}$       & 2 & 3 & 4 & 6 & $\infty$ \\\hline
      \end{tabular}
   \end{center}
\end{table}

We define a $W(\bC)$-action on $R_{\bd} \times Q$.
The action on the second component $Q$ is 
just the restriction of the action on $\h^*$; explicitly, 
\[
   s_i(\bv) = \bv-\sum_{j\in I}c_{ij}v_j\alpha_i, 
   \quad \bv = \sum_{j \in I} v_j \alpha_j \in Q
   \quad (i \in I).
\]
This action is effective; so we may regard $W(\bC)$ as a subgroup of $\GL_\Z(Q)$.
On the other hand, the action on the first component $R_{\bd}$ is defined by
\[
   r_i(\bmlam)=(r_i(\bmlam)_j)_{j\in I},\quad
   r_i(\bmlam)_j\coloneqq \begin{cases}
      -\lambda_i                                                                                                          & (j=i),     \\
      \lambda_j -\sum_{l=0}^{d_{ij}-1}\lambda_{i,(d_i-\frac{d_i}{d_{ij}}l-1)}c_{ij}\epsilon_j^{d_j-\frac{d_j}{d_{ij}}l-1} & (j\neq i),
   \end{cases}
\]
where $\bmlam =(\lambda_i)_{i \in I} \in R_\bd$, $\lambda_i = \sum_{k=0}^{d_i-1} \lambda_{i,k} \epsilon_i^k$.

\begin{prop}\label{prop:reflection}
   The above $r_i, i\in I$ satisfy relations $\eqref{eq:coxeter}$.
\end{prop}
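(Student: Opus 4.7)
The plan is to verify the two families of Coxeter relations by direct calculation on components. First, the involution relation $r_i^2 = \unit_{R_{\bd}}$ is immediate: on the $i$-th component $r_i$ negates $\lambda_i$ so squaring restores it, while for $j \neq i$ the two corrections to $\lambda_j$ (each a linear function of the current $\lambda_i$) cancel since $\lambda_i$ has flipped sign between the two applications of $r_i$.

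For the braid relations $(r_ir_j)^{m_{ij}} = \unit_{R_{\bd}}$ with $i \neq j$, I will introduce the shorthand
\[
   \tau_{ij}\colon R_{d_i} \to R_{d_j}, \quad
   \tau_{ij}(f)\coloneqq \sum_{l=0}^{d_{ij}-1} f_{d_i-\frac{d_i}{d_{ij}}l-1}\,\epsilon_j^{d_j-\frac{d_j}{d_{ij}}l-1},
\]
so that $r_i(\bmlam)_j = \lambda_j - c_{ij}\,\tau_{ij}(\lambda_i)$ for $j \neq i$. A short direct computation yields the key identity $\tau_{ji}\tau_{ij}\tau_{ji} = \tau_{ji}$, whence $\tau_{ji}\tau_{ij}$ is an idempotent on $R_{d_i}$. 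Observe that the action of $r_i, r_j$ on each $\lambda_k$ for $k \notin \{i,j\}$ depends only on the current $\lambda_i, \lambda_j$ via the maps $\tau_{ik}, \tau_{jk}$, so the verification splits into a rank-two part on $R_{d_i} \oplus R_{d_j}$ and a ``cocycle'' part for each remaining vertex $k$.

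I then do a case analysis on $c_{ij}c_{ji} \in \{0,1,2,3\}$ corresponding to $m_{ij} \in \{2, 3, 4, 6\}$. The factorization $c_{ij}c_{ji} = a_{ij}^2\,d_id_j/d_{ij}^2$ imposes strong arithmetic constraints on the triple $(d_i, d_j, d_{ij})$: for instance, $c_{ij}c_{ji} = 1$ forces $d_i = d_j = d_{ij}$, so that $\tau_{ji}\tau_{ij} = \unit_{R_{d_i}}$; while $c_{ij}c_{ji} \in \{2,3\}$ forces $\{d_i/d_{ij},\, d_j/d_{ij}\} = \{1,\, c_{ij}c_{ji}\}$, yielding an explicit description of $\tau_{ji}\tau_{ij}$ as a ``partial parity projection''. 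Using these constraints, I compute the iterates $(r_ir_j)^n(\lambda_i, \lambda_j)$ inductively on $R_{d_i} \oplus R_{d_j}$ and check periodicity with period $m_{ij}$, and in parallel verify by telescoping that the accumulated correction to each $\lambda_k$ vanishes after $m_{ij}$ iterations. The main obstacle is the case $c_{ij}c_{ji} = 3$ (i.e.\ $m_{ij} = 6$, type $G_2$), where both the rank-two iteration and the telescoping argument for the other components require delicate bookkeeping of the $\tau$-operators; nevertheless, the arithmetic rigidity of $(d_i, d_j, d_{ij})$ should make the relevant expressions collapse.
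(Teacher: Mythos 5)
Your route is genuinely different from the paper's. The paper avoids all case analysis: it computes the transpose $\tilde{s}_i$ of $r_i$ with respect to the residue pairing on $R_{\bd}$, identifies $\tilde{s}_i$ with the product of the $d_i$ pairwise commuting simple reflections $s_{i,0},\dots,s_{i,d_i-1}$ of an auxiliary Weyl group $W(\widetilde{\bC})$ attached to the unfolded index set $\tilde{I}=\{(i,k)\mid i\in I,\ 0\le k<d_i\}$, and then quotes the defining relations of $W(\widetilde{\bC})$. Your proposal instead works directly on $R_{\bd}$, and its stated ingredients are all correct: $r_i^2=\unit$ is immediate; $\tau_{ji}\tau_{ij}$ is the coordinate projection of $R_{d_i}$ onto the span of the $\epsilon_i^{d_i-1-f_{ji}l}$ with $0\le l<d_{ij}$, so $\tau_{ji}\tau_{ij}\tau_{ji}=\tau_{ji}$ does hold; the split into a closed rank-two system plus telescoping corrections is valid, since if the sums of the $\lambda_i$-values (resp.\ $\lambda_j$-values) fed into the successive applications of $r_i$ (resp.\ $r_j$) vanish in $R_{d_i}$ (resp.\ $R_{d_j}$) over one period, then the accumulated correction to every other $\lambda_k$ vanishes whatever $\tau_{ik},\tau_{jk}$ are; and the arithmetic constraints coming from $c_{ij}c_{ji}=a_{ij}^2 f_{ij}f_{ji}$ are as you state. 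What you gain is an elementary, self-contained argument with an explicit description of the rank-two orbit; what the paper gains is that the braid relations come for free from the Coxeter presentation of a known Weyl group, with no rank-two computation at all.

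The gap is that the decisive computations are not actually performed. The case $c_{ij}c_{ji}=2$ does close quickly: normalizing $f_{ij}=2$, $f_{ji}=1$ and writing $a=\tau_{ij}$, $b=\tau_{ji}$, $P=ab$ (so $ba=\unit$ and $P^2=P$), one finds $(r_ir_j)^2(\lambda_i,\lambda_j)=(-\lambda_i,(1-2P)\lambda_j)$, whence period four, and both telescoping sums vanish. But for $c_{ij}c_{ji}=3$ you only assert that the expressions ``should collapse''; that twelve-step $G_2$ computation, together with its two telescoping sums, is exactly the part of a direct verification where errors hide, and the proof is not complete until it is written out. If you want to shortcut it, observe that your $\tau$-calculus is precisely the coordinate form of the paper's folding: interpreting the transpose of $r_i$ as a product of commuting simple reflections indexed by $\{(i,k)\}_k$ in $W(\widetilde{\bC})$ disposes of all four cases simultaneously.
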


\begin{proof}
   For each $i \in I$, we define a map $\tilde{s}_i \colon R_\bd \to R_\bd$ by 
   \[
      \tilde{s}_i(\bmkap) = \bmkap - \sum_{j \in I} c_{ij} \sum_{m=0}^{d_{ij}-1} \kappa_{j,f_{ij}m} \epsilon_i^{f_{ji}m} \bfe_i
      \quad \left(\bmkap =\left(\sum \kappa_{i,k} \epsilon_i^k \right) \in R_\bd\right),
   \]
   where $\bfe_i \coloneqq (\delta_{ij})_{j \in I}$. 
   Then we have
   \begin{flalign*}
      \sum_{j\in I}\langle \lambda_j, \tilde{s}_i(\bmkap)_j\rangle_{d_j}&=
      \res_{\epsilon_i =0}\left(\lambda_i\left(\kappa_i-\sum_{j\in I}c_{ij}\sum_{m=0}^{d_{ij}-1}\kappa_{j,f_{ij}m}\epsilon_i^{f_{ji}m}\right)\frac{\mathrm{d}\epsilon_i}{\epsilon_i^{d_i}}\right)+\sum_{j\neq i}\res_{\epsilon_j =0}\left(\left(\lambda_j\kappa_j\right)\frac{\mathrm{d}\epsilon_j}{\epsilon_j^{d_j}}\right)\\
      &=-\res_{\epsilon_i =0}\left(\left(\lambda_i\kappa_i\right)\frac{\mathrm{d}\epsilon_i}{\epsilon_i^{d_i}}\right)-\sum_{j\neq i}c_{ij}\sum_{m=0}^{d_{ij}-1}\lambda_{i,(d_i-f_{ji}m-1)}\kappa_{j,f_{ij}m}+\sum_{j\neq i}\res_{\epsilon_j =0}\left(\left(\lambda_j\kappa_j\right)\frac{\mathrm{d}\epsilon_j}{\epsilon_j^{d_j}}\right)\\
      &=-\res_{\epsilon_i =0}\left(\left(\lambda_i\kappa_i\right)\frac{\mathrm{d}\epsilon_i}{\epsilon_i^{d_i}}\right)+\res_{\epsilon_j =0}\left(\left(\lambda_j-\sum_{m=0}^{d_{ij}-1}\lambda_{i,(d_i-f_{ji}m-1)}\right)\kappa_j\frac{\mathrm{d}\epsilon_j}{\epsilon_j^{d_j}}\right)\\
      &=\sum_{j\in I}\langle  r_i(\bmlam)_j,\kappa_j\rangle_{d_j}.
   \end{flalign*}
   Hence the transpose of $r_i$ is equal to $\tilde{s}_i$.
   Put $\tilde{I} = \{\, (i,k) \mid i \in I,\ k=0,1, \dots ,d_i-1\,\}$ and 
   define a matrix $\widetilde{\bC} = \left( \tilde{c}_{(i,k)(j,l)} \right) \in \C^{\tilde{I} \times \tilde{I}}$
   so that 
   \[
      \tilde{s}_i(\bmkap)_{i,k} = \kappa_{i,k} - \sum_{(j,l) \in \tilde{I}} \tilde{c}_{(i,k)(j,l)} \kappa_{j,l};
   \]
   explicitly, 
   \[
      \tilde{c}_{(i,k)(j,l)} = 
      \begin{cases}
         c_{ij} & (\text{$k=f_{ji} m$, $l=f_{ij} m$ for some $m \in \Z$}), \\
         0 & (\text{otherwise}).
      \end{cases}
   \]
   Then the matrix $\widetilde{\bC}$ is a symmetrizable generalized Cartan matrix 
   with symmetrizer $\widetilde{\bD} = \diag (\tilde{d}_{i,k})$, where 
   $\tilde{d}_{i,k} \coloneqq d_i$. 
   For $(i,k) \in \tilde{I}$, let $s_{i,k} \colon \Z^{\tilde{I}} \to \Z^{\tilde{I}}$ 
   be the $(i,k)$-th simple reflection acting on the root lattice $\Z^{\tilde{I}}$ for $\widetilde{\bC}$.
   Then for any $i \in I$, the reflections $s_{i,0}, s_{i,1}, \dots ,s_{i,d_i-1}$ commute pairwise, 
   and $\tilde{s}_i$ coincides with the linear map
   \[
      (s_{i,0} s_{i,1} \cdots s_{i,d_i-1}) \otimes_\Z \unit_\C  \colon \C^{\tilde{I}} \to \C^{\tilde{I}}
   \] 
   under the obvious identification $\C^{\tilde{I}}=R_\bd$. 
   Now relations $\eqref{eq:coxeter}$ follow from the defining relations for the Weyl group $W(\widetilde{\bC})$.
\end{proof}

\begin{rem}\label{rem:trans}
   Define a linear map $\rho \colon R_\bd \to \C^I$ by 
   \[
      \rho \colon (\lambda_i)_{i \in I} \mapsto \left( \res_{\epsilon_i =0} \left( \lambda_i\,\frac{\mathrm{d}\epsilon_i}{\epsilon_i^{d_i}} \right)\right)_{i \in I}.
   \]
   Then one can easily check that $\rho (r_i(\bmlam)) = {}^t s_i (\rho (\bmlam))$ for all $i \in I$.
   In particular, if $d_i=1$ for all $i \in I$, then 
   the $W(\bC)$-action on $R_\bd = \C^I$ is dual to that on $Q \otimes_\Z \C$.
\end{rem}

\begin{exa}
   (i) Suppose that $(\sQ,\bd)$ has the graph with multiplicities given below
   \[
      \hfill
      \begin{xy}
         (0,0)*++!D{d}*++!U{j} *\cir<4pt>{};
         (10,0)*++!D{1}*++!U{i} *\cir<4pt>{};
         (20,0) *++!D{1}*++!U{k} *\cir<4pt>{};
         \ar@{-} (2,0);(8,0)
         \ar@{-} (12,0);(18,0)
      \end{xy}
      \hfill
   \]
   Here we assume $d >1$.
   Then the corresponding generalized Cartan matrix is
   \[
      2\unit -
      \begin{pmatrix}
         0&1&1\\
         1&0&0\\
         1&0&0\\
      \end{pmatrix}
      \begin{pmatrix}
         1&0&0\\
         0&d&0\\
         0&0&1\\
      \end{pmatrix}
      =
   \begin{pmatrix}
   2&-d&-1\\
   -1&2&0\\
   -1&0&2
   \end{pmatrix}.
   \]
   
   We have
   \[ r_i(\bmlam)_i=-\lambda_i,\quad
   r_i(\bmlam)_j=\lambda_i-c_{ij}\lambda_i\epsilon_j^{d-1}=\lambda_j+d\lambda_i\epsilon_j^{d-1},\quad
  r_i(\bmlam)_k=\lambda_k-c_{ik}\lambda_i =\lambda_i+\lambda_k.  \]
     It coincides with the ones in \cite[Section 4]{YD1}.
   In general, if $\gcd(d_i,d_j)=1$ for all $j\in I$ joining the vertex $i$, then the action coincides with the action in \cite[Section 4]{YD1}.

   (ii) Suppose that $(\sQ,\bd)$ has the graph with multiplicities given below
   \[
      \hfill
      \begin{xy}
        (0,0)*++!D{d}*++!U{j} *\cir<4pt>{};
         (10,0)*++!D{d}*++!U{i} *\cir<4pt>{};
        (20,0) *++!D{1}*++!U{k} *\cir<4pt>{};
        \ar@{-} (2,0);(8,0)   
        \ar@{-} (12,0);(18,0) 
      \end{xy}
      \hfill
      \]
      Here we assume $d >1$.
      Then the Cartan matrix of it is
      \[  2\unit -
      \begin{pmatrix}
         0&\frac{1}{d}&1\\
         \frac{1}{d}&0&0\\
         1&0&0\\
      \end{pmatrix}
      \begin{pmatrix}
         d&0&0\\
         0&d&0\\
         0&0&1\\
      \end{pmatrix}
      =
        \begin{pmatrix}
         2&-1&-1\\
         -1&2&0\\
         -d&0&2
        \end{pmatrix} .
      \]
      We have 
      \[
      r_i(\bmlam)_i=-\lambda_i,\quad
      r_i(\bmlam)_j=\lambda_j -c_{ij}\lambda_i=\lambda_j+\lambda_i,\quad
       r_i(\bmlam)_k =\lambda_k -c_{ik}\lambda_{i,d-1}=\lambda_k+\lambda_{i,d-1}.
      \]
\end{exa}

\begin{thm}\label{thm:reflection}
   Suppose that $\sQ$ has no edge-loops.  
   Take $\bmlam \in R_{\bd}$, $\bv \in Q_+$, $i \in I$ and 
   suppose that $\lambda_i \in R_{d_i}$ is a unit.
   Then there exists an isomorphism of $\C$-schemes
   \[
      \mathcal{F}_i \colon \QS_{\sQ,\bd}(\bmlam,\bv) \xrightarrow{\sim} \QS_{\sQ,\bd}(r_i(\bmlam),s_i(\bv)).
   \]
\end{thm}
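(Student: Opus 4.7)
The plan is to adapt, in the setting of quiver schemes, the construction of reflection functors for quiver varieties with multiplicities in \cite[Section 4]{YD1}, which itself is a variant of Maffei's reformulation of Nakajima's construction. Fix a vertex $i$, and for $\bB \in \bM_{\sQ,\bd}(\bV)$ form the auxiliary free $R_{d_i}$-module
\[
   \mathbb{W}_i \coloneqq \bigoplus_{h \in H,\, t(h) = i} \bbV_{s(h)} \otimes_{R_{d_h}} R_{d_i}
\]
together with the $R_{d_i}$-linear maps
\[
   B_{\shortleftarrow i} \coloneqq \bigl(\sgn(h)\, B_h^{R_{d_i}}\bigr)_{h} \colon \mathbb{W}_i \to \bbV_i,
   \qquad
   B_{i \shortleftarrow} \coloneqq \bigl(B_{\ov h}^{R_{d_i}}\bigr)_{h} \colon \bbV_i \to \mathbb{W}_i.
\]
By the base-change bijections used in Section~\ref{subsec:pre}, the pair $\BB$ carries exactly the same information as the family $(B_h,B_{\ov h})_{t(h)=i}$, and the mesh relation at vertex $i$ becomes the single equation $B_{\shortleftarrow i} B_{i \shortleftarrow} = -\lambda_i\,\unit_{\bbV_i}$.

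Since $\lambda_i$ is assumed to be a unit of $R_{d_i}$, this composition is an automorphism, so $B_{\shortleftarrow i}$ is a split epi and $B_{i \shortleftarrow}$ is a split mono as $R_{d_i}$-module maps. Hence $\bbV_i' \coloneqq \ker B_{\shortleftarrow i}$ is a direct summand of $\mathbb{W}_i$, free of $R_{d_i}$-rank
\[
   \rank_{R_{d_i}}\mathbb{W}_i - v_i = \sum_{j \neq i} a_{ij} f_{ij} v_j - v_i = s_i(\bv)_i,
\]
by the formulas of Remark~\ref{rem:dim}. Choose a $\C$-vector space $V_i'$ of dimension $s_i(\bv)_i$ together with an $R_{d_i}$-isomorphism $V_i' \otimes R_{d_i} \simeq \bbV_i'$, and write $\bV'$ for $\bV$ with $V_i$ replaced by $V_i'$. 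Define
\[
   B_{i \shortleftarrow}^{\mathrm{new}} \colon \bbV_i' \hookrightarrow \mathbb{W}_i,
   \qquad
   B_{\shortleftarrow i}^{\mathrm{new}} \colon \mathbb{W}_i \to \bbV_i',\quad x \mapsto \lambda_i x + B_{i \shortleftarrow} B_{\shortleftarrow i}(x),
\]
the latter indeed landing in $\bbV_i'$ by a direct application of the mesh relation. Decomposing these back through the base-change bijections yields new $R_{d_h}$-linear maps $B_h^{\mathrm{new}}, B_{\ov h}^{\mathrm{new}}$ for each arrow $h$ incident at $i$; set $B_h^{\mathrm{new}} = B_h$ for every other arrow.

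One then verifies the mesh relations of $\bB^{\mathrm{new}}$ for the parameter $r_i(\bmlam)$. At vertex $i$ they reduce to $B_{\shortleftarrow i}^{\mathrm{new}} B_{i \shortleftarrow}^{\mathrm{new}} = \lambda_i\,\unit_{\bbV_i'} = -r_i(\bmlam)_i\,\unit_{\bbV_i'}$, which is immediate from the construction. At a vertex $j \neq i$, only the summands of $\mu_{\bV',j}$ coming from arrows $h$ with $s(h) = i$ change; their total should differ from the original contribution by the scalar operator $(\lambda_j - r_i(\bmlam)_j)\,\unit_{\bbV_j}$, matching the polynomial shift that appears in the definition of $r_i$. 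This is the main computational step: it requires tracing how the base-change conventions at $i$ and at $j$ interact through Lemma~\ref{lem:pr}, and it is exactly here that the specific coefficients $\lambda_{i,(d_i - f_{ji}l - 1)}$ and powers $\epsilon_j^{d_j - f_{ij}l - 1}$ in the formula for $r_i(\bmlam)_j$ acquire their geometric meaning.

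Finally, the whole construction is algebraic in $\bB$ and $G_\bd(\bV)$-equivariant (the ambiguity in the choice of isomorphism $V_i' \otimes R_{d_i} \simeq \bbV_i'$ is absorbed into a free $G_{d_i}(V_i')$-action), so it descends to a morphism $\mathcal{F}_i$ between the affine GIT quotients defining the two quiver schemes. Applying the same recipe to $\bB^{\mathrm{new}}$ with parameter $r_i(\bmlam)$ recovers the original data up to a canonical $G_{d_i}(V_i)$-ambiguity, using that $r_i^2 = \unit$ and that $r_i(\bmlam)_i = -\lambda_i$ is again a unit; this yields $\mathcal{F}_i \circ \mathcal{F}_i = \unit$ and establishes that $\mathcal{F}_i$ is an isomorphism of schemes. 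The principal obstacle, as flagged above, is the explicit verification that the modified mesh relations at vertices $j \neq i$ produce precisely the operator $-r_i(\bmlam)_j\,\unit_{\bbV_j}$.
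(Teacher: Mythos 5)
Your construction is the explicit ``kernel'' version of the reflection functor in the style of Maffei and of \cite[Section~4]{YD1}: you replace $\bbV_i$ by $\ker B_{\shortleftarrow i} \subset \mathbb{W}_i$ and shift the endomorphism $T = B_{i\shortleftarrow}B_{\shortleftarrow i}$ of $\mathbb{W}_i$ to $T + \lambda_i\,\unit$. The paper takes a different, more structural route: Proposition~\ref{lem1} identifies the level set $\mu_{\bV,i}^{-1}(-\lambda_i\,\unit)$ as a principal $G_{d_i}(V_i)$-bundle over $\calO \times \bM_{\sQ,\bd}^{(i)}(\bV)$, where $\calO$ is the $G_{d_i}(\widetilde V_i)$-coadjoint orbit of $\diag(\lambda_i,\dots,\lambda_i,0,\dots,0)$ (this rests on the Hausel--Wong--Wyss result, Proposition~\ref{prp:HWW}), and then realizes the reflection as the scalar translation $\calO \to \calO - \lambda_i\,\unit_{\widetilde V_i}$ between the two quotients. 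The two descriptions agree --- your $T \mapsto T + \lambda_i\,\unit$ is exactly the paper's shift $\Phi_i(\bB) \mapsto \Phi_i(\bB) - \lambda_i\,\unit_{\widetilde V_i}$ read on $-T$ --- but the bundle formulation buys you for free that the quotient by $G_{d_i}(V_i)$ is a geometric quotient of schemes, so that the scalar shift immediately induces an isomorphism of coordinate rings of invariants. In your version, the fact that $\bbV'_i$ varies with $\bB$ means the claim ``the whole construction is algebraic in $\bB$'' and the descent to the affine quotients need a genuine argument (trivializing the family of kernels, or passing to the quotient as the paper does); your remark that the ambiguity is ``absorbed into a free $G_{d_i}(V'_i)$-action'' is the right idea but is not yet a proof that you get a morphism of affine schemes.

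The more serious issue is that you explicitly leave unproved the step you yourself call the principal obstacle: the verification that at each vertex $j \neq i$ the modified moment map component equals $-r_i(\bmlam)_j\,\unit_{\bbV_j}$. This is precisely Lemma~\ref{key} of the paper and it is not a routine bookkeeping exercise: one must expand the identity $T' = T + \lambda_i\,\unit_{\mathbb{W}_i}$ of $R_{d_i}$-endomorphisms into coefficients of $\epsilon_i^{d_i - f_h l - 1}$, reassemble the result into the $R_{d_h}$-valued products $B'_h B'_{\ov h}$, and then apply $\pr_{d_h,d_j}$ via Lemma~\ref{lem:pr} to land in $\g_{d_j}(V_j)$; the factor $f_h$ produced by $\pr_{d_h,d_j}$ combines with the count of arrows to yield the coefficient $c_{ij}$ in $r_i(\bmlam)_j$. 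Without this computation the statement that the image of the reflection lies in $\mu_{\bV'}^{-1}(-r_i(\bmlam)\,\unit_{\bV'})$ --- and hence the whole theorem --- is unestablished. So the architecture of your argument is sound and equivalent to the paper's, but as written it omits the one step that actually ties the geometric construction to the definition of $r_i$.
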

The map $\mathcal{F}_i$ generalizes the $i$-th
reflection functor of quiver varieties~\cite{NH03}.

We will prove this theorem in the next subsection. 
\begin{rem}
In fact, the above theorem holds even if $\sQ$ has edge-loops on vertices except $i$.
However, for simplicity, we suppose that all the vertices have no edge-loops 
because we are mainly interested in the case where $(\sQ, \bd)$ defines a symmetrizable generalized Cartan matrix.    
\end{rem}
\subsection{Proof of Theorem~\ref{thm:reflection}}\label{subsec:ref-proof}
For $h\in H$, define
\(\displaystyle V_h\coloneqq \sum_{l=0}^{f_{\overline{h}}-1}V_{s(h)}\epsilon_{s(h)}^l\)
so that $V_h \otimes R_{d_h} = V_{s(h)} \otimes R_{d_{s(h)}}$.
Then the extension of scalar
gives an isomorphism
\[
   \alpha_h \colon \Hom_{\C}(V_h,V_{t(h)}\otimes R_{d_{t(h)}})
   \xrightarrow{\sim} 
   \Hom_{R_{d_h}}(V_{s(h)}\otimes R_{d_{s(h)}},V_{t(h)}\otimes R_{d_{t(h)}}).
   \]
As similarly to \eqref{eq:iso}, we also obtain an isomorphism 
\[
   \beta_h \colon \Hom_{\C}(V_{s(h)}\otimes R_{d_{s(h)}},V_{\overline{h}})
   \xrightarrow{\sim}
   \Hom_{R_{d_h}}(V_{s(h)}\otimes R_{d_{s(h)}},V_{t(h)}\otimes R_{d_{t(h)}}).
\]
Fix a vertex $i \in I$ and set
$\widetilde{V}_i\coloneqq \bigoplus_{t(h)= i}V_h$, so 
\[
   \dim \widetilde{V}_i = \sum_{t(h)=i} \dim V_h 
   = \sum_{t(h)=i} f_{\ov{h}} v_{s(h)} 
   = \sum_{j \in I} a_{ij} \frac{d_j}{d_{ij}} v_j.
\]
Then we can decompose the vector space $\bM_{\sQ,\bd}(\bV)$ as
\[
   \bM_{\sQ,\bd}(\bV) \simeq 
   \Hom_{\C}\left(\widetilde{V}_i,V_i \otimes R_{d_i}\right) 
   \oplus \Hom_{\C}\left(V_i \otimes R_{d_i},\widetilde{V}_i\right) \oplus \bM_{\sQ,\bd}^{(i)}(\bV),
\]
where 
$\bM_{\sQ,\bd}^{(i)}(\bV)\coloneqq \bigoplus_{t(h),s(h) \neq i} \Hom_{\C}\left(V_{s(h)} \otimes R_{d_{s(h)}},V_{t(h)} \otimes R_{d_{t(h)}}\right) .$
Each $\bB \in \bM_{\sQ,\bd}(\bV)$
corresponds to the triple
$(B_{i \shortleftarrow},B_{\shortleftarrow i},B_{\neq i})$,
where
\begin{flalign*}
    & B_{i \shortleftarrow}\coloneqq (\sgn(h)\alpha_h^{-1}(B_h))_{t(h)=i} \in \Hom_{\C}\left(\widetilde{V}_i,V_i \otimes R_{d_i}\right),                                                                                                                                                        \\
    & B_{\shortleftarrow i}\coloneqq (\beta_{\overline{h}}^{-1}(B_ {\overline{h}}))_{t(h)=i} \in \Hom_{\C}\left(V_i \otimes R_{d_i},\widetilde{V}_i\right), \\
    & B_{\neq i} \coloneqq (B_h)_{t(h),s(h) \neq i} \in  \bM_{\sQ,\bd}^{(i)}(\bV),
\end{flalign*}
and the group $G_{d_i}(V_i)$ acts trivially on $\bM_{\sQ,\bd}^{(i)}(\bV)$. 

\begin{prop}\label{lem1}
   Let $\lambda_i$ be a unit of $R_{d_i}$. 
   
   {\rm (i)} If $\dim \widetilde{V}_i < \dim V_i$, then the set 
   $\mu_{\bV,i}^{-1}(-\lambda_i\,\unit_{V_i\otimes {R_{d_i}}})$ is empty. 

   {\rm (ii)} If $\dim \widetilde{V}_i \geq \dim V_i$, 
   then the map 
   \[
      \Phi_i \colon \bM_{\sQ,\bd}(\bV) \to \g_{d_i}(\widetilde{V}_i) \times \bM_{\sQ,\bd}^{(i)}(\bV); 
      \quad \bB \mapsto \left( -B_{\shortleftarrow i}^{R_{d_i}}B_{i \shortleftarrow}^{R_{d_i}}, B_{\neq i} \right)
   \]
   restricts to a principal $G_{d_i}(V_i)$-bundle 
   $\mu_{\bV,i}^{-1}(-\lambda_i\,\unit_{V_i\otimes R_{d_i}}) \to \calO \times \bM_{\sQ,\bd}^{(i)}(\bV)$,
   where $\calO$ is the $G_{d_i}(\widetilde{V}_i)$-coadjoint orbit 
   consisting of elements having a matrix representation of the form  
   $\diag (\lambda_i, \dots ,\lambda_i, 0, \dots ,0)$ with $\lambda_i$ appearing 
   $\dim V_i$ times in the diagonal entries. 
   Via this map, $\calO \times \bM_{\sQ,\bd}^{(i)}(\bV)$ is a Hamiltonian reduction 
   of $\bM_{\sQ,\bd}(\bV)$.
\end{prop}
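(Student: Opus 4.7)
My plan is to reduce Proposition~\ref{lem1} to the $l=1$ case of Proposition~\ref{prp:HWW}, applied to the pair of free $R_{d_i}$-modules $V_i\otimes R_{d_i}$ and $\widetilde{V}_i\otimes R_{d_i}$; the factor $\bM_{\sQ,\bd}^{(i)}(\bV)$ then tags along as a trivial $G_{d_i}(V_i)$-factor. Writing $X\coloneqq B_{i\shortleftarrow}^{R_{d_i}}$ and $Y\coloneqq B_{\shortleftarrow i}^{R_{d_i}}$, the signs $\sgn(h)$ absorbed in the definition of $B_{i\shortleftarrow}$ make the moment map relation $\mu_{\bV,i}(\bB)=-\lambda_i\,\unit_{V_i\otimes R_{d_i}}$ equivalent to $XY=-\lambda_i\,\unit_{V_i\otimes R_{d_i}}$, while $\Phi_i(\bB)=(-YX,B_{\neq i})$ and the orbit $\calO$ is characterized by having the standard representative $\Theta_0\coloneqq \diag(\lambda_i,\ldots,\lambda_i,0,\ldots,0)$ with $\lambda_i$ appearing $\dim V_i$ times.

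Part (i) then falls out of a rank count: $XY$ factors through $\widetilde V_i\otimes R_{d_i}$, whose $R_{d_i}$-rank is $\dim\widetilde V_i$, whereas $-\lambda_i\,\unit$ is an $R_{d_i}$-automorphism of rank $\dim V_i$ because $\lambda_i$ is a unit. For the main content of (ii), suppose $XY=-\lambda_i\,\unit$. Then $Y$ is split-injective with retraction $-\lambda_i^{-1}X$ and $X$ is surjective with section $-\lambda_i^{-1}Y$. Setting $Z\coloneqq YX$, the identity $Z(Z+\lambda_i)=0$ together with $\lambda_i\in R_{d_i}^{\times}$ yields complementary idempotents $-\lambda_i^{-1}Z$ and $\unit+\lambda_i^{-1}Z$, hence the direct sum decomposition
\[
   \widetilde V_i\otimes R_{d_i}=\ker X\,\oplus\,\range Y
\]
on which $-Z$ acts respectively as $0$ and as $\lambda_i\,\unit$. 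Both summands are free $R_{d_i}$-modules (projective modules over the local Artinian ring $R_{d_i}$ are free) of $R_{d_i}$-ranks $\dim\widetilde V_i-\dim V_i$ and $\dim V_i$, so $-YX\in\calO$. The $G_{d_i}(V_i)$-action on the fiber is free because $gX=X$ and surjectivity of $X$ force $g=\unit$.

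For surjectivity of $\Phi_i$ onto $\calO\times \bM_{\sQ,\bd}^{(i)}(\bV)$, I would take any $A\in\calO$, write $A=g\Theta_0g^{-1}$ for some $g\in G_{d_i}(\widetilde V_i)$, and build a lift by setting $Y_0$ to be the inclusion of $V_i\otimes R_{d_i}$ onto the first $\dim V_i$ summands, $X_0\coloneqq -\lambda_i\,\pr$ for the complementary projection $\pr$, and then $(X,Y)\coloneqq (X_0g^{-1},gY_0)$; a direct check gives $XY=-\lambda_i\,\unit$ and $-YX=A$. The principal $G_{d_i}(V_i)$-bundle structure descends from local sections of $g\mapsto g\Theta_0g^{-1}$ on $\calO$ exactly as in the end of the proof of Proposition~\ref{prp:HWW}, and the Hamiltonian reduction assertion follows because $-YX$ is a moment map for the residual $G_{d_i}(\widetilde V_i)$-action (cf.\ Remark~\ref{rem:loop-moment}) and hence preserves the Poisson structure. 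The main technical obstacle is to confirm carefully that freeness and splitting behave well over the non-field ring $R_{d_i}$ and that the scheme-theoretic structure on the quotient really matches $\calO\times \bM_{\sQ,\bd}^{(i)}(\bV)$; the linear algebra above takes care of the first, and the second is the same bookkeeping as in Proposition~\ref{prp:HWW}.
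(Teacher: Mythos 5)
Your proposal is correct and follows essentially the same route as the paper: identify $\mu_{\bV,i}(\bB)$ with $X Y$ via the compatibility of $\alpha_h,\beta_h$ with extension of scalars, deduce (i) from the forced surjectivity of $X$, and obtain (ii) from the $l=1$ case of Proposition~\ref{prp:HWW} applied to $\Theta = 0\,\unit_{V'_i\otimes R_{d_i}}\oplus\lambda_i\,\unit_{V_i\otimes R_{d_i}}$, with $\bM_{\sQ,\bd}^{(i)}(\bV)$ riding along as a trivial factor. The only difference is that you additionally re-derive by hand (via the idempotent $-\lambda_i^{-1}YX$ and the explicit lift of a point of $\calO$) several facts that the citation of Proposition~\ref{prp:HWW} already supplies, which is harmless redundancy rather than a new argument.
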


\begin{proof}
   Suppose that $\mu_{\bV,i}^{-1}(-\lambda_i\,\unit_{V_i\otimes{R_{d_i}}})$ is non-empty 
   and take any $\bB \in \mu_{\bV,i}^{-1}(-\lambda_i\,\unit_{V_i\otimes{R_{d_i}}})$.  
   Note that for any $B\in \Hom_{R_{d_h}}(V_{s(h)}\otimes R_{d_{s(h)}},V_{t(h)}\otimes R_{d_{t(h)}})$, 
   we have 
   \[
      B^{R_{d(s(h))}}=\beta_h^{-1}(B)^{R_{d(s(h))}},\quad 
      B^{R_{d(t(h))}}=\alpha_h^{-1}(B)^{R_{d(t(h))}},
   \]
   where we use the following identifications: 
   \[
      V_{t(h)} \otimes R_{d_{t(h)}}\otimes_{R_h} R_{d_{s(h)}}
      \simeq V_{\overline{h}} \otimes R_{d_{s(h)}},\quad
      V_{s(h)} \otimes R_{d_{s(h)}}\otimes_{R_{\overline{h}}} R_{d_{t(h)}}\simeq V_{h} \otimes R_{d_{t(h)}}.
   \]
Hence we have
\[
   \mu_{\bV,i}(\bB)= \sum_{\substack{h\in H, \\t(h)=i}} \sgn(h) B_h^{R_{d_i}} B_{\overline{h}}^{R_{d_i}}= \sum_{\substack{h\in H, \\t(h)=i}} \sgn(h) \alpha^{-1}(B_h)^{R_{d_i}} \beta^{-1}(B_{\overline{h}})^{R_{d_i}} = B_{i \shortleftarrow}^{R_{d_i}} B_{\shortleftarrow i}^{R_{d_i}}.
\] 
   Since $\lambda_i$ is a unit, 
   the moment map condition 
   $B_{i \shortleftarrow}^{R_{d_i}} B_{\shortleftarrow i}^{R_{d_i}} = -\lambda_i\,\unit_{V_i\otimes{R_{d_i}}}$
   implies that $B_{i \shortleftarrow}^{R_{d_i}}$ is surjective and 
   $B_{\shortleftarrow i}^{R_{d_i}}$ is injective. 
   In particular, we have $\dim \widetilde{V}_i \geq \dim V_i$. 
   Now, we embed $V_i$ into $\widetilde{V_i}$ as a vector subspace, 
   take any complement $V'_i$ (so $\widetilde{V_i} = V'_i \oplus V_i$) 
   and apply Proposition~\ref{prp:HWW} to 
   $\Theta = (0\,\unit_{V'_i \otimes R_{d_i}}) \oplus (\lambda_i\,\unit_{V_i \otimes R_{d_i}}) \in \g_{d_i}(\widetilde{V}_i)$
   with $l=1,\,W_0=V'_i,\,W_1=V_i,\,\theta_0=0,\,\theta_1=\lambda_i$. 
   Then $\calO_\Theta =\calO$ and the map 
   \[
      M \coloneqq \Hom_{\C}\left(\widetilde{V}_i,V_i \otimes R_{d_i}\right) 
      \oplus \Hom_{\C}\left(V_i \otimes R_{d_i},\widetilde{V}_i\right) 
      \to \g_{d_i}(\widetilde{V}_i), \quad 
      (B_{i \shortleftarrow},B_{\shortleftarrow i}) \mapsto 
      -B_{\shortleftarrow i}^{R_{d_i}} B_{i \shortleftarrow}^{R_{d_i}}
   \]
   restricts to a principal $G_{d_i}(V_i)$-bundle 
   \[
     Z \coloneqq \{\,(B_{i \shortleftarrow},B_{\shortleftarrow i}) \in M \mid B_{i \shortleftarrow}^{R_{d_i}} B_{\shortleftarrow i}^{R_{d_i}} 
     = -\lambda_i\,\unit_{V_i \otimes R_{d_i}} \,\} 
     \to \calO,
   \]
   via which $\calO$ is a Hamiltonian reduction of $M$. 
   Since $\mu_{\bV,i}^{-1}(-\lambda_i\,\unit_{V_i\otimes{R_{d_i}}}) = Z \times \bM_{\sQ,\bd}^{(i)}(\bV)$ 
   and $G_{d_i}(V_i)$ acts trivially on $\bM_{\sQ,\bd}^{(i)}(\bV)$, 
   the assertion follows.
\end{proof}

By Proposition~\ref{lem1}, the level set 
$\mu_{\bV,i}^{-1}(-\lambda_i\,\unit_{V_i\otimes{R_{d_i}}})$ is non-empty if and only if
\[
   v_i \leq \dim \widetilde{V_i}=2v_i -\sum_{j}c_{ij}v_j,
\]
which is equivalent to $s_i(\bv) \in \Z_{\geq 0}^I$ 
as the $i$-th component of $s_i(\bv)$ is equal to $\dim \widetilde{V}_i - \dim V_i$.
We assume this condition, because otherwise
$\QS_{\sQ,\bd}(\bmlam,\bv)$ and
$\QS_{\sQ,\bd}(r_i(\bmlam),s_i(\bv))$
are both empty.
As in the proof of Proposition~\ref{lem1}, 
we embed $V_i$ into $\widetilde{V_i}$ as a vector subspace 
and take any complement $V'_i$.
By Proposition~\ref{lem1}, we have an isomorphism
\[
   \mu_{\bV,i}^{-1}(-\lambda_i\,\unit_{V_i\otimes R_{d_i}})/G_{d_i}(V_i) 
   \xrightarrow{\sim} \calO\times \bM_{\sQ,\bd}^{(i)}(\bV),
\]
where $\calO$ is the $G_{d_i}(\widetilde{V}_i)$-coadjoint orbit of
\[
   \Lambda = \begin{pmatrix} \lambda_i\,\unit_{V_i\otimes R_{d_i}} & 0 \\ 0& 0\,\unit_{V'_i\otimes R_{d_i}} \end{pmatrix}.
\]
We define an $I$-graded $\C$-vector space $\bV'$ by
\[
   \bV'=\bigoplus_{j\in I}V'_j, \quad V'_j=\begin{cases}V'_i & \text{if}  \ j=i, \\V_j& \text{if} \ j \neq i.
   \end{cases}
\]
Then $\dim \bV'=s_i(\bv)$.
By replacing $\bV$ and $\lambda_i$ with $\bV'$ and $-\lambda_i$, respectively  
in Proposition~\ref{lem1}, 
we also have an isomorphism
\[
   \mu_{\bV',i}^{-1}(\lambda_i\,\unit_{V'_i\otimes R_{d_i}})/G_{d_i}(V'_i) \xrightarrow{\sim} 
   \calO'\times \bM_{\sQ,\bd}^{(i)}(\bV'),
\]
where $\calO'$ is the 
$G_{d_i}(\widetilde{V}_i)$-coadjoint orbit of 
\[
   \Lambda'=\begin{pmatrix} 0\,\unit_{V_i\otimes R_{d_i}} & 0 \\ 0& -\lambda_i\,\unit_{V'_i\otimes R_{d_i}} \end{pmatrix}=\Lambda -\lambda_i\,\unit_{\widetilde{V}_i}.
\]
Note that $\bM_{\sQ,\bd}^{(i)}(\bV') = \bM_{\sQ,\bd}^{(i)}(\bV)$.
Therefore, the scalar shift 
$\calO\xrightarrow{\sim}\calO'$ induces an isomorphism
\[
   \widetilde{\mathcal{F}}_i \colon 
   \mu_{\bV,i}^{-1}(-\lambda_i\,\unit_{V_i\otimes R_{d_i}})/G_{d_i}(V_i)
   \xrightarrow{\sim} \mu_{\bV',i}^{-1}(\lambda_i\,\unit_{V'_i\otimes R_{d_i}})/G_{d_i}(V'_i).
\]
For $\bB \in \mu_{\bV,i}^{-1}(-\lambda_i\,\unit_{V_i\otimes R_{d_i}})$, 
take $\bB' \in \mu_{\bV',i}^{-1}(\lambda_i\,\unit_{V'_i\otimes R_{d_i}})$ so that 
$\widetilde{\mathcal{F}}_i[\bB]=[\bB']$. 

\begin{lem}\label{key}
   If $\mu_{\bV}(\bB)=-\bmlam\,\unit_{\bV},$ then 
   $\mu_{\bV'}(\bB')=-r_i(\bmlam)\,\unit_{\bV'}.$
\end{lem}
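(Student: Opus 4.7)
My plan is to verify the identity $\mu_{\bV'}(\bB') = -r_i(\bmlam)\,\unit_{\bV'}$ component by component. The $i$-th component is immediate: applying Proposition~\ref{lem1} to $\bV'$ with parameter $-\lambda_i$ (which is still a unit of $R_{d_i}$) gives $\mu_{\bV',i}(\bB') = \lambda_i\,\unit_{V'_i \otimes R_{d_i}} = -r_i(\bmlam)_i\,\unit_{V'_i \otimes R_{d_i}}$, since $r_i(\bmlam)_i = -\lambda_i$ by definition.

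For $j \neq i$, the arrows $h$ with $t(h) = j$ and $s(h) \neq i$ lie in $\bM_{\sQ,\bd}^{(i)}(\bV) = \bM_{\sQ,\bd}^{(i)}(\bV')$ and contribute identically to $\mu_{\bV,j}(\bB)$ and $\mu_{\bV',j}(\bB')$. Hence
\[
   \mu_{\bV',j}(\bB') - \mu_{\bV,j}(\bB) = \sum_{\substack{h:\,s(h)=i\\ t(h)=j}} \sgn(h)\bigl( (B'_h)^{R_{d_j}} (B'_{\overline h})^{R_{d_j}} - B_h^{R_{d_j}} B_{\overline h}^{R_{d_j}} \bigr),
\]
and since $\mu_{\bV,j}(\bB) = -\lambda_j\,\unit_{V_j \otimes R_{d_j}}$, the lemma reduces to showing that this sum equals $(\lambda_j - r_i(\bmlam)_j)\,\unit_{V_j \otimes R_{d_j}}$.

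To evaluate the sum, I use the scalar-shift identity built into the construction of $\bB'$, namely
\[
   (B'_{\shortleftarrow i})^{R_{d_i}} (B'_{i \shortleftarrow})^{R_{d_i}} = B_{\shortleftarrow i}^{R_{d_i}} B_{i \shortleftarrow}^{R_{d_i}} + \lambda_i\,\unit_{\widetilde{V}_i \otimes R_{d_i}}.
\]
Restricting both sides to the $V_h \otimes R_{d_i}$-summand of $\widetilde{V}_i \otimes R_{d_i}$ for each arrow $h$ with $s(h)=i$, $t(h)=j$, and translating from $(B_{i \shortleftarrow}, B_{\shortleftarrow i})$ back to $(B_h, B_{\overline h})$ through the isomorphisms $\alpha_h$ and $\beta_{\overline h}$, one expresses each primed block as the unprimed one plus a correction proportional to $\lambda_i$. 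Extending scalars to $R_{d_j}$, summing over the $a_{ij}$ arrows between $i$ and $j$ (with the correct signs) and using the $R_{d_h}$-linearity of each $B_h, B_{\overline h}$, the unprimed pieces cancel and the corrections reassemble into a scalar endomorphism of $V_j \otimes R_{d_j}$.

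The main obstacle is the change-of-scalars bookkeeping. One must carefully track how the constant $\lambda_i \in R_{d_i}$ is transported successively through the passage from $R_{d_i}$-linear to $R_{d_h}$-linear homomorphisms via $\alpha_h, \beta_{\overline h}$, then through the extension $(-)^{R_{d_j}}$, and verify that the aggregate, summed over the arrows between $i$ and $j$, collapses to the scalar multiple of $\unit_{V_j \otimes R_{d_j}}$ whose coefficient is exactly
\[
   \sum_{l=0}^{d_{ij}-1}\lambda_{i,(d_i-\frac{d_i}{d_{ij}}l-1)}\,c_{ij}\,\epsilon_j^{d_j-\frac{d_j}{d_{ij}}l-1},
\]
matching the definition of $\lambda_j - r_i(\bmlam)_j$. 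That this endomorphism is genuinely central (not just $R_{d_j}$-linear) is forced by the fact that the shift $\lambda_i\,\unit_{\widetilde{V}_i \otimes R_{d_i}}$ is already scalar; the role of the specific powers of $\epsilon_j$ is dictated by how the multiplication-by-$\epsilon_i$ operator on $\widetilde{V}_i \otimes R_{d_i}$ translates, under the same transport, to multiplication by $\epsilon_j^{d_j/d_{ij}}$ on $V_j \otimes R_{d_j}$.
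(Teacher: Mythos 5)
Your strategy is exactly the paper's: start from the scalar shift $\Phi_i(\bB')=\Phi_i(\bB)-\lambda_i\,\unit_{\widetilde{V}_i}$, restrict to the block $V_h$ of each arrow $h$ meeting $i$, transport back through $\alpha_h,\beta_{\ov{h}}$, extend scalars to $R_{d_j}$, and sum over the arrows between $i$ and $j$; your treatment of the $i$-th component and your reduction of the $j$-th component to the arrows with $s(h)=i$, $t(h)=j$ are both correct. The problem is that the step you label ``the main obstacle'' --- the change-of-scalars bookkeeping --- is precisely the content of the lemma, and you defer it (``one must carefully track \dots and verify that the aggregate \dots collapses to \dots'') rather than carry it out. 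Two concrete identities are needed and are absent. First, the formula
\[
   \beta_{\overline{h}}(\overline{B})\,\alpha_h(B)
   =\sum_{l=0}^{d_h-1}\overline{B}\,N_h^l\,B\,\epsilon_h^{d_h-l-1}
   \in \gl_\C(V_h)\otimes R_{d_h},
\]
which is what lets you pass from the block-restricted shift (an identity of $\C$-linear maps, read off coefficient-by-coefficient in $\epsilon_i^{d_i-f_h l-1}$, picking out the coefficients $\lambda_{i,(d_i-f_h l-1)}$) to the statement
$\sgn(h)B'_{\overline{h}}B'_h=\sgn(h)B_{\overline{h}}B_h+\sum_l\lambda_{i,(d_i-f_h l-1)}\epsilon_h^{d_h-l-1}\unit_{V_h}$ about the actual $R_{d_h}$-endomorphisms entering the moment map. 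Second, the computation
\[
   \pr_{d_h,d_j}\left(\epsilon_h^{d_h-l-1}\unit\right)=f_h\,\epsilon_j^{\,d_j-f_h l-1}\unit_{V_j},
\]
coming from Lemma~\ref{lem:pr}, which is the source of the factor $f_h=f_{ij}=d_j/d_{ij}$. Your closing remark attributes the powers of $\epsilon_j$ to the substitution $\epsilon_i\mapsto\epsilon_j^{d_j/d_{ij}}$, but says nothing about this multiplicity $f_{ij}$; without it the total coefficient is $a_{ij}$ (the number of arrows) rather than $a_{ij}f_{ij}=-c_{ij}$, and the claimed formula for $r_i(\bmlam)_j$ does not come out. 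Since deriving that exact coefficient is what the lemma asserts, the proposal as written is an accurate plan of the paper's proof but not a proof.
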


\begin{proof}
   By the definition, 
   $\Phi_i(\bB')$ is equal to $\Phi_i(\bB) -\lambda_i\,\unit_{\widetilde{V}_i}$.
   Thus we have 
   \[
      \sum_{k=0}^{d_i-1}B'_{\shortleftarrow i}(N'_i)^{k}B'_{i\shortleftarrow}\epsilon_i^{d_i-k-1} 
      =\sum_{k=0}^{d_i-1}B_{\shortleftarrow i}N_i^{k}B_{i\shortleftarrow}\epsilon_i^{d_i-k-1}
      +\lambda_i\,\unit_{\widetilde{V}_i},
   \]
   where $N_i \in \gl_\C(V_i \otimes R_{d_i})$, $N'_i \in \gl_\C(V'_i \otimes R_{d_i})$ 
   are the multiplication by $\epsilon_i$. 
   This implies that for any arrow $h$ with $t(h)=i$, 
   the following equality holds:
   \[
      \sum_{k=0}^{d_i-1}\sgn(h)\beta_{\overline{h}}^{-1}(B'_{\overline{h}})(N'_i)^{k}\alpha_h^{-1}(B'_h)\epsilon_i^{d_i-k-1}=\sum_{k=0}^{d_i-1}\sgn(h)\beta_{\overline{h}}^{-1}(B_{\overline{h}})N_i^{k}\alpha_h^{-1}(B_h)\epsilon_i^{d_i-k-1}
      +\lambda_i\,\unit_{V_h}.
   \]
   For all $l=0,\ldots,d_h-1$, comparing the coefficient of 
   $\epsilon_i^{d_i-f_hl-1}$ in the above equality yields
   \[
      \sgn(h)\beta_{\overline{h}}^{-1}(B'_{\overline{h}})(N'_h)^{l}\alpha_h^{-1}(B'_h)
      =\sgn(h)\beta_{\overline{h}}^{-1}(B_{\overline{h}})N_h^{l}\alpha_h^{-1}(B_h)+\lambda_{i,(d_i-f_hl-1)}\unit_{V_h},
   \]
   where $N_h =N_i^{f_h}$ and $N'_h = (N'_i)^{f_h}$.
   On the other hand, for  
   $B \in \Hom_{\C}(V_h,V_{t(h)}\otimes R_{d_{t(h)}})$ and 
   $\overline{B} \in \Hom_\C(V_{s(h)}\otimes R_{d_{s(h)}}, V_{\overline{h}})$
   we have
   \[
      \beta_{\overline{h}}(\overline{B})\alpha _h(B)
      =\sum_{l=0}^{d_h-1}\ov{B} N_h^l B \epsilon_h^{d_h-l-1}
   \]
   as elements of $\End_{R_{d_h}}(V_{s(h)} \otimes R_{d_{s(h)}}) = \gl_\C(V_h) \otimes R_{d_h}$.
   Thus 
   \begin{align*}
      \sgn(h)B'_{\overline{h}}B'_{h} 
      &= \sgn(h) \sum_{l=0}^{d_h-1} \beta_{\overline{h}}^{-1}(B'_{\overline{h}})(N'_h)^{l}\alpha_h^{-1}(B'_h) \epsilon_h^{d_h -l-1} \\
      &= \sgn(h) \sum_{l=0}^{d_h-1} \beta_{\overline{h}}^{-1}(B_{\overline{h}})N_h^{l}\alpha_h^{-1}(B_h) \epsilon_h^{d_h -l-1} 
      + \sum_{l=0}^{d_h-1} \lambda_{i,(d_i-f_hl-1)} \epsilon_h^{d_h -l-1} \unit_{V_h} \\
      &= \sgn(h)B_{\overline{h}}B_h + \sum_{l=0}^{d_h-1}\lambda_{i,(d_i-f_hl-1)}\epsilon_h^{d_h-l-1}\unit_{V_h}.
   \end{align*}
   Replacing $h$ with $\overline{h}$, we also obtain 
   \[
      \sgn(h)B'_{h}B'_{\overline{h}}=\sgn(h)B_hB_{\overline{h}} - \sum_{l=0}^{d_h-1}\lambda_{i,(d_i-f_{\overline{h}}l-1)}\epsilon_h^{d_h-l-1}\unit_{V_{\overline{h}}}
   \]
   for any arrow $h$ with $s(h)=i$.
   Note that 
   \[
      \pr_{d_h,d_{t(h)}}\left(\epsilon_{h}^{d_h-l-1}\unit_{V_{\overline{h}}}\right)=\sum_{k=0}^{f_h-1}N_{t(h)}^k\left(\epsilon_h^{d_h-l-1}\unit_{V_{\overline{h}}}\right)N_{t(h)}^{f_h-k-1}\unit_{V_{t(h)}}=f_{h}\epsilon_{t(h)}^{d_{t(h)}-f_hl-1}\unit_{V_{t(h)}}
   \]
  by Lemma \ref{lem:pr}.
  Thus 
   \[
      \pr_{d_h,d_{t(h)}}\left(\sgn(h)B'_hB'_{\overline{h}}\right)=\pr_{d_h,d_{t(h)}}\left(\sgn(h)B_hB_{\overline{h}}\right) -\sum_{l=0}^{d_h-1}\lambda_{i,(d_i-f_{\overline{h}}l-1)}f_{h}\epsilon_{t(h)}^{d_{t(h)}-f_hl-1}\unit_{V_{t(h)}}.
   \]
   On the other hand, since $B'_h=B_h$ whenever $t(h),s(h)\neq i$,
   we have
   \[
      \pr_{d_h,d_{t(h)}}(B'_hB'_{\overline{h}})=\pr_{d_h,d_{t(h)}}(B_hB_{\overline{h}}).
   \]
   Thus, for all $j \neq i$, we obtain
   \begin{align*}
      \mu_{\bV',j}(\bB') &=\mu_{\bV,j}(\bB)
      -\sum_{\substack{h\in H \\s(h)=i,t(h)=j}}\sum_{l=0}^{d_h-1}\lambda_{i,(d_i-f_{\overline{h}}l-1)}f_{h}\epsilon_j^{d_j-f_{h}l-1}\unit_{V_{t(h)}} \\
      &=\mu_{\bV,j}(\bB)
      +\sum_{l=0}^{d_{ij}-1}c_{ij}\lambda_{i,(d_i-\frac{d_i}{d_{ij}}l-1)}\epsilon_j^{d_j-\frac{d_j}{d_{ij}}l-1}\unit_{V_j},
   \end{align*}
   whence the result.

\end{proof}
\begin{proof}[Proof of Theorem $\ref{thm:reflection}$]
   Restricting the principal $G_{d_i}(V_i)$-bundle 
   $\mu_{\bV,i}^{-1}(-\lambda_i\,\unit_{V_i\otimes R_{d_i}}) \xrightarrow{\Phi_i} \calO \times \bM_{\sQ,\bd}^{(i)}(\bV)$
   given in Proposition~\ref{lem1} to the closed subset $\mu_\bV^{-1}(-\bmlam\,\unit_\bV)$, 
   we obtain a geometric quotient 
   \[
     Z \coloneqq \Phi_i \left( \mu_\bV^{-1}(-\bmlam\,\unit_\bV) \right) 
     = \mu_\bV^{-1}(-\bmlam\,\unit_\bV)/G_{d_i}(V_i).
   \]
   Similarly we obtain a geometric quotient 
   $Z' \coloneqq \mu_{\bV'}^{-1}(-r_i(\bmlam) \unit_{\bV'})/G_{d_i}(V_i')$ 
   of $\mu_{\bV'}^{-1}(-r_i(\bmlam) \unit_{\bV'})$.
   By Lemma~\ref{key}, the isomorphism $\widetilde{\mathcal{F}}_i$ restricts to 
   an isomorphism $Z \xrightarrow{\sim} Z'$,
   which is $\prod_{j\neq i}G_{d_j}(V_j)$-equivariant.
   Thus it induces an isomorphism of $\C$-algebras
   \[
      \C[Z]^{\prod_{j\neq i}G_{d_j}(V_j)}\xrightarrow{\sim} 
      \C[Z']^{\prod_{j\neq i}G_{d_j}(V_j)}.
   \]
   Since $\C[Z] = \C[\mu_\bV^{-1}(-\bmlam\,\unit_\bV)]^{G_{d_i}(V_i)}$, 
   $\C[Z']=\C[\mu_{\bV'}^{-1}(-r_i(\bmlam) \unit_{\bV'})]^{G_{d_i}(V_i')}$, 
   we obtain an isomorphism of $\C$-schemes
   \[
      \QS_{\sQ,\bd}(\bmlam,\bv) \xrightarrow{\sim} \QS_{\sQ,\bd}(r_i(\bmlam),s_i(\bv)).
   \]
\end{proof}

\section{Regularization}\label{section:normalization}

In this section, we generalize \cite[Theorem~5.8]{YD1} 
using a result of Hausel--Wong--Wyss.

\subsection{Shifting trick}

In this subsection we fix a finite dimensional $\C$-vector space $V$ 
together with a positive integer $d$, 
and recall a sort of ``shifting trick'' found by Boalch~\cite{Boa01}  
relating to the $G_d(V)$-coadjoint orbits considered in Section~\ref{subsec:HWW}. 
For simplicity, we put $G \coloneqq \GL_\C(V)$ and $\g \coloneqq \gl_\C(V)$. 

Let $B_d(V)$ be the kernel of the homomorphism 
\[
   G_d(V) \to G; \quad 
   g=\sum_{k=0}^{d-1} g_k \epsilon^k \mapsto g_0,
\] 
and $\mathfrak{b}_d(V)$ be its Lie algebra. 
Then we have a direct sum decomposition 
\[
   \g_d(V) = \g \oplus \mathfrak{b}_d(V).
\]
Taking dual via the pairing on $\g_d(V)$, 
we also have a decomposition 
\begin{equation}\label{eq:dec}
   \g_d(V) = \epsilon^{d-1}\g_d(V) \oplus \mathfrak{b}_d^*(V),
\end{equation}
where 
\[
   \mathfrak{b}_d^*(V) \coloneqq 
   \sum_{k=0}^{d-2} \gl_\C(V) \epsilon^k \simeq 
   \g_d(V)/\epsilon^{d-1}\g_d(V).
\]
It may be regarded as the dual space of $\mathfrak{b}_d(V)$, 
and the coadjoint action of $g \in B_d(V)$ is given by
\[
   g\cdot \eta  = g \eta g^{-1} \mod \epsilon^{d-1}\g_d(V).
\]
According to the decomposition~\eqref{eq:dec}, 
we can decompose $A = \sum_{k=0}^{d-1} A_k \epsilon^k \in \g_d(V)$ as 
\[
   A = \epsilon^{d-1} A_{d-1} + A^0, \quad 
   A^0 \in \mathfrak{b}_d^*(V).
\] 

Now take any direct sum decomposition $V = \bigoplus_{i=0}^l W_i$ 
and elements $\theta_0,\theta_1, \dots ,\theta_l \in R_d$ so that 
$\theta_i - \theta_j$ is a unit whenever $i \neq j$. 
Put 
\[
   \Theta \coloneqq \bigoplus_{i=0}^l \theta_i\,\unit_{W_i \otimes R_d} \in \g_d(V),
\]
and consider the $G_d(V)$-coadjoint orbit $\calO_\Theta$ 
of $\Theta$ as in Section~\ref{subsec:HWW}.  
Let $\ccalO_\Theta \subset \mathfrak{b}_d^*(V)$ 
be the $B_d(V)$-coadjoint orbit of $\Theta^0$ 
and put 
\[
   G_\Theta \coloneqq \prod_{i=0}^l \GL_\C(W_i) \subset G,
\]
whose Lie algebra is 
$\g_\Theta \coloneqq \bigoplus_{i=0}^l \gl_\C(W_i) \subset \g$. 
Using the trace pairing we identify
the dual space $\g_\Theta^*$ with $\g_\Theta$.
Since $gbg^{-1} \in B_d(V)$ and $g\Theta^0 g^{-1}=\Theta^0$ 
for all $g \in G_\Theta$ and $b \in B_d(V)$, 
we see that the orbit $\ccalO_\Theta$
is invariant under the conjugation by $G_\Theta$.

\begin{prop}\label{prp:HY}
   There exists an $G_\Theta$-equivariant symplectic isomorphism 
   \[
      \ccalO_\Theta \xrightarrow{\sim} 
      \bigoplus_{i<j} \Hom_\C(W_i,W_j)^{\oplus (d-2)} \oplus 
      \bigoplus_{i>j} \Hom_\C(W_i,W_j)^{\oplus (d-2)}
   \]
   sending $\Theta^0$ to the origin.
\end{prop}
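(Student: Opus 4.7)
My plan is to construct the isomorphism as (essentially) the off-diagonal projection of $\eta-\Theta^0$. The key preliminary observation is that every $b\in B_d(V)$ satisfies $b\equiv 1\bmod\epsilon$, so the coadjoint action of $B_d(V)$ on $\mathfrak{b}_d^*(V)$ preserves the constant term in $\epsilon$. Since the constant term of $\Theta^0$ is the diagonal element $\sum_i\theta_i^{(0)}\,\unit_{W_i}\in\g_\Theta$, every $\eta\in\ccalO_\Theta$ lies in $\Theta^0+\sum_{k=1}^{d-2}\gl_\C(V)\epsilon^k$. Decomposing $\gl_\C(V)=\g_\Theta\oplus\g_\Theta^\perp$ into diagonal and off-diagonal blocks with respect to $V=\bigoplus_iW_i$, where $\g_\Theta^\perp\coloneqq\bigoplus_{i\neq j}\Hom_\C(W_j,W_i)$, I would define
\[
  \phi\colon\ccalO_\Theta\longrightarrow\bigoplus_{k=1}^{d-2}\g_\Theta^\perp\epsilon^k,\qquad
  \eta\longmapsto(\eta-\Theta^0)_{\g_\Theta^\perp}.
\]
The target is canonically isomorphic to the right-hand side of the proposition, and a direct stabilizer computation (elements of $B_d(V)$ fixing $\Theta^0$ have off-diagonal blocks supported in degree $\epsilon^{d-1}$ and arbitrary diagonal in $\prod_iB_d(W_i)$) shows that $\dim\ccalO_\Theta=(d-2)\sum_{i\neq j}\dim W_i\dim W_j$, matching the target dimension.

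For bijectivity I would solve, for any prescribed $X\in\bigoplus_{k=1}^{d-2}\g_\Theta^\perp\epsilon^k$, the equation $b\Theta^0b^{-1}\equiv\Theta^0+X+Y\pmod{\epsilon^{d-1}}$ inductively in powers of $\epsilon$, with $b=1+c\in B_d(V)$ and $Y\in\bigoplus_{k=1}^{d-2}\g_\Theta\epsilon^k$ to be determined. Rewriting as $[c,\Theta^0]\equiv(X+Y)(1+c)\pmod{\epsilon^{d-1}}$ and splitting into blocks, the $(i,j)$-equation for $i\neq j$ reads
\[
  c^{(i,j)}(\theta_j^0-\theta_i^0)\equiv X^{(i,j)}+(\text{terms of lower order in }\epsilon)\pmod{\epsilon^{d-1}};
\]
since $\theta_j^0-\theta_i^0$ has unit constant term $\theta_j^{(0)}-\theta_i^{(0)}\neq 0$, this uniquely determines $c^{(i,j)}$ order by order, and then the $(i,i)$-equation determines $Y^{(i,i)}$. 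Normalizing the diagonal blocks by $c^{(i,i)}=0$ pins down $b$ uniquely modulo $B_d(V)_{\Theta^0}$, so the resulting $\eta=\Theta^0+X+Y$ is independent of any remaining choices; this establishes both surjectivity and injectivity of $\phi$. The $G_\Theta$-equivariance is immediate because $G_\Theta$ commutes with $\Theta^0$ and preserves the block decomposition defining $\phi$.

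The main obstacle is verifying the symplectic property. For this I would compute the Kirillov--Kostant--Souriau form at $\Theta^0$: for tangent vectors $\delta_a\eta=[\xi_a,\Theta^0]$ with off-diagonal parts $\xi_a^{(i,j)}=X_a^{(i,j)}(\theta_j^0-\theta_i^0)^{-1}$ for $i\neq j$ and $\xi_a^{(i,i)}=0$, the identity $\omega_{\Theta^0}(\delta_1\eta,\delta_2\eta)=\langle\Theta^0,[\xi_1,\xi_2]\rangle$ reduces, via a direct trace-and-residue computation in $R_d$, to a non-degenerate pairing between the $\Hom_\C(W_i,W_j)$-summand and the $\Hom_\C(W_j,W_i)$-summand, weighted by explicit rational expressions in $\theta_j^0-\theta_i^0$. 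Since these weights are $G_\Theta$-invariant, they can be absorbed into a block-diagonal rescaling of $\phi$, after which the KKS form matches the standard symplectic form on the target at $\Theta^0$. To propagate agreement from $\Theta^0$ to all of $\ccalO_\Theta$, I would invoke the $B_d(V)$-invariance of the KKS form together with the $\mathfrak{b}_d(V)$-intertwining property of the inverse construction, which shows that both symplectic forms transform identically under the group action.
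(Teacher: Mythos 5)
The paper does not actually prove this proposition: its ``proof'' is the single line ``This is a special case of \cite[Corollary~3.9]{HY}.'' So you are attempting something strictly harder than what the authors do, and the comparison is between your direct construction and the citation. The set-theoretic part of your argument is essentially sound: the coadjoint action of $B_d(V)$ preserves the constant term, the stabilizer of $\Theta^0$ is as you describe, the dimension count gives $(d-2)\sum_{i\neq j}\dim W_i\dim W_j$, and the order-by-order solution of $[c,\Theta^0]\equiv(X+Y)(1+c)$ (after normalizing $b$ by the block-diagonal part of the stabilizer so that $c^{(i,i)}=0$) does show that the off-diagonal projection $\phi$ is a $G_\Theta$-equivariant isomorphism of varieties onto the stated vector space.

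The genuine gap is in the symplectic step, and it is not merely a matter of missing details. First, verifying $\phi^*\omega_{\mathrm{std}}=\omega_{\mathrm{KKS}}$ at the single point $\Theta^0$ and ``propagating by invariance'' cannot work: the target carries no $B_d(V)$-action under which the constant form is invariant and $\phi$ equivariant, so the $B_d(V)$-invariance of the KKS form says nothing about $\phi^*\omega_{\mathrm{std}}$ away from the base point. Second, the map $\phi$ (even after a constant block-diagonal rescaling) is in general \emph{not} symplectic. Already for $d=4$, writing $\eta=\Theta_0+\eta_1\epsilon+\eta_2\epsilon^2$ and tangent vectors $[\xi,\eta]$ with $\xi=\xi_1\epsilon+\xi_2\epsilon^2+\xi_3\epsilon^3$, one has $\omega_\eta([\xi,\eta],[\xi',\eta])=\tr(\eta_1[\xi_1,\xi_1'])+\tr(\Theta_0([\xi_1,\xi_2']+[\xi_2,\xi_1']))$; substituting the expressions of $\xi_1,\xi_2$ in terms of the coordinates $d\phi([\xi,\eta])$ leaves a residual term $-\tr\bigl(\eta_1[\xi_1,\xi_1']\bigr)$, whose dependence on the varying off-diagonal part of $\eta_1$ does not cancel as soon as there are at least three blocks $W_i$ (products $\Hom(W_0,W_1)\cdot\Hom(W_1,W_2)$ make $[\xi_1,\xi_1']$ non-block-diagonal). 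Thus the pushforward of the KKS form under $\phi$ has non-constant coefficients, and no constant linear rescaling can repair this. This is precisely why the isomorphism of \cite[Corollary~3.9]{HY} is built by a more elaborate recursive construction with nonlinear correction terms rather than by the naive off-diagonal projection; to complete your proof you would need to add such corrections (or run an equivariant Moser-type argument and check it stays algebraic), neither of which is routine.
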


\begin{proof}
   This is a special case of \cite[Corollary~3.9]{HY}.
\end{proof}

In particular, $\ccalO_\Theta$ is affine and 
the $G_\Theta$-action on $\ccalO_\Theta$ admits a moment map 
$\mu_{\ccalO_\Theta} \colon \ccalO_\Theta \to \g_\Theta^* \simeq \g_\Theta$ 
with $\mu_{\ccalO_\Theta}(\Theta^0)=0$. 

We let $G_\Theta$ act on the cotangent bundle $T^*G$ by the left translation 
and consider the diagonal action on the 
product $T^*G \times \ccalO_\Theta$,
which has a moment map 
\[
   \mu_{T^*G \times \ccalO_\Theta} 
   \colon T^*G \times \ccalO_\Theta \to \g_\Theta^*; \quad 
   (g,R,B) \mapsto -\pr_{\g_\Theta}(gRg^{-1}) + \mu_{\ccalO_\Theta}(B),
\] 
where $T^*G$ is identified with $G \times \g$ via the left translation 
and $\pr_{\g_\Theta} \colon \g \to \g_\Theta$ is the transpose of the 
inclusion $\g_\Theta \hookrightarrow \g$. 

Note that $\Theta_{d-1}$ lies in $\g_\Theta^{G_\Theta}$.

\begin{prop}
   The $G_\Theta$-action on the level set 
   $\mu_{T^*G \times \ccalO_\Theta}^{-1}(-\Theta_{d-1})$ 
   is free and the affine quotient $\mu_{T^*G \times \ccalO_\Theta}^{-1}(-\Theta_{d-1})/G_\Theta$  
   is a geometric quotient. Moreover, the map 
   \[
      \mu_{T^*G \times \ccalO_\Theta}^{-1}(-\Theta_{d-1}) \to \g_d(V);
      \quad (g,R,B) \mapsto \epsilon^{d-1} R + g^{-1}Bg 
   \]
   induces a symplectic isomorphism 
   \[
      \mu_{T^*G \times \ccalO_\Theta}^{-1}(-\Theta_{d-1})/G_\Theta 
      \xrightarrow{\sim} \calO_\Theta.
   \]   
\end{prop}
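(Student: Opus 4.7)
The map $\tilde\nu(g,R,B) := \epsilon^{d-1}R + g^{-1}Bg$ is designed to parametrize $\calO_\Theta$ via the semidirect decomposition $G_d(V) = G \ltimes B_d(V)$: the factor $g \in G$ encodes the ``constant part'' of a group element conjugating $\Theta$ to the target, while $B \in \ccalO_\Theta$ captures the $B_d(V)$-piece. My plan is to establish the bijection by exhibiting an explicit inverse via this factorization, and then to obtain the symplectic statement by Marsden--Weinstein reduction.

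First, the easy observations. The $G_\Theta$-action $h\cdot(g,R,B) = (hg, R, hBh^{-1})$ (left translation on $T^*G$ in its left trivialization together with conjugation on $\ccalO_\Theta$) acts freely already on the ambient space, since $hg=g$ forces $h=e$; the same calculation gives $(hg)^{-1}(hBh^{-1})(hg) = g^{-1}Bg$, showing that $\tilde\nu$ is manifestly $G_\Theta$-invariant. Freeness of this action of the reductive group $G_\Theta = \prod_i \GL_\C(W_i)$ on a smooth affine variety yields the geometric quotient.

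Next, for $(g,R,B)$ on the level set, I would pick $b\in B_d(V)$ with $\hat B := b\Theta^0 b^{-1} \equiv B \pmod{\epsilon^{d-1}\g_d(V)}$. Because $b = \unit + O(\epsilon)$, conjugation by $b$ fixes $\epsilon^{d-1}\Theta_{d-1}$ modulo $\epsilon^d = 0$, so
\[
   (g^{-1}b)\Theta(g^{-1}b)^{-1}
   = g^{-1}\hat B\,g + \epsilon^{d-1}g^{-1}\Theta_{d-1}g
   = g^{-1}Bg + \epsilon^{d-1}g^{-1}(\Theta_{d-1} + \hat B_{d-1})g.
\]
This equals $\tilde\nu(g,R,B)$ iff $gRg^{-1} = \Theta_{d-1} + \hat B_{d-1}$, and the task reduces to finding $b$ realizing this. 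Projecting to $\g_\Theta$, the required equality matches the moment map condition $\pr_{\g_\Theta}(gRg^{-1}) = \Theta_{d-1} + \mu_{\ccalO_\Theta}(B)$, provided one identifies $\mu_{\ccalO_\Theta}(B) = \pr_{\g_\Theta}(\hat B_{d-1})$; this is well-defined because replacing $b$ by $b(\unit + \epsilon^{d-1} s_{d-1})$ shifts $\hat B_{d-1}$ by $[s_{d-1},\Theta_0] \in \g_\Theta^\perp$, and $\range(\ad_{\Theta_0}) = \g_\Theta^\perp$ by semisimplicity of $\Theta_0$. The residual $\g_\Theta^\perp$-discrepancy between $gRg^{-1} - \Theta_{d-1}$ and $\hat B_{d-1}$ is then absorbed by a further such adjustment of $b$, yielding $\tilde\nu(g,R,B) = (g^{-1}b)\Theta(g^{-1}b)^{-1} \in \calO_\Theta$. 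Conversely, every $A \in \calO_\Theta$ admits a factorization $A = (g^{-1}b)\Theta(g^{-1}b)^{-1}$ through $G_d(V) = G \cdot B_d(V)$, from which one recovers $(g,R,B)$ uniquely modulo the left $G_\Theta$-action on $g$, producing the inverse morphism.

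Finally, the symplectic claim follows from Marsden--Weinstein reduction along the lines used at the end of the proof of Proposition~\ref{prp:HWW}: the two summands of $\tilde\nu$ are moment maps for the $G$- and $B_d(V)$-factors acting on $T^*G$ and $\ccalO_\Theta$ respectively, so $\tilde\nu$ is a Poisson morphism to $\g_d(V)^* \simeq \g_d(V)$, and the induced map on the reduction intertwines the reduced form with the Kirillov--Kostant--Souriau form on $\calO_\Theta$. The main anticipated obstacle is verifying the moment map identification $\mu_{\ccalO_\Theta}(B) = \pr_{\g_\Theta}(\hat B_{d-1})$, which requires carefully unwinding the Hamiltonian $G_\Theta$-structure on $\ccalO_\Theta$ inherited from its inclusion in $\mathfrak{b}_d^*(V)$ and comparing it against the one visible through the HY-parametrization of Proposition~\ref{prp:HY}.
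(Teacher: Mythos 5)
Your architecture is sound and, for the substantive part of the statement, genuinely different from the paper's: the paper disposes of the quotient claim by noting that freeness forces all $G_\Theta$-orbits to have the same dimension, hence to be closed, and then invokes \cite[Theorem~4.10]{VP}; it then delegates the identification with $\calO_\Theta$ and the symplectic claim entirely to \cite[Propositions~2.6, 2.12]{HY}. You instead attempt to reprove that content directly, and your computation
\[
   (g^{-1}b)\Theta(g^{-1}b)^{-1} \;=\; g^{-1}Bg + \epsilon^{d-1}g^{-1}\bigl(\Theta_{d-1}+\hat B_{d-1}\bigr)g,
   \qquad \hat B \coloneqq b\Theta^0 b^{-1},
\]
together with the observation $\range(\ad_{\Theta_0})=\g_\Theta^{\perp}$ (valid because the $\theta_i(0)$ are pairwise distinct, so the centralizer of $\Theta_0$ is exactly $\g_\Theta$), is precisely the right mechanism; the recovery of $(g,R,B)$ from $A\in\calO_\Theta$ up to the left $G_\Theta$-action is also correct, once one checks that the residual ambiguity $b\mapsto bc_+$ with $c_+\in B_d(V)\cap G_d(V)_\Theta$ does not change the data. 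One small repair to the first claim: freeness of a reductive group action on a smooth affine variety does not by itself make the affine quotient geometric; you need the intermediate step (as in the paper) that equidimensionality of orbits forces them all to be closed.

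The genuine gap is the step you flag yourself: the identity $\mu_{\ccalO_\Theta}(B)=\pr_{\g_\Theta}(\hat B_{d-1})$. Everything hinges on it --- without it you cannot match the $\g_\Theta$-component of $gRg^{-1}-\Theta_{d-1}$ with that of $\hat B_{d-1}$, hence cannot place the image point on $\calO_\Theta$ nor run the converse direction. The issue is not purely notational: the paper characterizes $\mu_{\ccalO_\Theta}$ only as the moment map for the $G_\Theta$-action on $(\ccalO_\Theta,\omega_{\mathrm{KKS}})$ normalized by $\mu_{\ccalO_\Theta}(\Theta^0)=0$, so you must actually verify that your candidate $B\mapsto\pr_{\g_\Theta}(\hat B_{d-1})$ (which you correctly show is well defined) satisfies $\mathrm{d}\langle\mu,\xi\rangle=\iota_{\xi^{\sharp}}\omega_{\ccalO_\Theta}$ for $\xi\in\g_\Theta$; this is a real computation with the Kirillov--Kostant--Souriau form of the $B_d(V)$-coadjoint orbit, not a bookkeeping identity. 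Relatedly, your justification of the symplectic claim does not go through as written: $\epsilon^{d-1}R$ and $g^{-1}Bg$ are not the components of a moment map for any exhibited $G_d(V)$-action on $T^*G\times\ccalO_\Theta$, so one cannot conclude that the map is Poisson into $\g_d(V)^*$ by that route. The clean argument (and the content of \cite[Propositions~2.6, 2.12]{HY}) realizes both sides as reductions of $T^*G_d(V)$, or checks directly that the map pulls the KKS form on $\calO_\Theta$ back to the reduced form. These two points are exactly what remains to be supplied.
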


\begin{proof}
   The $G_\Theta$-action on 
   $\mu_{T^*G \times \ccalO_\Theta}^{-1}(-\Theta_{d-1})$ 
   is free as $G_\Theta$ acts freely on $T^*G$. 
   Hence all the $G_\Theta$-orbits have equal dimension, and hence are closed. 
   Thus \cite[Theorem~4.10]{VP} implies that the affine quotient 
   $\mu_{T^*G \times \ccalO_\Theta}^{-1}(-\Theta_{d-1})/G_\Theta$
   is a geometric quotient. 
   For the rest assertions, see \cite[Propositions~2.6, 2.12]{HY}.
\end{proof}

\begin{cor}\label{crl:affine}
   The orbit $\calO_\Theta$ is affine.
\end{cor}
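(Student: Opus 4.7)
The plan is to deduce the corollary immediately from the previous proposition together with Proposition~\ref{prp:HY} and standard affine GIT. By Proposition~\ref{prp:HY}, the orbit $\ccalO_\Theta$ is (as a variety) isomorphic to a vector space, hence affine. The cotangent bundle $T^*G \simeq G \times \g$ is also affine, so the product $T^*G \times \ccalO_\Theta$ is affine.

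Next, the level set $\mu_{T^*G \times \ccalO_\Theta}^{-1}(-\Theta_{d-1})$ is the zero fibre of a morphism to the affine space $\g_\Theta^*$, so it is a closed subscheme of $T^*G \times \ccalO_\Theta$ and therefore affine. The group $G_\Theta \simeq \prod_{i=0}^l \GL_\C(W_i)$ is reductive, so the categorical (affine) quotient $\spec \C[\mu_{T^*G \times \ccalO_\Theta}^{-1}(-\Theta_{d-1})]^{G_\Theta}$ exists and is affine. The previous proposition tells us that this affine quotient coincides with the geometric quotient and that the resulting quotient is isomorphic to $\calO_\Theta$; composing these identifications realises $\calO_\Theta$ as an affine variety.

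There is essentially no obstacle beyond invoking the previous results. The only point worth spelling out is that the geometric quotient in the previous proposition is indeed the affine GIT quotient, which follows from reductivity of $G_\Theta$ together with the fact that the action is free (so all orbits are closed and of the same dimension, and the ring of invariants separates orbits).
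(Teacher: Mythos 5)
Your argument is correct and is exactly the route the paper intends: the preceding proposition already exhibits $\calO_\Theta$ as the affine (categorical) quotient of the affine level set $\mu_{T^*G \times \ccalO_\Theta}^{-1}(-\Theta_{d-1})$ by the reductive group $G_\Theta$, and such a quotient is affine by construction. The paper leaves this as an immediate consequence, so your only addition is to spell out the (standard) affineness of the level set and of $\ccalO_\Theta$ via Proposition~\ref{prp:HY}, which is harmless.
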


\begin{cor}\label{lem3}
   Let $M$ be a non-singular affine symplectic variety 
   acted on by $G$ in Hamiltonian fashion with
   moment map $\mu_M\colon M \to \g$.
   Then for each $\zeta \in \C$, the map
   \[
      \psi \colon \ccalO_\Theta \times M \to \g_d(V) \times M; \quad
      (B,x) \mapsto (B-\epsilon^{d-1} \mu_M (x) - \epsilon^{d-1} \zeta\,\unit_V,x)
   \]
   induces an isomorphism between affine quotients
   \[
      \mu_{\ccalO_\Theta \times M}^{-1}(-\Theta_{d-1} -\zeta\,\unit_V) /G_\Theta 
      \xrightarrow{\sim}
      \mu_{\calO_\Theta \times M}^{-1}(-\zeta\,\unit_V)/G,
   \]
   where $\mu_{\ccalO_\Theta\times M}$ and 
   $\mu_{\calO_\Theta\times M}$ are the moment maps 
   \[
      \mu_{\ccalO_\Theta\times M}(B,x)=\mu_{\ccalO_\Theta}(B)+\pr_{\g_\Theta}(\mu_M(x)), \quad 
      \mu_{\calO_\Theta\times M}(A,x) = A_{d-1} + \mu_M(x) \quad 
      (B \in \ccalO_\Theta,\ A \in \calO_\Theta,\ x \in M).
   \]
\end{cor}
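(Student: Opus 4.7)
The plan is to derive Corollary~\ref{lem3} as an application of reduction in stages to the triple product $T^*G \times \ccalO_\Theta \times M$, using the preceding proposition, which realizes $\calO_\Theta$ itself as a Hamiltonian reduction of $T^*G \times \ccalO_\Theta$. On this product we have two commuting Hamiltonian actions: $G_\Theta$ acts by left translation on $T^*G$ together with the conjugation on $\ccalO_\Theta$ (trivially on $M$), while $G$ acts by right translation on $T^*G$ together with the given action on $M$ (trivially on $\ccalO_\Theta$). The respective moment maps are
\[
   \mu_{G_\Theta}(g,R,B,x) = -\pr_{\g_\Theta}(gRg^{-1}) + \mu_{\ccalO_\Theta}(B), \qquad
   \mu_{G}(g,R,B,x) = R + \mu_M(x),
\]
and I would carry out the reduction at level $(-\Theta_{d-1},-\zeta\,\unit_V)$ in the two possible orders.

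Reducing first by $G_\Theta$ at level $-\Theta_{d-1}$, the preceding proposition identifies the quotient with $\calO_\Theta$ through $(g,R,B) \mapsto \epsilon^{d-1}R + g^{-1}Bg$; taking the product with $M$ yields $\calO_\Theta \times M$ as a Hamiltonian $G$-space, and because $B \in \mathfrak{b}_d^*(V)$ has no $\epsilon^{d-1}$-term while $g \in G$ is a constant, $R$ coincides with the $\epsilon^{d-1}$-coefficient $A_{d-1}$ of the resulting $A \in \calO_\Theta$, so the residual $G$-moment map is exactly $\mu_{\calO_\Theta \times M}$. A further $G$-reduction at $-\zeta\,\unit_V$ then gives the right-hand side of the stated isomorphism. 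Reducing in the opposite order, the freeness of right $G$-translation on $T^*G$ allows $\{g = e\}$ as a global slice for the $G$-action on $\mu_G^{-1}(-\zeta\,\unit_V)$; on this slice the condition $R = -\mu_M(x) - \zeta\,\unit_V$ holds, giving the identification of the $G$-quotient with $\ccalO_\Theta \times M$ carrying the diagonal $G_\Theta$-action. Substituting the slice data into $\mu_{G_\Theta}$ and using $\pr_{\g_\Theta}(\unit_V) = \unit_V$ (since $\unit_V \in \g_\Theta$) yields the residual $G_\Theta$-moment map $\mu_{\ccalO_\Theta \times M}(B,x) + \zeta\,\unit_V$, so reducing by $G_\Theta$ at level $-\Theta_{d-1}$ cuts out precisely $\mu_{\ccalO_\Theta \times M}^{-1}(-\Theta_{d-1} - \zeta\,\unit_V)/G_\Theta$, the left-hand side. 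Tracing a slice point $(B,x)$ through the preceding proposition's formula reproduces the explicit map $\psi$.

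The main obstacle is ensuring that reduction in stages is valid for \emph{affine} quotients rather than merely topological or smooth ones, since the statement asserts an isomorphism of schemes. This is handled by noting that both the $G_\Theta$-action (principal bundle by the preceding proposition) and the $G$-action (free on the $T^*G$ factor) act freely on their respective level sets; combined with the iterated invariants identity $\C[\,\cdot\,]^{G_\Theta \times G} = (\C[\,\cdot\,]^G)^{G_\Theta} = (\C[\,\cdot\,]^{G_\Theta})^G$, this yields an honest isomorphism of affine quotients. A subtle but essential point is the central shift by $\zeta\,\unit_V$: because $\unit_V$ lies in the center of $\g_\Theta$, the relation $\pr_{\g_\Theta}(\unit_V) = \unit_V$ introduces the extra $\zeta\,\unit_V$ term in the residual $G_\Theta$-moment map, which exactly accounts for the difference between the reduction level $-\Theta_{d-1}$ (natural for the residual map) and $-\Theta_{d-1} - \zeta\,\unit_V$ (natural for the intrinsic $\mu_{\ccalO_\Theta \times M}$).
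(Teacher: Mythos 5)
Your proposal is correct and follows essentially the same route as the paper: reduction in stages on $T^*G \times \ccalO_\Theta \times M$ under the commuting $G_\Theta$- and $G$-actions, using the slice $\{g=\unit_V,\ R=-\mu_M(x)-\zeta\,\unit_V\}$ to identify the $G$-reduction with $\ccalO_\Theta \times M$ and observing the $\zeta\,\unit_V$ shift in the residual $G_\Theta$-moment map. Your additional remarks on $A_{d-1}=R$ along the slice and on the iterated-invariants identity for affine quotients only make explicit points the paper leaves implicit.
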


\begin{proof}
   By the above proposition, the Hamiltonian reduction of 
   $\calO_\Theta \times M$ by the $G$-action at level $-\zeta\,\unit_V$ 
   is isomorphic to that of 
   $T^*G \times \ccalO_\Theta \times M$ 
   by the $G_\Theta \times G$-action 
   \[
      (u,v) \colon (g,R,B,x) \mapsto (ugv^{-1},vRv^{-1},uBu^{-1},v \cdot x),
      \quad (u,v) \in G_\Theta \times G
   \]
   at level $(-\Theta_{d-1},-\zeta\,\unit_V)$. 
   If we first perform the Hamiltonian reduction by $G$, 
   then the result is isomorphic to $\ccalO_\Theta \times M$ 
   via the map
   \[
      \ccalO_\Theta \times M \to T^*G \times \ccalO_\Theta \times M;
      \quad 
      (B,x) \mapsto (\unit_V,-\mu_M(x)-\zeta\,\unit_V,B,x),
   \]
   with the induced $G_\Theta$-moment map equal to $\mu_{\ccalO_\Theta\times M} + \zeta\,\unit_V$.
   Thus performing further the Hamiltonian reduction by $G_\Theta$, 
   we obtain a desired isomorphism 
   \[
    \mu_{\ccalO_\Theta \times M}^{-1}(-\Theta_{d-1} -\zeta\,\unit_V) /G_\Theta 
    \xrightarrow{\sim} \mu_{\calO_\Theta \times M}^{-1}(-\zeta\,\unit_V)/G,
    \] 
   which is explicitly given by
   $(B,x) \mapsto (-\epsilon^{d-1}(\mu_M(x)+\zeta\,\unit_V)+B,x)$.
\end{proof}

\subsection{Irregular legs and regularization}

Let $\sQ=(I,\Omega,s,t)$ be a quiver with multiplicities $\bd$. For integers $i<j$ we put $[i,j] \coloneqq \{i,i+1, \dots ,j\}$.

\begin{defin}
   $(\sQ,\bd)$ is said to have an \emph{irregular leg} 
   if there exists a sequence of pairwise distinct vertices such that, 
   if we denote it by $0,1, \dots ,l$, 
   then $l>0$ and the following hold:   
   \begin{enumerate}
      \item vertices $i,j$ in $[0,l]$ are connected by exactly one arrow if $\lvert i-j \rvert =1$, and otherwise no arrow connects them; 
      \item no arrow connects any $i \in I \setminus [0,l]$ and $j \in [1,l]$;
      \item $d_0=1$ and $d_i=d$ ($i=1,2, \dots ,l$) for some integer $d>1$.
   \end{enumerate}
\end{defin}

In what follows we consider such a quiver with multiplicities, 
and for simplicity, assume that the arrows 
connecting $0,1, \dots ,l$ are oriented as $0 \to 1 \to \cdots \to l$. 
We denote by $\sQ_\leg = ([1,l],\Omega_\leg,s,t)$ 
the subquiver $1 \to 2 \to \cdots \to l$ and call it 
the \emph{irregular leg} of $(\sQ,\bd)$ with \emph{base} $0$. 

\begin{defin}
   Let $\check{\sQ}=(I,\check{\Omega},s,t)$ be the quiver  
   obtained from $\sQ$ by the following procedure:
   \begin{enumerate}
      \item first, delete the $l$ arrows $0 \to 1 \to \cdots \to l$; then 
      \item for each arrow $h$ with $t(h)=0$ and each $i \in [1,l]$, add an arrow from $s(h)$ to $i$;
      \item for each arrow $h$ with $s(h)=0$ and each $i \in [1,l]$, add an arrow from $i$ to $t(h)$;
      \item finally, for each pair $i<j$ in $[0,l]$, add $(d-2)$ arrows from $i$ to $j$.
   \end{enumerate}
   Also, define $\check{\bd}=(\check{d}_i)$ by 
   \[
      \check{d}_i = 
      \begin{cases}
         1 & (i \in [1,l]), \\
         d_i & (i \in I \setminus [1,l]).
      \end{cases}
   \] 
   We call $(\check{\sQ},\check{\bd})$ 
   the \emph{regularization} of $(\sQ,\bd)$ along the irregular leg $\sQ_\leg$.
   In Example \ref{example} we will give some examples of regularization.
\end{defin}

\begin{rem}
   When $l=1$, the regularization is the same as 
   the \emph{normalization} introduced by the second named author in 
   \cite{YD1}.
\end{rem}

We define a map $R_{\bd} \times \Z^I \to R_{\check{\bd}} \times \Z^I$ as follows.
For $\bv=(v_i) \in \Z^I$, define $\check{\bv}=(\check{v}_i) \in \Z^I$ by 
\[
   \check{v}_i = 
   \begin{cases}
      v_i - v_{i+1} & (i \in [0,l-1]), \\
      v_i & (\text{otherwise}).
   \end{cases}
\]
Also, for $\bmlam=(\lambda_i) \in R_{\bd}$, 
define $\check{\bmlam} = (\check{\lambda}_i) \in R_{\check{\bd}}$ by 
\begin{equation}\label{eq:lambda}
   \check{\lambda}_i = 
   \begin{cases}
      \lambda_0 & (i=0), \\
      \lambda_0 + \sum_{j=1}^i \lambda_{j,d-1} & (i \in [1,l]), \\
      \lambda_i & (\text{otherwise}).
   \end{cases}
\end{equation}

The following theorem generalizes \cite[Theorem~5.8]{YD1}.
\begin{thm}\label{main}
   Let $(\sQ,\bd)$ be a quiver with multiplicities 
   having an irregular leg $\sQ_\leg$ as above, 
   and let $(\check{\sQ},\check{\bd})$ be 
   the regularization of $(\sQ,\bd)$ along $\sQ_\leg$. 
   Take a pair $(\bmlam,\bv) \in R_{\bd} \times \Z_{\geq 0}^I$ satisfying 
   the following conditions:
   \begin{enumerate}
      \item $\check{v}_i \geq 0$ for all $i \in [0,l-1]$;
      \item $\lambda_i + \lambda_{i+1} + \cdots + \lambda_j \in R_d^\times$ for all pairs $i \leq j$ in $[1,l]$.
   \end{enumerate}
   Then 
   $\QS_{\sQ,\bd}(\bmlam,\bv)$ and 
   $\QS_{\check{\sQ},\check{\mathbf{d}}}(\check{\bmlam},\check{\bv})$ 
   are isomorphic.
\end{thm}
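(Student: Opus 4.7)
The plan is to realize $\QS_{\sQ,\bd}(\bmlam,\bv)$ as a Hamiltonian reduction in two stages, first reducing at the leg vertices $[1,l]$ via Proposition~\ref{prp:HWW}, and then applying the shifting trick of Corollary~\ref{lem3} together with Proposition~\ref{prp:HY} to recover the regularized scheme. Using condition (i), fix a decomposition $V_0 = \bigoplus_{i=0}^l W_i$ with $\dim W_i = \check v_i$, and set $V_i = \bigoplus_{j \geq i} W_j$ for $i \in [1,l]$, so that $\dim V_i = v_i$. Accordingly $\bM_{\sQ,\bd}(\bV)$ splits into a ``leg part'' $\bM_{\text{leg}}$, consisting of the $\C$-linear maps $a \colon V_0 \to V_1 \otimes R_d$ and $b \colon V_1 \otimes R_d \to V_0$ together with the $R_d$-linear arrows along $\sQ_\leg$, and a complementary ``rest'' $\bM_{\text{rest}}$; likewise $G_\bd(\bV) = \prod_{i=1}^l G_d(V_i) \times G_{d_0}(V_0) \times G^{\text{ext}}$ with $G^{\text{ext}} \coloneqq \prod_{j \in I \setminus [0,l]} G_{d_j}(V_j)$.

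Setting $\theta_0 \coloneqq 0$ and $\theta_i \coloneqq \lambda_1 + \cdots + \lambda_i$ for $i \in [1,l]$, condition (ii) guarantees that $\theta_i - \theta_j \in R_d^\times$ whenever $i \neq j$. The space $\bM_{\text{leg}}$ is exactly the ambient space of Proposition~\ref{prp:HWW} for the $d$-leg of length $l$ with base $V = V_0$, and the leg moment map conditions $\mu_{\bV,i} = -\lambda_i\,\unit_{V_i \otimes R_d}$ ($i \in [1,l]$) match the hypothesis $\lambda_i = \theta_i - \theta_{i-1}$. Hence the Hamiltonian reduction of $\bM_{\text{leg}}$ by $\prod_{i=1}^l G_d(V_i)$ at these levels is the coadjoint orbit $\calO_\Theta \subset \g_d(V_0)$ of $\Theta \coloneqq \bigoplus_{i=0}^l \theta_i\,\unit_{W_i \otimes R_d}$, via $\nu(\bB) = -b^{R_d}\,a^{R_d}$.

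A direct calculation from the formulas in Section~\ref{subsec:pre} shows that the $\epsilon^{d-1}$-coefficient of $b^{R_d}\,a^{R_d} \in \g_d(V_0)$ equals $b \circ a \in \gl_\C(V_0)$. Therefore, for $A = \nu(\bB) \in \calO_\Theta$, the vertex-$0$ moment map relation $-b \circ a + \mu_{\text{rest},0}(\bB_{\text{rest}}) = -\lambda_0\,\unit_{V_0}$ rewrites as $A_{d-1} + \mu_{\text{rest},0} = -\lambda_0\,\unit_{V_0}$, which is exactly the setting of Corollary~\ref{lem3} with $G = \GL_\C(V_0) = G_{d_0}(V_0)$, $M = \bM_{\text{rest}}$, and $\zeta = \lambda_0$. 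Applying that corollary replaces $\calO_\Theta$ by $\ccalO_\Theta$ and the $\GL_\C(V_0)$-action by that of $G_\Theta \coloneqq \prod_{i=0}^l \GL_\C(W_i)$, shifting the level to $-\Theta_{d-1} - \lambda_0\,\unit_{V_0}$, whose $W_i$-block is $-\bigl(\lambda_0 + \sum_{j=1}^i \lambda_{j,d-1}\bigr)\unit_{W_i} = -\check{\lambda}_i\,\unit_{W_i}$, matching \eqref{eq:lambda} exactly.

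Finally, Proposition~\ref{prp:HY} provides a $G_\Theta$-equivariant symplectic isomorphism of $\ccalO_\Theta$ with $\bigoplus_{i \neq j} \Hom_\C(W_i, W_j)^{\oplus(d-2)}$, which is precisely the symplectic space of the $(d-2)$ arrows added between each pair in $[0,l]$ in the definition of $\check{\sQ}$. Under the same decomposition $V_0 = \bigoplus W_i$, every external arrow at vertex $0$ in $\bM_{\text{rest}}$ decomposes into the arrows between the external vertex and each $i \in [0,l]$ as prescribed by the regularization, while $G_\Theta \times G^{\text{ext}}$ coincides with $G_{\check{\bd}}(\check{\bV})$. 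Assembling these identifications yields $\QS_{\check{\sQ},\check{\bd}}(\check{\bmlam},\check{\bv})$. The main technical obstacle is verifying that the $G_\Theta$-moment map on $\ccalO_\Theta$, expressed through Proposition~\ref{prp:HY}, coincides under this identification with the quiver moment map contributions at the vertices $[0,l]$ coming from the $(d-2)$-fold arrows of $\check{\sQ}$; this requires tracking the symplectic structure of $\ccalO_\Theta$ explicitly through the isomorphism of \cite{HY}.
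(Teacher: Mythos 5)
Your proposal is correct and follows essentially the same route as the paper: the same choice of $V_i = \bigoplus_{j\geq i} W_j$, the same $\Theta$ with $\theta_i = \lambda_1+\cdots+\lambda_i$, reduction along the leg via Proposition~\ref{prp:HWW}, the shift by $\zeta=\lambda_0$ via Corollary~\ref{lem3}, and the identification of $\ccalO_\Theta$ with the added arrows via Proposition~\ref{prp:HY}. The ``technical obstacle'' you flag at the end is not actually one: since Proposition~\ref{prp:HY} gives a $G_\Theta$-equivariant \emph{symplectic} isomorphism sending $\Theta^0$ to the origin, and moment maps for a connected group on a connected symplectic manifold are unique up to an invariant additive constant, the two normalizations (both vanishing at the corresponding base points) force the moment maps to agree.
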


\begin{proof}
   By the first condition on $(\bmlam,\bv)$, 
   the sequence $v_0,v_1, \dots ,v_l$ is non-increasing. 
   We take $I$-graded $\C$-vector spaces $\bV, \check{\bV}$ 
   so that $\dim \bV = \bv$, $V_0 \supset V_1 \supset \cdots \supset V_l$, and  
   \[
      V_i = 
      \begin{cases}
         \check{V}_i \oplus V_{i+1} & (i \in [0,l-1]), \\
         \check{V}_i & (\text{otherwise}).
      \end{cases} 
   \]
   Then $\dim \check{\bV} = \check{\bv}$ and $V_0 = \bigoplus_{i=0}^l \check{V}_i$. 

   In what follows, for a subset $L \subset I$, 
   the suffix $L$ means the restriction of the index set to $L$; for instance,  
   \[
      \bV_L = \bigoplus_{i \in L} V_i, \quad 
      \bd_L = (d_i)_{i \in L}, \quad 
      \bmlam_L = (\lambda_i)_{i \in L}, \quad 
      \mu_{\bV,L} = (\mu_{\bV,i})_{i \in L}.   
   \]
   Let $\sQ_J$ be the full subquiver of $\sQ$ with vertex set $J \coloneqq I \setminus [1,l]$. 
   Then 
   \[
      \bM_{\sQ,\bd}(\bV) = \bM_{\sQ_J,\bd_J}(\bV_J) \oplus 
      \bM_{\sQ_\leg,\bd_{[1,l]}}(\bV_{[1,l]}) \oplus 
      \Hom_\C(V_0,V_1 \otimes R_d) \oplus \Hom_\C(V_1 \otimes R_d,V_0).
   \]
   Also, let $\sQ_K$ be the full subquiver of $\sQ$ with vertex set $K \coloneqq J \setminus \{0\}$. 
   Then $\sQ_K$ is also a subquiver of $\check{\sQ}$ and 
   $\check{d}_i = d_i$, $\check{V}_i = V_i$ for all $i \in K$. Hence 
   \begin{align*}
      \bM_{\check{\sQ},\check{\bd}}(\check{\bV}) = 
      \bM_{\sQ_K,\bd_K}(\bV_K) 
      &\oplus \bigoplus_{i=0}^l 
      \left( \bigoplus_{\substack{t(h) \in K \\ s(h)=i}} \Hom_\C(\check{V}_i,\check{V}_{t(h)} \otimes R_{\check{d}_{t(h)}}) 
      \oplus \bigoplus_{\substack{s(h) \in K \\ t(h)=i}} \Hom_\C(\check{V}_{s(h)} \otimes R_{\check{d}_{s(h)}},\check{V}_i) \right) \\
      &\oplus \bigoplus_{i,j \in [0,l];\, i \neq j} \Hom_\C(\check{V}_i,\check{V}_j)^{\oplus (d-2)}.
   \end{align*}
   By the definition of $\check{\sQ}$ 
   and the equality $V_0 = \bigoplus_{i=0}^l \check{V}_i$, 
   we obtain a canonical isomorphism 
   \[
      \bM_{\check{\sQ},\check{\bd}}(\check{\bV}) \simeq 
      \bM_{\sQ_J,\bd_J}(\bV_J) \oplus 
      \bigoplus_{i,j \in [0,l];\, i \neq j} \Hom_\C(\check{V}_i,\check{V}_j)^{\oplus (d-2)}.
   \]
   Now define $\Theta \in \g_d(V_0)$ by 
   \[
      \Theta = \bigoplus_{i=0}^l \theta_i\,\unit_{\check{V}_i}, \quad 
      \theta_i = 
      \begin{cases}
         0 & (i=0), \\
         \lambda_1 + \cdots + \lambda_i & (i>0).
      \end{cases}
   \]
   Then $\theta_i - \theta_{i-1} = \lambda_i$ for $i \in [1,l]$ and 
   \[
      G_\Theta = \prod_{i=0}^l \GL_\C(\check{V}_i) 
      = G_{\check{\bd}_{[0,l]}}(\check{\bV}_{[0,l]}).
   \] 
   The second condition on $(\bmlam,\bv)$ implies that 
   $\theta_i - \theta_j \in R_d^\times$ whenever $i \neq j$. 
   Therefore Proposition~\ref{prp:HY} implies that there exists an isomorphism 
   \[
      \bM_{\check{\sQ},\check{\bd}}(\check{\bV}) \simeq 
      \bM_{\sQ_J,\bd_J}(\bV_J) \times \check{\calO}_\Theta.
   \]
   On the other hand, Proposition~\ref{prp:HWW} implies that 
   the $G_{\bd_{[1,l]}}(\bV_{[1,l]})$-action on 
   $\mu_{\bV,[1,l]}^{-1}(-\bmlam_{[1,l]}\,\unit_{\bV_{[1,l]}})$ 
   has a geometric quotient isomorphic to 
   the affine variety $\bM_{\sQ_J,\bd_J}(\bV_J) \times \calO_\Theta$. 
   Therefore Corollary~\ref{lem3} shows that there exists an isomorphism 
   between affine varieties 
   \[
      \mu_{\bV,[0,l]}^{-1}(-\bmlam_{[0,l]}\,\unit_{\bV_{[0,l]}})/G_{\bd_{[0,l]}}(\bV_{[0,l]}) 
      \simeq \mu_{\check{\bV},[0,l]}^{-1}(-\check{\bmlam}_{[0,l]}\,\unit_{\check{\bV}_{[0,l]}})/G_{\check{\bd}_{[0,l]}}(\check{\bV}_{[0,l]}).
   \]
   Taking the affine quotients (as schemes) 
   of the level sets of the $G_{\bd_K}(\bV_K)$-moment maps on both sides, 
   we obtain a desired isomorphism 
   $\QS_{\sQ,\bd}(\bmlam,\bv) \simeq 
   \QS_{\check{\sQ},\check{\mathbf{d}}}(\check{\bmlam},\check{\bv})$.   
\end{proof}

The following corollary is useful.

\begin{cor}\label{crl:main}
   Let $(\sQ,\bd)$ be a quiver with multiplicities 
   having an irregular leg $\sQ_\leg$ as above with $l=1$, 
   and let $(\check{\sQ},\check{\bd})$ be 
   the regularization of $(\sQ,\bd)$ along $\sQ_\leg$. 
   Take a pair $(\bmlam,\bv) \in R_{\bd} \times \Z_{\geq 0}^I$ 
   so that $\lambda_1 \in R_d^\times$.
   Then 
   $\QS_{\sQ,\bd}(\bmlam,\bv)$ and 
   $\QS_{\check{\sQ},\check{\mathbf{d}}}(\check{\bmlam},\check{\bv})$ 
   are isomorphic.
\end{cor}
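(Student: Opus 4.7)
The plan is to deduce Corollary~\ref{crl:main} from Theorem~\ref{main} by a short case analysis, with the observation that the only ``missing'' hypothesis is the non-negativity of $\check{v}_0$. When $l=1$, the second hypothesis of Theorem~\ref{main}, namely $\lambda_i + \cdots + \lambda_j \in R_d^\times$ for all pairs $i \leq j$ in $[1,l]$, reduces to the single requirement $\lambda_1 \in R_d^\times$, which is exactly the assumption of the corollary. Thus the only condition not automatically available is the first one, $\check{v}_0 = v_0 - v_1 \geq 0$.

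In the case $v_0 \geq v_1$, both hypotheses of Theorem~\ref{main} are satisfied and the desired isomorphism follows at once.

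In the remaining case $v_0 < v_1$, I would show that both quiver schemes are empty. For the regularized side, $\check{v}_0 = v_0 - v_1 < 0$, so by the convention in Definition~\ref{quiverscheme} we have $\QS_{\check{\sQ},\check{\mathbf{d}}}(\check{\bmlam},\check{\bv}) = \emptyset$. For the original side, since $\sQ_\leg$ is an irregular leg with $l=1$, vertex $1$ is connected only to vertex $0$ (by the second and third defining properties of an irregular leg), so the formula $\dim \widetilde{V}_i = \sum_{j} a_{ij}(d_j/d_{ij}) v_j$ specialized to $i=1$ gives $\dim \widetilde{V}_1 = v_0 < v_1 = \dim V_1$. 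Combined with $\lambda_1 \in R_d^\times$, Proposition~\ref{lem1}(i) applied at vertex $1$ yields $\mu_{\bV,1}^{-1}(-\lambda_1\,\unit_{V_1 \otimes R_d}) = \emptyset$, hence $\mu_\bV^{-1}(-\bmlam\,\unit_\bV) = \emptyset$ and $\QS_{\sQ,\bd}(\bmlam,\bv) = \emptyset$.

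Since the reduction is entirely a verification exercise, I do not foresee any substantive obstacle; the only point requiring a little care is the computation of $\dim \widetilde{V}_1$, which relies on the isolating properties of an irregular leg that ensure vertex $1$ has no neighbours in $I \setminus \{0,1\}$.
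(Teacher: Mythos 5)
Your proposal is correct and follows essentially the same route as the paper: reduce to Theorem~\ref{main} when $\check{v}_0 \geq 0$, and show both schemes are empty when $\check{v}_0 < 0$ (the paper attributes both emptiness claims to Proposition~\ref{lem1}, while you more precisely invoke the convention of Definition~\ref{quiverscheme} for the regularized side and Proposition~\ref{lem1}(i) at vertex $1$ for the original side). Your computation $\dim \widetilde{V}_1 = v_0$ using the isolating properties of the leg is exactly the needed verification.
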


\begin{proof}
   If $\check{v}_0 < 0$, then Proposition~\ref{lem1} implies that 
   both $\QS_{\sQ,\bd}(\bmlam,\bv)$ and 
   $\QS_{\check{\sQ},\check{\mathbf{d}}}(\check{\bmlam},\check{\bv})$ 
   are empty. If $\check{v}_0 \geq 0$, then they are isomorphic by the above theorem.
\end{proof}

Using the above corollary we can show that some quiver schemes 
are algebraic varieties.

\begin{cor}
   Let $(\sQ,\bd)$ be a quiver with multiplicities 
   and put $I_{\mathrm{irr}} \coloneqq \{\, i\in I \mid d_i >1\,\}$.  
   Suppose that each $i \in I_{\mathrm{irr}}$ is an irregular leg of length one, 
   and any distinct pair $i \neq j$ in $I_{\mathrm{irr}}$ has distinct bases.
   Take a pair $(\bmlam,\bv) \in R_{\bd} \times \Z_{\geq 0}^I$ 
   so that $\lambda_i \in R_{d_i}^\times$ for any $i \in I_{\mathrm{irr}}$.
   Then 
   $\QS_{\sQ,\bd}(\bmlam,\bv)$ is a variety.
\end{cor}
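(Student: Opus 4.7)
The plan is to argue by induction on $n \coloneqq |I_{\mathrm{irr}}|$, repeatedly applying Corollary~\ref{crl:main} to peel off one irregular leg at a time and reduce to the multiplicity-free case, where the quiver scheme coincides with Nakajima's affine quiver variety and is known to be an affine algebraic variety. For the base case $n=0$, every $d_i$ equals $1$, so $\QS_{\sQ,\bd}(\bmlam,\bv)$ is a classical Nakajima quiver variety $\mathfrak{M}_0(\bv,0)$ associated to a graph without edge-loops; the fact that such a scheme is reduced (and hence a variety) is a standard consequence of the theory of preprojective algebras (e.g.\ from Crawley--Boevey's results on the flatness/complete-intersection properties of the moment map).

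For the inductive step ($n \geq 1$), I would pick any $i_0 \in I_{\mathrm{irr}}$. By hypothesis, $i_0$ is the tip of an irregular leg of length one with some base $b_{i_0}$, and $\lambda_{i_0} \in R_{d_{i_0}}^\times$. Corollary~\ref{crl:main} then supplies a regularization $(\check{\sQ},\check{\bd})$ and a pair $(\check{\bmlam},\check{\bv})$ with
\[
   \QS_{\sQ,\bd}(\bmlam,\bv) \simeq \QS_{\check{\sQ},\check{\bd}}(\check{\bmlam},\check{\bv}),
\]
in which $\check{d}_{i_0}=1$ and $\check{d}_j = d_j$ for $j\neq i_0$, so the new irregular set is $\check{I}_{\mathrm{irr}} = I_{\mathrm{irr}} \setminus \{i_0\}$.

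Next I would verify that $(\check{\sQ},\check{\bd},\check{\bmlam},\check{\bv})$ still satisfies the hypotheses of the corollary, so that induction applies. Fix $j \in \check{I}_{\mathrm{irr}}$: the only neighbor of $j$ in $\sQ$ is $b_j$, and since $b_j \neq b_{i_0}$ (distinct bases) and $j \neq b_{i_0}$ (because $d_j>1 = d_{b_{i_0}}$), the arrows added during regularization at $i_0$—which link $i_0$ only to neighbors of $b_{i_0}$, together with $(d_{i_0}-2)$ parallel arrows between $b_{i_0}$ and $i_0$—are never incident to $j$. Hence $j$ remains an irregular leg of length one in $(\check{\sQ},\check{\bd})$ with the same base $b_j$. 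The bases $\{b_j\}_{j \in \check{I}_{\mathrm{irr}}}$ are a subset of the originals and therefore still pairwise distinct, and formula~\eqref{eq:lambda} (with $l=1$) gives $\check{\lambda}_j = \lambda_j \in R_{d_j}^\times = R_{\check{d}_j}^\times$. The inductive hypothesis then yields that $\QS_{\check{\sQ},\check{\bd}}(\check{\bmlam},\check{\bv})$ is a variety.

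The main obstacle is the base case: one must invoke (or supply) a proof that the classical (multiplicity-free) quiver scheme, i.e.\ the affine GIT quotient $\spec \C[\mu_\bV^{-1}(-\bmlam\,\unit_\bV)]^{G_\bd(\bV)}$ for a quiver without loops, has a reduced coordinate ring. The inductive step itself is essentially bookkeeping, but requires a careful check that the extra edges produced in step~(4) of the regularization (the $(d_{i_0}-2)$ new arrows between $b_{i_0}$ and $i_0$) and the branches grafted in steps~(2)--(3) never accidentally touch the remaining legs; this is where the distinct-bases hypothesis plays its crucial role.
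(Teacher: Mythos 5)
Your proposal is correct and follows essentially the same route as the paper, whose entire proof is the one-line observation that applying Corollary~\ref{crl:main} to each $i \in I_{\mathrm{irr}}$ yields an isomorphism onto a multiplicity-free quiver scheme, i.e.\ a Nakajima quiver variety. Your explicit induction, the verification (via the distinct-bases hypothesis) that the remaining legs and the units $\check{\lambda}_j=\lambda_j$ survive each regularization, and your flagging of the reducedness of the multiplicity-free case are all details the paper leaves implicit, the last being taken for granted there via the stated identification with Nakajima's quiver variety.
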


\begin{proof}
   Applying Corollary~\ref{crl:main} to each $i \in I_{\mathrm{irr}}$, 
   we obtain an isomorphism from $\QS_{\sQ,\bd}(\bmlam,\bv)$ 
   to Nakajima's quiver variety.
\end{proof}

Here are some examples.
\begin{exa}\label{example}
   (i) Consider the quiver with multiplicities
   $(\sQ,\bd)$ given in \cite[Example 5.6 (i),(ii)]{YD1},
   which has the following underlying graph with multiplicities. 
   \[
      \hfill
      \begin{xy}
         (0,0) *++!D{d} *\cir<4pt>{}*++!U{[1]};
         (10,0)*++!D{1} *\cir<4pt>{}*++!U{[0]};
         (20,0) *++!D{1} *\cir<4pt>{};
         (35,0)*++!D{1} *\cir<4pt>{};
         \ar@{-} (22,0);(26,0)
         \ar@{.} (26,0);(28,0)^*!U{\cdots}
         \ar@{-} (29,0);(33,0)
         \ar@{-} (2,0);(8,0)
         \ar@{-} (12,0);(18,0)

      \end{xy}
      \hfill
   \]
   Here the number of vertices is $n \geq 2$ and $[0], [1]$ 
   are labels of vertices ($d,1,1, \dots ,1$ are the multiplicities). 
   Then the regularization $(\check{\sQ},\check{\mathbf{d}})$
   has the following underlying graph with multiplicities.
   \[
      \begin{xy}
         (0,0) *++!D{1} *\cir<4pt>{}*++!U{[1]};
         (15,0)*++!D{1} *\cir<4pt>{}*++!U{[0]};
         \ar@/^2pt/ @{-} (2,1);(13,1)
         \ar@/_2pt/ @{-}(2,-1);(13,-1)_{d-2}
         \ar@{.} (7.5,0.5);(7.5,-0.5)
      \end{xy}
      \ (n=2),
      \qquad
      \begin{xy}
         (15,0) *++!D{1} *\cir<4pt>{};
         (0,8.61)   *++!R{[1]} *\cir<4pt>{}*++!D{1};
         (0,-8.61) *++!R{[0]} *\cir<4pt>{}*++!D{1};
         (30,0) *++!D{1} *\cir<4pt>{};
         \ar@{-} (13,-1);(2,-7.61)
         \ar@/^5pt/ @{-} (1,6.61);(1,-6.61)
         \ar@/_5pt/ @{-} (-1,6.61);(-1,-6.61)_{d-2}
         \ar@{-}(13,1);(2,7.61)
         \ar@{.} (-1.4,0);(1.4,0)
         \ar@{-} (17,0);(21,0)
         \ar@{.} (22,0);(23,0)^*!U{\cdots}
         \ar@{-} (24,0);(28,0)
      \end{xy}
      \ (n \geq 3)
   \]
   Since it is multiplicity-free,
   Corollary~\ref{crl:main} implies that $\QS_{\sQ,\bd}(\bmlam,\bv)$ is a variety if $\lambda_{[1]}(0) \neq 0$.

   (ii) Consider the quiver with multiplicities
   $(\sQ,\bd)$ given in \cite[Example 5.6 (iii)]{YD1},
   which has the following underlying graph with multiplicities.
   \[
      \hfill
      \begin{xy}
         (0,0) *++!D{2}*\cir<4pt>{}*++!U{[1']};
         (10,0)*++!D{1}*\cir<4pt>{}*++!U{[0']};
         (25,0) *++!D{1}*\cir<4pt>{}*++!U{[0]};
         (35,0)*++!D{2}*\cir<4pt>{}*++!U{[1]};

         \ar@{-} (2,0);(8,0)
         \ar@{-} (12,0);(16,0)
         \ar@{.} (16,0);(18,0)^*!U{\cdots}
         \ar@{-} (19,0);(23,0)
         \ar@{-} (27,0);(33,0)
      \end{xy}
      \hfill
   \]
   Here the number of vertices is $n \geq 4$. 
   Then one can perform the regularization twice, 
   and the resulting quiver with multiplicities 
   $(\check{\sQ},\check{\mathbf{d}})$
   has the following graph with multiplicities. 
   \[
      \hfill
      \begin{xy}
         (0,-7.5) *++!U{1} *\cir<4pt>{} *++!R{[1']};
         (15,-7.5) *++!U{1} *\cir<4pt>{}*++!L{[0]};
         (0,7.5) *++!D{1} *\cir<4pt>{}*++!R{[1]};
         (15,7.5) *++!D{1} *\cir<4pt>{}*++!L{[0']};
         \ar@{-} (2,-7.5);(13,-7.5)
         \ar@{-} (0,-5.5);(0,5.5)
         \ar@{-} (15,-5.5);(15,5.5)
         \ar@{-} (2,7.5);(13,7.5)
      \end{xy}
      \ (n=4),
      \qquad
      \begin{xy}
         (5,8.61) *++!D{1} *++!R{[1']} *\cir<4pt>{};
         (5,-8.61) *++!D{1} *++!R{[0']} *\cir<4pt>{};
         (10,0)   *++!D{1} *\cir<4pt>{};
         (25,0) *++!D{1} *\cir<4pt>{};
         (30,8.61) *++!D{1} *++!L{[0]} *\cir<4pt>{};
         (30,-8.61) *++!D{1} *++!L{[1]} *\cir<4pt>{};
         \ar@{-}(6,7.11);(9,2)
         \ar@{-} (6,-7.11);(9,-2)
         \ar@{-} (12,0);(16,0)
         \ar@{.} (17,0);(18,0)^*!U{\cdots}
         \ar@{-} (19,0);(23,0)
         \ar@{-} (26,2);(29,7.11)
         \ar@{-} (26,-2);(29,-7.11)
      \end{xy}
      \ (n \geq 5)
      \hfill
   \]
   Since it is multiplicity-free,
   Corollary~\ref{crl:main} implies that $\QS_{\sQ,\bd}(\bmlam,\bv)$
   is a variety if $\lambda_{[1]}(0), \lambda_{[1']}(0) \neq 0$.
\end{exa}

\subsection{Weyl groups and regularization}

Let $(\sQ,\bd)$ be a quiver with multiplicities 
having an irregular leg $\sQ_{\leg} = ([1,l],\Omega_\leg,s,t)$, 
and let $(\check{\sQ},\check{\bd})$ be the one obtained by 
the regularization of $(\sQ,\bd)$ along $\sQ_\leg$.
We denote by $\check{\bC} =2\unit - \check{\bA'}\check{\bD}$ 
the generalized Cartan matrix associated to $(\check{\sQ},\check{\bd})$,
and by $\check{\h},\check{Q},\check{\alpha}_k,\check{s}_k$
the Cartan subalgebra, the root lattice, the simple roots 
and simple reflections, of the Kac--Moody algebra with Cartan matrix $\check{\bC}$.
In this subsection we give some relationship between 
the two Weyl groups $W(\bC)$ and $W(\check{\bC})$ as in \cite[Section~5.3]{YD1}.

Define a homomorphism $\varphi\colon Q\to \check{Q}$ 
by $\bv \mapsto \check{\bv}=\bv-\sum_{i\in [0,l-1]}v_{i+1}\check{\alpha}_i$.

\begin{lem}
   If we regard $\varphi$ as an element of $\Hom_\Z(\Z^I,\Z^I)=\Z^{I \times I}$, 
   then $^t\varphi\check{\bD}\check{\bC}\varphi=\bD\bC$.
\end{lem}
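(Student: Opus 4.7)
The plan is to recognize the identity as an equality of symmetric bilinear forms on the root lattice. By Remark~\ref{rem:dim}, the form associated to $(\sQ,\bd)$ is
\[
   (\bv,\bw) \coloneqq {}^t\bv\,\bD\bC\,\bw = 2\sum_{i \in I} d_i v_i w_i
   - \sum_{i,j \in I} \frac{a_{ij}}{d_{ij}}\,d_i d_j\,v_i w_j,
\]
and likewise for $(\check{\sQ},\check{\bd})$; the claim is equivalent to showing that the $(\check{\sQ},\check{\bd})$-form evaluated at $(\check{\bv},\check{\bw}) = (\varphi(\bv),\varphi(\bw))$ agrees with the $(\sQ,\bd)$-form at $(\bv,\bw)$ for all $\bv,\bw \in Q$. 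I would split each side into three disjoint pieces: (A) contributions involving only vertices from $K \coloneqq I \setminus [0,l]$; (B) ``mixed'' contributions with one index in $K$ and one in $[0,l]$; and (C) ``internal'' contributions (including the diagonal terms) at vertices in $[0,l]$.

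Piece (A) agrees trivially since $\check{d}_i=d_i$, $\check{a}_{ij}=a_{ij}$, and $\check{v}_i=v_i$ for $i,j \in K$. For piece (B), the construction of $\check{\sQ}$ gives $\check{a}_{i'j} = a_{i'0}$ for every $i' \in K$ and every $j \in [0,l]$, the value being independent of $j$; here one uses the irregular-leg hypothesis that $a_{i'j} = 0$ for $j \in [1,l]$ on the $\sQ$-side. Since $\check{d}_j = 1$ on $[0,l]$, summing the $\check{\sQ}$-side mixed terms over $j \in [0,l]$ and invoking the telescoping identity $\sum_{j=0}^{l} \check{v}_j = v_0$ (immediate from $\check{v}_i = v_i - v_{i+1}$ for $i \in [0,l-1]$ and $\check{v}_l = v_l$) recovers precisely the $\sQ$-side mixed contribution $-\sum_{i' \in K} a_{i'0}\,d_{i'}\,(v_{i'} w_0 + v_0 w_{i'})$.

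The combinatorial heart is piece (C). Using $\check{a}_{ij} = d-2$ for every $i \neq j$ in $[0,l]$, $\check{d}_i = 1$ there, and the identity $\sum_{i \neq j}\check{v}_i \check{w}_j = (\sum_i \check{v}_i)(\sum_j \check{w}_j) - \sum_i \check{v}_i \check{w}_i = v_0 w_0 - \sum_i \check{v}_i \check{w}_i$, the $\check{\sQ}$-side internal piece simplifies to $d\sum_{i=0}^l \check{v}_i \check{w}_i - (d-2)\,v_0 w_0$. A direct expansion of $\sum_{i=0}^{l-1}(v_i-v_{i+1})(w_i-w_{i+1}) + v_l w_l$ rewrites this as $2v_0 w_0 + 2d\sum_{i=1}^l v_i w_i - d\sum_{i=0}^{l-1}(v_i w_{i+1} + v_{i+1}w_i)$, which is exactly the $\sQ$-side internal piece. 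There is no genuine obstacle: the work is bookkeeping, principally keeping straight which arrows are added or deleted in the regularization and not confusing $d_{ij} = \gcd(d_i,d_j)$ with the factor $d_i d_j / d_{ij} = \operatorname{lcm}(d_i,d_j)$ that actually enters the form.
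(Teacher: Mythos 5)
Your proposal is correct and follows essentially the same route as the paper: both arguments reduce to a direct verification organized by the decomposition $I=[0,l]\sqcup K$, the paper via explicit block matrices for $\varphi$, $\bD\bC$ and $\check{\bD}\check{\bC}$, you via the equivalent statement that $\varphi$ pulls back the bilinear form of $(\check{\sQ},\check{\bd})$ to that of $(\sQ,\bd)$. Your telescoping identities $\sum_{j=0}^{l}\check{v}_j=v_0$ and the expansion of $\sum_i\check{v}_i\check{w}_i$ in fact supply exactly the ``direct calculation'' that the paper's proof leaves implicit at its final step, and all of your intermediate formulas (in particular $\check{a}_{i'j}=a_{i'0}$ for $i'\in K$, $j\in[0,l]$, and the reduction of the internal piece to $d\sum_i\check{v}_i\check{w}_i-(d-2)v_0w_0$) check out.
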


\begin{proof}
   To prove it, we express the matrices in block form with respect to the decomposition of the index set 
   $I = [0,l]\sqcup K$. First, $\varphi$ is expressed as
   \[
   \varphi= 
     \begin{pmatrix}
      1&-1&0&\cdots&0&0\\
      0&\ddots&\ddots&\ddots&\vdots&\vdots\\
      0&\ddots&\ddots&\ddots&0&\vdots\\
       \vdots&\ddots&\ddots&\ddots&-1&\vdots\\
       0&\cdots&0&0&1&0\\
       0&\cdots&\cdots&0&0&\unit\\
     \end{pmatrix}.
     \]
     By the properties of $\sQ_{\leg}$ and the definition of regularization, 
     the matrices $\bD$, $\check{\bD}$, $\bA'$
     and $\check{\bA'}$ are respectively expressed as
     \[
           \bD =\begin{pmatrix}
                  \diag(1,d,\ldots,d)&0\\
                  0&\widetilde{\bD}\\
           \end{pmatrix},\quad
           \check{\bD}=\begin{pmatrix}
            \unit&0\\
            0&\widetilde{\bD}\\
           \end{pmatrix},
           \]
  \[              \bA'=\begin{pmatrix}
      0&1&0&\cdots&0&^t\mathbf{a}\\
      1&0&\frac{1}{d}&\ddots&\vdots&0\\
      0&\frac{1}{d}&\ddots&\ddots&0&\vdots\\ 
      \vdots&\ddots&\ddots&\ddots&\frac{1}{d}&0\\
      0&\cdots&0&\frac{1}{d}&0&0\\
      \mathbf{a}&0&\cdots&0&0&\widetilde{\bA'}\\
    \end{pmatrix},\quad
       \check{\bA'}=\begin{pmatrix}
      0&d-2&\cdots&d-2&^t\mathbf{a}\\
      d-2&\ddots&\ddots&\vdots&\vdots\\
      \vdots&\ddots&\ddots&d-2&\vdots\\
      d-2&\cdots&d-2&0&^t\mathbf{a}\\
      \mathbf{a}&\cdots&\cdots&\mathbf{a}&\widetilde{\bA'}\\
    \end{pmatrix},
   \] 
   where $\widetilde{\bD}$ (resp.\ $\widetilde{\bA'}$) is  
   the sub-matrix of $\bD$ (resp.\ $\bA'$) 
   obtained by restricting the index set to $K$, 
   and $\mathbf{a}=(a_{k0})_{k\in K}$. 
   Now we check the equality. We have 
   \begin{align*}
       \bD\bC=2\bD -\bD\bA'\bD
       &= \begin{pmatrix}
         2&-d&0&\cdots&0&-^t\mathbf{a}\widetilde{\bD}\\
         -d&2d&\ddots&\ddots&\vdots&0\\
         0&\ddots&\ddots&\ddots&0&\vdots\\
         \vdots&\ddots&\ddots&\ddots&-d&0\\
         0&\cdots&0&-d&2d&0\\
         -\widetilde{\bD}\mathbf{a}&0&\cdots&0&0&\widetilde{\bD}\widetilde{\bC}\\
      \end{pmatrix},
   \end{align*}
  where $\widetilde{\bC}=2\unit-\widetilde{\bA'}\widetilde{\bD}$. 
  On the other hand,
   \begin{align*}
      \check{\bD}\check{\bC}=2\check{\bD}-\check{\bD}\check{\bA'}\check{\bD}
       &=\begin{pmatrix}
         2&2-d&\cdots&2-d&-^t\mathbf{a}\widetilde{\bD}\\
         2-d&\ddots&\ddots&\vdots&\vdots\\
         \vdots&\ddots&\ddots&2-d&-^t\mathbf{a}\widetilde{\bD}\\
         2-d&\cdots&2-d&2&-^t\mathbf{a}\widetilde{\bD}\\
         -\widetilde{\bD}\mathbf{a}&\cdots&-\widetilde{\bD}\mathbf{a}&-\widetilde{\bD}\mathbf{a}&\widetilde{\bD}\widetilde{\bC}\\
       \end{pmatrix}. 
   \end{align*}
   By direct calculation, we obtain 
$^t\varphi\check{\bD}\check{\bC}\varphi
   =\bD\bC$. 
\end{proof}

The above lemma implies that $\varphi$ 
preserves the symmetric bilinear form \eqref{eq:symm}.

Let $S_{l+1}$ be the symmetric group of $[0,l]$. 
It effectively acts on $Q=\Z^I$ by permutations of coordiates.

\begin{lem}\label{lem:perm}
   $\sigma\check{s}_k\sigma^{-1}=\check{s}_{\sigma(k)}$ 
   for any $\sigma \in S_{l+1}$ and $k \in I$.
\end{lem}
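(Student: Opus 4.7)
The plan is to reduce the claim to the fact that the regularization $(\check{\sQ},\check{\bd})$ is manifestly $S_{l+1}$-symmetric, so that $\sigma$ is a diagram automorphism, and then invoke the standard principle that diagram automorphisms conjugate simple reflections via the induced permutation of the vertices.

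First I would establish the $S_{l+1}$-symmetry at the level of data. Every vertex in $[0,l]$ has $\check{d}_i=1$, so $\check{\bD}$ is block-diagonal with identity block on $[0,l]$ and is therefore invariant under permuting $[0,l]$. For the adjacency matrix $\check{\bA}'$, one reads off from the explicit block form given in the proof of the preceding lemma that (i) the $[0,l]\times[0,l]$ block has $0$ on the diagonal and $d-2$ everywhere off the diagonal, and (ii) for each $k\in K$ the entries $\check{a}'_{ki}$, $i\in[0,l]$, are all equal to $a_{k0}$. Both features are precisely the invariance under $S_{l+1}$ acting on the $[0,l]$-indices and trivially on $K$. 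Hence the matrix $P_\sigma\in\GL_\Z(\Z^I)$ representing $\sigma$ satisfies $P_\sigma\check{\bA}'P_\sigma^{-1}=\check{\bA}'$ and $P_\sigma\check{\bD}P_\sigma^{-1}=\check{\bD}$, so
\[
   P_\sigma\,\check{\bC}\,P_\sigma^{-1}=\check{\bC},\qquad\text{equivalently,}\qquad \check{c}_{\sigma(i)\sigma(j)}=\check{c}_{ij}\quad(i,j\in I).
\]

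Next I would compute directly. Since $\sigma$ acts on $\check{Q}=\Z^I$ by $\sigma(\check{\alpha}_j)=\check{\alpha}_{\sigma(j)}$, I evaluate both sides on a basis vector $\check{\alpha}_j$:
\[
   \sigma\check{s}_k\sigma^{-1}(\check{\alpha}_j)
   =\sigma\bigl(\check{\alpha}_{\sigma^{-1}(j)}-\check{c}_{k,\sigma^{-1}(j)}\check{\alpha}_k\bigr)
   =\check{\alpha}_j-\check{c}_{k,\sigma^{-1}(j)}\,\check{\alpha}_{\sigma(k)},
\]
while
\[
   \check{s}_{\sigma(k)}(\check{\alpha}_j)=\check{\alpha}_j-\check{c}_{\sigma(k),j}\,\check{\alpha}_{\sigma(k)}.
\]
By the Cartan-matrix invariance established above, $\check{c}_{\sigma(k),j}=\check{c}_{\sigma(k),\sigma(\sigma^{-1}(j))}=\check{c}_{k,\sigma^{-1}(j)}$, so the two expressions agree. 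Since the $\check{\alpha}_j$ span, this gives $\sigma\check{s}_k\sigma^{-1}=\check{s}_{\sigma(k)}$.

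There is no real obstacle: the content of the lemma is entirely the symmetry of $(\check{\sQ},\check{\bd})$ under $S_{l+1}$, which is visible from the very construction of the regularization (all the new edges among $[0,l]$ are added uniformly with multiplicity $d-2$, and arrows from outside are duplicated uniformly over $[0,l]$). The only minor care needed is to distinguish the two cases $k\in[0,l]$ and $k\in K$, but the calculation above handles both uniformly because $\sigma$ fixes $K$ pointwise so $\sigma(k)=k$ whenever $k\in K$.
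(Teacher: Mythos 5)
Your proof is correct and follows essentially the same route as the paper: both arguments reduce the claim to the $S_{l+1}$-invariance of the Cartan data of $(\check{\sQ},\check{\bd})$ (the paper states the invariance of $\check{\bC}$, $\check{\bD}$ and works with the induced invariance of the bilinear form $(\cdot,\cdot)$, whereas you verify the invariance from the block form of $\check{\bA}'$, $\check{\bD}$ and compute with the Cartan matrix entries directly on the basis $\check{\alpha}_j$), and the two conjugation computations are equivalent.
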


\begin{proof}
   Observe that the matrices $\check{\bC}$, $\check{\bD}$ are invariant under 
   permutations of indices in $[0,l]$. 
   Hence the action of $S_{l+1}$ on $\check{Q}$ 
   preserves the symmetric bilinear form. 
   Recall that the simple reflections satisfy 
   \[
      \check{s}_i (\beta) = \beta - \frac{2(\beta,\check{\alpha}_i)}{(\check{\alpha}_i,\check{\alpha}_i)} \check{\alpha}_i 
      \quad (i \in I,\ \beta \in \check{Q}).
   \]
   For $k \in I$ and $\beta \in \check{Q}$, we thus have 
   \begin{align}
      \check{s}_{\sigma(k)}(\beta) 
      &= \beta - \frac{2(\beta,\check{\alpha}_{\sigma(k)})}{(\check{\alpha}_{\sigma(k)},\check{\alpha}_{\sigma(k)})}\check{\alpha}_{\sigma(k)} \\
      &= \beta - \frac{2(\sigma^{-1}(\beta),\check{\alpha}_k)}{(\check{\alpha}_k,\check{\alpha}_k)}\check{\alpha}_{\sigma(k)} \\ 
      &= \sigma \left( \sigma^{-1}(\beta) - \frac{2(\sigma^{-1}(\beta),\check{\alpha}_k)}{(\check{\alpha}_k,\check{\alpha}_k)}\check{\alpha}_k \right) 
      = \left( \sigma \check{s}_k \sigma^{-1} \right) (\beta).
   \end{align}
\end{proof}

If we regard $W(\check{\bC})$ and $S_{l+1}$ as subgroups of $\GL_\Z(\check{Q})$, 
then Lemma \ref{lem:perm} and the following lemma imply that $W(\check{\bC}) S_{l+1}$ is  
a semi-direct product $W(\check{\bC})\rtimes S_{l+1}$.

\begin{lem}
   $W(\check{\bC})\cap S_{l+1}=\{\unit_{\check{Q}} \}$.
\end{lem}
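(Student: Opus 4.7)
The plan is to exploit the fact that elements of $S_{l+1}$ permute the simple roots of $\check{\bC}$, combined with the standard characterization of the identity element in a Kac--Moody Weyl group via its action on simple roots.

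First, I would observe that for any $\sigma \in S_{l+1}$ and any $i \in I$, we have
\[
   \sigma(\check{\alpha}_i) =
   \begin{cases}
      \check{\alpha}_{\sigma(i)} & (i \in [0,l]), \\
      \check{\alpha}_i & (i \in K),
   \end{cases}
\]
since $S_{l+1}$ acts on $\check{Q}=\Z^I$ by permuting only the coordinates indexed by $[0,l]$. In either case, $\sigma(\check{\alpha}_i)$ is a simple root of $\check{\bC}$, hence in particular a positive root.

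Next, I would invoke the standard fact from the theory of Kac--Moody Weyl groups (see e.g.\ \cite[Lemma~3.11]{Kac} or the exchange property for Coxeter groups): an element $w \in W(\check{\bC})$ equals $\unit_{\check{Q}}$ if and only if $w(\check{\alpha}_i)$ is a positive root for every $i \in I$. This follows from the characterization $\ell(w s_i) < \ell(w) \Leftrightarrow w(\check{\alpha}_i) \in -\check{\Delta}_+$, so a non-identity $w$ always has at least one simple root sent to a negative root.

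Taking $\sigma \in W(\check{\bC})\cap S_{l+1}$, the first observation shows $\sigma$ sends every simple root to a positive root, and the second then forces $\sigma = \unit_{\check{Q}}$. There is no real obstacle here; the only subtlety is making sure to cite the correct Kac--Moody-level statement rather than the finite-type version, but in our setting $\check{\bC}$ is a symmetrizable generalized Cartan matrix and the result from \cite{Kac} applies verbatim.
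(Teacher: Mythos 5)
Your argument is correct and is essentially the same as the paper's: both note that any $\sigma \in S_{l+1}$ permutes the simple roots of $\check{\bC}$ (hence sends every simple root to a positive root) and then invoke \cite[Lemma~3.11]{Kac}, by which a non-identity Weyl group element must send some simple root to a negative root. No gaps.
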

\begin{proof}
  Take $\sigma \in W(\check{\bC})\cap S_{l+1}$. 
  If $\sigma$ is not the identity, then 
  it sends some simple root to a negative root 
  by \cite[Lemma~3.11, (b)]{Kac}. 
  On the other hand, we have 
  $\sigma(\check{\alpha_i})=\check{\alpha}_{\sigma(i)}$ for any $i\in I$, 
  which is a contradiction.
\end{proof}

\begin{prop}
Under the isomorphism $\varphi$, 
the Weyl group $W(\bC)$ is isomorphic to the semidirect product $W(\check{\bC})\rtimes S_{l+1}$.
\end{prop}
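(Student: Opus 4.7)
The plan is to show the equality $\varphi W(\bC) \varphi^{-1} = W(\check{\bC}) \cdot S_{l+1}$ as subgroups of $\GL_\Z(\check{Q})$ by computing the $\varphi$-conjugate of each simple reflection $s_i \in W(\bC)$, and then combining this with the two preceding lemmas. Since the matrix of $\varphi$ (written down in the proof of the lemma at the beginning of this subsection) is upper unitriangular, $\varphi$ is a $\Z$-module automorphism of $\Z^I = \check{Q}$, so the conjugation makes sense.

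The computation would proceed case by case, setting $\bu \coloneqq \varphi^{-1}(\bw)$ so that $u_j = w_j + w_{j+1} + \cdots + w_l$ for $j \in [0,l]$ and $u_j = w_j$ otherwise. For $k \in K \coloneqq I \setminus [0,l]$, since $\varphi$ is the identity on $K$-coordinates and $\check{c}_{k,j} = -a_{k,0}$ is independent of $j \in [0,l]$, the sum $\sum_{j \in [0,l]} \check{c}_{k,j} w_j$ collapses to $c_{k,0}\, u_0$, yielding $\varphi s_k \varphi^{-1} = \check{s}_k$. For $k = 0$, a similar telescoping (using $\check{c}_{0,j} = -(d-2)$ for $j \in [1,l]$ to match the single contribution $c_{0,1} = -d$ in $\bC$ together with $c_{0,k} = \check{c}_{0,k}$ for $k \in K$) gives $\varphi s_0 \varphi^{-1} = \check{s}_0$. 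The key step is the case $i \in [1,l]$: the coefficient of $\alpha_i$ in $s_i(\bu) - \bu$ is $-2u_i + u_{i-1} + u_{i+1} = w_{i-1} - w_i$ (with the convention $u_{l+1} = 0$), and then applying $\varphi$ produces the vector obtained from $\bw$ by interchanging the coordinates of index $i-1$ and $i$; that is, $\varphi s_i \varphi^{-1} = (i-1, i) \in S_{l+1}$.

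Since the simple transpositions $(i-1,i)$ for $i \in [1,l]$ generate $S_{l+1}$, one gets $\varphi W(\bC) \varphi^{-1} \supseteq S_{l+1}$, and Lemma~\ref{lem:perm} then lets us conjugate $\check{s}_0$ by elements of $S_{l+1}$ to produce every $\check{s}_j$ for $j \in [0,l]$. Together with $\check{s}_k = \varphi s_k \varphi^{-1}$ for $k \in K$, this yields $\varphi W(\bC) \varphi^{-1} \supseteq W(\check{\bC}) \cdot S_{l+1}$; the reverse inclusion is immediate from the case-by-case computation since every generator lies in $W(\check{\bC}) \cup S_{l+1}$. Combining with the preceding lemma $W(\check{\bC}) \cap S_{l+1} = \{\unit_{\check{Q}}\}$ and Lemma~\ref{lem:perm} (which shows $S_{l+1}$ normalizes $W(\check{\bC})$) identifies this product as the semidirect product, completing the proof. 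The main obstacle is the key computation $\varphi s_i \varphi^{-1} = (i-1,i)$ for $i \in [1,l]$: the structural insight that the simple reflections along the irregular leg are conjugated by $\varphi$ into simple transpositions of $S_{l+1}$ rather than into reflections of $W(\check{\bC})$ is precisely what produces the $S_{l+1}$ factor in the semidirect-product decomposition.
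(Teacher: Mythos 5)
Your proposal is correct and follows essentially the same route as the paper: compute $\varphi s_i \varphi^{-1}$ generator by generator, find that the leg reflections $s_i$ ($i \in [1,l]$) become the transpositions $(i-1,i)$ while the remaining $s_k$ become $\check{s}_k$, recover the missing $\check{s}_j$ ($j \in [1,l]$) by conjugating $\check{s}_0$ with permutations via Lemma~\ref{lem:perm}, and conclude with the trivial-intersection lemma. The only cosmetic difference is that you verify the conjugation formulas by direct coordinate manipulation, whereas the paper exploits the fact that $\varphi$ preserves the bilinear form so that $\varphi s_i \varphi^{-1}$ is the reflection in $\varphi(\alpha_i)$; both computations yield the same identifications.
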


\begin{proof}
   We calculate the subgroup 
   $\varphi W(\bC) \varphi^{-1} \subset \GL_\Z(\check{Q})$.
   Since $\varphi$ preserves the symmetric bilinear form,
   the automorphism $\varphi s_i \varphi^{-1}$ of $\check{Q}$ 
   satisfies 
   \[
      \varphi s_i \varphi^{-1}(\beta) 
      =\beta-\frac{2\left(\beta,\varphi(\alpha_i)\right)}{\left(\varphi (\alpha_i), \varphi (\alpha_i)\right)} \varphi (\alpha_i)
      \quad(i\in I,\ \beta \in \check{Q}).
   \]
   By the defintition of $\varphi$, we have
   \[
     \varphi (\alpha_i)=\begin{cases}
      \check{\alpha}_i-
      \check{\alpha}_{i-1}&(i\in[1,l]),\\
      \check{\alpha}_i & (\text{otherwise}).\end{cases} 
   \]
   It follows that $\varphi s_i \varphi^{-1} = \check{s}_i$ if $i \not\in [1,l]$. 
   For $i \in [1,l]$, a direct calculation shows 
   \[
      \left(\varphi (\alpha_i), \varphi (\alpha_i)\right) = (\alpha_i,\alpha_i) = 2d, \quad 
      (\check{\alpha}_k,\check{\alpha}_i-\check{\alpha}_{i-1})=
      \begin{cases}
         d & (k=i), \\
         -d & (k=i-1), \\
         0 & (\text{otherwise}),
      \end{cases}
   \]
   which imply that
   $\varphi s_i \varphi^{-1}(\check{\alpha}_{k})=\check{\alpha}_{\sigma_{i}(k)}$ 
   for all $k \in I$,
   where $\sigma_{i} \in S_{l+1}$ is the transposition of $i-1$ and $i$.
   Hence $\varphi s_i \varphi^{-1} = \sigma_i$.
   As a conclusion, $\varphi W(\bC) \varphi^{-1}$ is equal to the subgroup 
   generated by 
   $\sigma_i$, $i \in [1,l]$ and $\check{s}_k$, $k \not\in [1,l]$. 
   On the other hand, Lemma~\ref{lem:perm} implies that for any $i \in [1,l]$, $\check{s}_i$ is 
   conjugate to $\check{s}_0$ via the permutation $\sigma_i \cdots \sigma_2 \sigma_1$.
   Thus $\varphi W(\bC) \varphi^{-1}$ coincides with $W(\check{\bC}) S_{l+1} \simeq W(\check{\bC})\rtimes S_{l+1}$.
\end{proof}

Let $S_{l+1}$ act on $R_{\check{\bd}}$ by permutations of components.
Then it is straightforward to show (using the $S_{l+1}$-invariance of $\check{\bD}, \check{\bC}$) 
that $\sigma \check{r}_k \sigma^{-1} = \check{r}_{\sigma(k)}$ for any $\sigma \in S_{l+1}$ and $k \in I$, 
where $\check{r}_k \colon R_{\check{\bd}} \to R_{\check{\bd}}$ is the linear map 
corresponding to the simple reflection $\check{s}_k$ for the action of $W(\check{\bC})$.
Thus we obtain an action of the semi-direct product $W(\check{\bC}) \rtimes S_{l+1}$ on $R_{\check{\bd}}$.

\begin{prop}
   Let $W(\bC)$ act on $R_{\check{\bd}}\times \check{Q}$ 
   through the isomorphism $W(\bC) \simeq W(\check{\bC})\rtimes S_{l+1}$. 
   Then the map $R_{\bd}\times Q\to R_{\check{\bd}}\times \check{Q}$, $(\bmlam,\bv)\mapsto (\check{\bmlam},\check{\bv})$ is $W(\bC)$-equivariant,
   where $\check{\bmlam}$ is defined in \eqref{eq:lambda}.
\end{prop}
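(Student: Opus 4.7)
The plan is to reduce to checking each simple reflection $s_i$ of $W(\bC)$ on each of the two components of $R_{\check{\bd}}\times\check{Q}$ separately. The $\check{Q}$-component will be immediate: since $\bv\mapsto\check{\bv}$ is, by construction, the map $\varphi$ that identifies $W(\bC)$ with $W(\check{\bC})\rtimes S_{l+1}$ in the previous proposition, the identity $\varphi(s_i(\bv))=(\varphi s_i\varphi^{-1})(\varphi(\bv))$ is exactly the equivariance of $\varphi$. The whole content therefore lies in the first component.

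From the proof of the previous proposition, the conjugate $\varphi s_i\varphi^{-1}$ equals $\check{s}_i$ when $i\notin[1,l]$ and equals the transposition $\sigma_i\in S_{l+1}$ of $i-1$ and $i$ when $i\in[1,l]$. I would therefore split into two cases:
\begin{enumerate}
   \item[(a)] for $i\notin[1,l]$, show that $\bmlam\mapsto\check{\bmlam}$ sends $r_i(\bmlam)$ to $\check{r}_i(\check{\bmlam})$;
   \item[(b)] for $i\in[1,l]$, show that it sends $r_i(\bmlam)$ to $\sigma_i(\check{\bmlam})$, where $\sigma_i$ simply swaps the components $\check{\lambda}_{i-1}$ and $\check{\lambda}_i$ of $\check{\bmlam}$.
\end{enumerate}

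For case (b), I would exploit that the computation is local to the leg: the $\sQ$-neighbors of $i\in[1,l]$ are only $i-1$ and (if $i<l$) also $i+1$, each joined by a single arrow with $c_{i,i\pm 1}=-1$ and $d_{i,i\pm 1}=d$. Hence $r_i$ acts by $\lambda_{i\pm 1}\mapsto\lambda_{i\pm 1}+\lambda_i$, $\lambda_i\mapsto -\lambda_i$, and the identity on all other components. Plugging this into \eqref{eq:lambda} and summing the $(d-1)$-coefficients, the result should telescope so as to exchange $\check{\lambda}_{i-1}$ with $\check{\lambda}_i$ and leave every other $\check{\lambda}_k$ unchanged. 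Case (a) with $i\in K\coloneqq J\setminus\{0\}$ should likewise be straightforward: $i$ has no $\sQ$-neighbor in $[1,l]$, so $r_i(\bmlam)_j=\lambda_j$ for $j\in[1,l]$, and the shift that $r_i$ contributes at vertex $0$ of $\sQ$ (via $c_{i,0}=-a_{i,0}$) will be exactly replicated on the $\check{\sQ}$-side by the $a_{i,0}$ arrows from $i$ to each $k\in[0,l]$ added in the regularization.

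The hard part will be case (a) with $i=0$, because on the $\sQ$-side the coefficient $c_{0,1}=-d$ injects a shift $d\lambda_0\epsilon_1^{d-1}$ into $r_0(\bmlam)_1$, whereas on the $\check{\sQ}$-side the regularization adds only $d-2$ arrows between $0$ and each $k\in[1,l]$, giving $\check{c}_{0,k}=2-d$. The two shifts do not obviously match, but I expect the discrepancy to be absorbed precisely by the term $-\lambda_0$ in $r_0(\bmlam)_0$, which feeds into every $\check{\lambda}_m$ through the summation in \eqref{eq:lambda}. Once expanded out, the required identity on the $m$-th component should reduce to an elementary consequence of the definition of $\check{\bmlam}$. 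The residual consistency checks at components $k\in K$ and at $k=0$ are routine.
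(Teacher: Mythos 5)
Your proposal is correct, and every step you flag does go through; the difference from the paper is that you run the computation on the opposite side of a duality. The paper's proof introduces the transpose ${}^t\psi\colon R_{\check{\bd}}\to R_{\bd}$ of $\psi\colon\bmlam\mapsto\check{\bmlam}$ (explicitly $\kappa_i=\sum_{k=i}^l\check{\kappa}_k$ for $i\in[0,l]$, $\kappa_i=\check{\kappa}_i$ otherwise) and reduces the claim to equivariance of ${}^t\psi$ with respect to the dual actions $\tilde{s}_i$, $\tilde{s}'_i$, $\sigma_i$. This buys uniformity: the dual reflection $\tilde{s}_i$ was already computed in the proof of Proposition~\ref{prop:reflection} and modifies only the $i$-th component, so each case collapses to one identity between two sums over the neighbours of $i$; your primal $r_i$ modifies all components adjacent to $i$, so your check spreads over several components and needs the telescoping you describe. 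The case division itself ($i\in[1,l]$ yielding the transposition $\sigma_i$; $i\in K$ routine; $i=0$ the delicate one) is identical in both arguments, and your prediction for $i=0$ is exactly what happens: $\psi(r_0(\bmlam))_m=-\lambda_0+(\lambda_{1,d-1}+d\lambda_0)+\sum_{j=2}^m\lambda_{j,d-1}=\check{\lambda}_m+(d-2)\check{\lambda}_0=\check{r}_0(\check{\bmlam})_m$ for $m\in[1,l]$, the sign flip of $\lambda_0$ converting the coefficient $d$ into $\,-\check{c}_{0m}=d-2$. One small imprecision to repair in your case (b): for $i=1$ the neighbour $0$ has $d_{1,0}=\gcd(d,1)=1$, not $d$, so $r_1$ sends $\lambda_0$ to $\lambda_0+\lambda_{1,d-1}$ rather than ``$\lambda_0+\lambda_1$''; with that reading (which your remark about summing $(d-1)$-coefficients already anticipates) the telescoping produces exactly the exchange of $\check{\lambda}_0$ and $\check{\lambda}_1$, and for $i\geq 2$ the exchange of $\check{\lambda}_{i-1}$ and $\check{\lambda}_i$.
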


\begin{proof}
   Let $\psi \colon R_\bd \to R_{\check{\bd}}$ be the map $\bmlam \mapsto \check{\bmlam}$. 
   Then the transpose ${}^t \psi \colon R_{\check{\bd}} \to R_\bd$
   \[
      \check{\bmkap}=(\check{\kappa_i}) \mapsto \bmkap =(\kappa_i), \quad 
      \kappa_i = 
      \begin{cases}
         \check{\kappa}_i & (i \not\in [0,l]), \\
         \sum_{k=i}^l \check{\kappa}_k & (i \in [0,l]).
      \end{cases} 
   \]
   For the assertion it is sufficient to show that ${}^t \psi$ is equivariant with respect to the dual actions.
   For $i \in I$, let $\tilde{s}_i \colon R_\bd \to R_\bd$, $\tilde{s}'_i \colon R_{\check{\bd}} \to R_{\check{\bd}}$ be 
   the dual actions of the $i$-th simple reflection. 
   In the proof of the above lemma we checked that 
   \[
      \varphi s_i \varphi^{-1} = 
      \begin{cases}
         s_i & (i \not\in [1,l]), \\
         \sigma_i & (i \in [1,l]).
      \end{cases}
   \]
   Thus it is sufficient to show that 
   \[
      \tilde{s}_i ({}^t \psi (\check{\bmkap})) = 
      \begin{cases}
         {}^t \psi (\sigma_i (\check{\bmkap})) & (i \in [1,l]), \\
         {}^t \psi (\tilde{s}'_i (\check{\bmkap})) & (i \not\in [1,l])  
      \end{cases}
   \]
   for any $\check{\bmkap} \in R_{\check{\bd}}$.

   Fix $\check{\bmkap} \in R_{\check{\bd}}$ and put  
   $\bmkap = {}^t \psi (\check{\bmkap})$.
   First, suppose $i \in [1,l]$. 
   In the proof of Proposition~\ref{prop:reflection}, 
   we already calculated $\tilde{s}_i(\bmkap)$ as follows:  
   \[
      \tilde{s}_i(\bmkap) = \bmkap - \sum_{j \in I} c_{ij} \sum_{m=0}^{d_{ij}-1} \kappa_{j,f_{ij}m} \epsilon_{d_i}^{f_{ji}m} \bfe_i,
      \quad \bmkap =\left( \sum \kappa_{i,k} \epsilon_{d_i}^k \right).
   \]
   Since $i \in [1,l]$, we have  
   \[
      c_{ij} = 
      \begin{cases}
         2 & (j=i), \\
         -1 & (j \in [0,l],\ \lvert i-j \rvert =1), \\         
         0 & (\text{otherwise}),
      \end{cases}
   \]
   and 
   \[
      \sum_{m=0}^{d_{ij}-1} \kappa_{j,f_{ij}m} \epsilon_{d_i}^{f_{ji}m} 
      = \kappa_j 
   \]
   whenever $c_{ij} \neq 0$. Thus 
   \[
      \sum_{j \in I} c_{ij} \sum_{m=0}^{d_{ij}-1} \kappa_{j,f_{ij}m} \epsilon_{d_i}^{f_{ji}m} 
      = 2\kappa_i - \sum_{j \in [0,l];\, \lvert i-j \rvert = 1} \kappa_j = \check{\kappa}_i - \check{\kappa}_{i-1},
   \]
   and hence $\tilde{s}_i(\bmkap) = \bmkap - (\check{\kappa}_i - \check{\kappa}_{i-1}) \bfe_i$. 
   On the other hand, a direct calculation shows that 
   the $i$-th component of ${}^t \psi(\sigma_i(\check{\bmkap}))$ is equal to 
   \[
      \sum_{k=i}^l \check{\kappa}_{\sigma_i^{-1}(k)} 
      = \check{\kappa}_{i-1} + \sum_{k=i+1}^l \check{\kappa}_k 
      = \kappa_i - (\check{\kappa}_i - \check{\kappa}_{i-1}),
   \]   
   while the other components are the same as those of $\bmkap$. 
   Hence ${}^t \psi(\sigma_i(\check{\bmkap})) = \bmkap - (\check{\kappa}_i - \check{\kappa}_{i-1}) \bfe_i = \tilde{s}_i(\bmkap)$.

   Next, suppose $i \not\in [1,l]$. 
   For $j \in I$, let $\check{c}_{ij}$ be the $(i,j)$-entry of $\check{\bC}$ and 
   \[
      \check{d}_{ij} = \gcd (\check{d}_i,\check{d}_j), \quad 
      \check{f}_{ij} = \check{d}_j /\check{d}_{ij}.
   \] 
   Then we have 
   \begin{align}
      {}^t \psi (\tilde{s}'_i(\check{\bmkap})) 
      &= {}^t \psi \left( \check{\bmkap} - \sum_{j \in I} \check{c}_{ij} \sum_{m=0}^{\check{d}_{ij}-1} \check{\kappa}_{j,\check{f}_{ij}m} \epsilon_{\check{d}_i}^{\check{f}_{ji}m} \bfe_i \right) \\
      &= \bmkap - \sum_{j \in I} \check{c}_{ij} {}^t \psi \left( \sum_{m=0}^{\check{d}_{ij}-1} \check{\kappa}_{j,\check{f}_{ij}m} \epsilon_{\check{d}_i}^{\check{f}_{ji}m} \bfe_i \right).
   \end{align}
   Since $i \not\in [1,l]$, the description of ${}^t \psi$ shows 
   \[
      {}^t \psi \left( \sum_{m=0}^{\check{d}_{ij}-1} \check{\kappa}_{j,\check{f}_{ij}m} \epsilon_{\check{d}_i}^{\check{f}_{ji}m} \bfe_i \right)
      = \sum_{m=0}^{\check{d}_{ij}-1} \check{\kappa}_{j,\check{f}_{ij}m} \epsilon_{\check{d}_i}^{\check{f}_{ji}m} \bfe_i.
   \]
   On the other hand, 
   \[
      \tilde{s}_i (\bmkap) = \bmkap - \sum_{j \in I} c_{ij} \sum_{m=0}^{d_{ij}-1} \kappa_{j,f_{ij}m} \epsilon_{d_i}^{f_{ji}m} \bfe_i.
   \]
   Therefore it is sufficient to show 
   \[
      \sum_{j \in I} \check{c}_{ij} \sum_{m=0}^{\check{d}_{ij}-1} \check{\kappa}_{j,\check{f}_{ij}m} \epsilon_{\check{d}_i}^{\check{f}_{ji}m} 
      = \sum_{j \in I} c_{ij} \sum_{m=0}^{d_{ij}-1} \kappa_{j,f_{ij}m} \epsilon_{d_i}^{f_{ji}m}.
   \]
   If $i \neq 0$, then 
   \[
      \sum_{j \in [0,l]} \check{c}_{ij} \sum_{m=0}^{\check{d}_{ij}-1} \check{\kappa}_{j,\check{f}_{ij}m} \epsilon_{\check{d}_i}^{\check{f}_{ji}m}
      = \check{c}_{i0} \sum_{j \in [0,l]} \check{\kappa}_j = c_{i0} \kappa_0,
   \]
   and hence 
   \begin{align}
      \sum_{j \in I} \check{c}_{ij} \sum_{m=0}^{\check{d}_{ij}-1} \check{\kappa}_{j,\check{f}_{ij}m} \epsilon_{\check{d}_i}^{\check{f}_{ji}m} 
      &= \sum_{j \not\in [0,l]} \check{c}_{ij} \sum_{m=0}^{\check{d}_{ij}-1} \check{\kappa}_{j,\check{f}_{ij}m} \epsilon_{\check{d}_i}^{\check{f}_{ji}m} + c_{i0} \kappa_0 \\ 
      &= \sum_{j \not\in [1,l]} c_{ij} \sum_{m=0}^{d_{ij}-1} \kappa_{j,f_{ij}m} \epsilon_{d_i}^{f_{ji}m} \\
      &= \sum_{j \in I} c_{ij} \sum_{m=0}^{d_{ij}-1} \kappa_{j,f_{ij}m} \epsilon_{d_i}^{f_{ji}m}.
   \end{align}
   If $i=0$, then  
   \begin{align}
      \sum_{j \in [0,l]} \check{c}_{ij} \sum_{m=0}^{\check{d}_{ij}-1} \check{\kappa}_{j,\check{f}_{ij}m} \epsilon_{\check{d}_i}^{\check{f}_{ji}m}
      &= 2\check{\kappa}_0 + (2-d) \sum_{j \in [1,l]} \check{\kappa}_j \\ 
      &= 2\kappa_0 -d \kappa_1 = \sum_{j \in [0,l]} c_{ij} \sum_{m=0}^{d_{ij}-1} \kappa_{j,f_{ij}m} \epsilon_{d_i}^{f_{ji}m},
   \end{align}
   while 
   \[
      \sum_{j \in I} \check{c}_{ij} \sum_{m=0}^{\check{d}_{ij}-1} \check{\kappa}_{j,\check{f}_{ij}m} \epsilon_{\check{d}_i}^{\check{f}_{ji}m} 
      = \sum_{j \in I} c_{ij} \sum_{m=0}^{d_{ij}-1} \kappa_{j,f_{ij}m} \epsilon_{d_i}^{f_{ji}m}.
   \]
   Thus we obtain the desired equality.
\end{proof}

\subsection*{Acknowledgements.} 
We are grateful to Tamas Hausel for kindly answering some questions on his work with M.~L.~Wong and D.~Wyss, 
and Yoshiyuki Kimura for valuable comments.   
\subsection*{Data Availability}
No datasets were generated or analysed during the current study.
\subsection*{Funding}
The second named author was supported by JSPS KAKENHI Grant Number 18K03256.
\subsection*{Declarations}
\textbf{Competing interest} The authors declare no competing interests. 
\bibliographystyle{plain}

\end{document}